\newcommand{\R}{\mathbb{R}}
\newcommand{\N}{\mathbb{N}}
\newcommand{\U}{\mathcal{U}}
\newcommand{\cone}{\mathrm{cone}}
\newcommand{\co}{\mathrm{conv}}
\newcommand{\sing}{\mathrm{sing}}
\newcommand{\adhco}{\overline{\mathrm{conv}}}
\newcommand{\tar}{y_f}
\newcommand{\adjtar}{p_f}
\newcommand{\UU}{\mathbf{U}}
\newcommand{\scl}[2]{\left\langle #1, #2\right\rangle}
\newcommand{\ext}{\mathrm{ext}}
\newcommand{\dom}{\mathrm{dom}}
\newcommand{\ie}{\textit{i.e., }}
\DeclareMathOperator*{\argmax}{arg\,max}
\newcommand {\e}  {\varepsilon}
\newcommand{\vertiii}[1]{{\vert\kern-0.25ex\vert\kern-0.25ex\vert #1 
    \vert\kern-0.25ex\vert\kern-0.25ex\vert}}
\newtheorem{prpstn}{Proposition}[section]
\newtheorem{lmm}{Lemma}[section]
\newtheorem{thrm}{Theorem}[section]
\newtheorem{crllr}{Corollary}[section]
\newtheorem{rmrk}{Remark}[section]
\title{Constructive reachability for linear control problems under conic constraints}
\author[a]{Camille Pouchol}
\author[b,d]{Emmanuel Trélat}
\author[c]{Christophe Zhang}
\affil[a]{Laboratoire MAP5 UMR 8145,
Université Paris Cité, 75006 Paris, France. Email address: camille.pouchol@u-paris.fr}
\affil[b]{Sorbonne Universit\'{e}, Universit\'{e} de Paris, CNRS, Laboratoire Jacques-Louis Lions, 75005 Paris, France. Email address: emmanuel.trelat@sorbonne-universite.fr}
\affil[c]{Université de Lorraine, CNRS, Inria, IECL, F-54000 Nancy, France. Email address: christophe.zhang@polytechnique.org}
\affil[d]{CAGE, INRIA, Paris, France.}
\date{\empty}
\begin{document}
\maketitle

\begin{abstract}

Motivated by applications requiring sparse or nonnegative controls, we investigate reachability properties of linear infinite-dimensional control problems under conic constraints. 
Relaxing the problem to convex constraints if the initial cone is not already convex, we provide a constructive approach based on minimising a properly defined dual functional, which covers both the approximate and exact reachability problems. Our main results heavily rely on convex analysis, Fenchel duality and the Fenchel-Rockafellar theorem. 
As a byproduct, we uncover new sufficient conditions for approximate and exact reachability under convex conic constraints. We also prove that these conditions are in fact necessary.
When the constraints are nonconvex, our method leads to sufficient conditions ensuring that the constructed controls fulfill the original constraints, which is in the flavour of bang-bang type properties.
We show that our approach encompasses and generalises several works, and we obtain new results for different types of conic constraints and control systems.

\end{abstract}

\tableofcontents

\section{Introduction}
\subsection{Control problem and motivations}
We let $X$ and $U$ be two Hilbert spaces, $T>0$ be a final time, and denote $E := L^2(0,T;U)$.% When there is no ambiguity, we shall drop the index $T$ and use the notation $E$ rather than $E_T$.

We are given an unbounded operator $A : \mathcal{D}(A) \rightarrow X$ generating a $C_0$ semigroup on $X$, denoted $(S_t)_{t \geq 0}$, and $B \in L(U,X)$ a control operator. We then consider the linear control problem
\begin{equation}
\label{control}
    \dot y = Ay + Bu.
\end{equation}% \quad y(0) = y_0,\]
%where $y_0 \in X$.

For a constraint set $P \subset U$, a given initial condition $y_0\in X$ and a target $\tar\in X$, we say that 
\begin{itemize}
\item 
\textit{$\tar$ is approximately reachable from $y_0$ in time $T$
%is approximately controllable from $y_0$ to $\tar$ in time $T>0$ 
under the constraints $P$}
if for all $\e>0$, there exists $u_\e \in E$ such that for a.e. $t \in (0,T)$, $u_\e(t) \in P$ and the corresponding solution to~\eqref{control} with $y(0) = y_0$ and control $u_\e$ satisfies $\|y(T) - \tar\|_X \leq \e$. 
\item \textit{$\tar$ is exactly reachable from $y_0$ in time $T$
%is approximately controllable from $y_0$ to $\tar$ in time $T>0$ 
under the constraints $P$} if
there exists $u \in E$ such that for a.e. $t \in (0,T)$, $u(t) \in P$ and the corresponding solution to~\eqref{control} with $y(0) = y_0$ and control $u$ satisfies $y(T) = \tar$.
\end{itemize} 
If there are no constraints, \textit{i.e.} $P = U$, we will simply write that $\tar$ is approximately (resp. exactly) reachable from $y_0$ in time $T$.

We are interested in the (constrained) reachability problem when the constraint set $P \subset U$ is a \textbf{cone}, which we will call the constraint cone. In this unbounded setting, it is relevant to distinguish between approximate and exact reachability as these two notions need not coincide, even when $P$ is closed and in finite dimension. An example of this phenomenon in dimension $2$ is provided in Appendix~\ref{app-counterexample}.

More precisely, we aim at 
\begin{itemize}
\item deriving \textbf{necessary and sufficient conditions} for approximate and exact reachability,
\item
developing \textbf{constructive} approaches for the design of controls achieving reachability.
\end{itemize}

The motivation for unbounded conic constraints mainly comes from two main types of constraints, both of interest for applications.
The first type is that of~\textbf{nonnegative} constraints, when $X$ is a finite-dimensional space or a functional space (such as $L^2$). These are convex conic constraints.
The second type is concerned with \textbf{sparsity} constraints. Roughly speaking, these require that, at all times, only a few controls be active. These constraints are not convex and hence prove to be more challenging. 

Moreover, as we work with a fixed control time $T$, it is more relevant to consider unbounded constraint sets such as cones, on which optimal control problems with natural quadratic costs can be formulated. On the other hand, bounded constraint sets appear more naturally in minimal time control problems. 

\subsection{State of the art}

\paragraph{Unconstrained reachability and controllability.}
The derivation of necessary and sufficient conditions for (unconstrained) reachability associated to linear problems can be traced back to the works of Kalman~\cite{kalman1960}, with a focus on controllability, i.e., reachability results that are independent of the initial state $y_0$ and the target $\tar$ (and, possibly, of the time $T>0$). 

Since then, many such controllability conditions that properly generalise to infinite-dimensional settings have been developed, such as Hautus-type conditions \cite{hautus1970stabilization, fattorini1966complete}, unique continuation properties or observability inequalities \cite{tucsnak2009observation, CoronBook}.

In terms of constructive approaches, which for fixed values of $y_0, \tar, T$ should provide a control achieving the target, the so-called Hilbert Uniqueness Method (HUM) developed by Lions has become the method of choice~\cite{lions-1988, Lions-1992}. It is based on minimising a suitably defined functional, and its properties are intimately related to observability inequalities. Lions' variational technique has in turn inspired works that propose ad hoc functionals for specific constrained control problems~\cite{zuazua2010switching, Kunisch-Wang-2013, Ervedoza_2020}, or develop sound discretisation methods to derive the corresponding optimal control~\cite{Boyer2013}.

%discuss numerical methods and the effect of discretisation BLABLAinvestigate controllability through the lens of such dual methods, 

\paragraph{Reachability and controllability under constraints.}

The problem of constrained control of finite-dimensional linear autonomous systems of the form $\dot{x}=Ax+Bu$ has extensively been studied. The seminal paper \cite{brammer1972controllability} provides a general spectral condition on the pair $(A,B)$ to ensure constrained null-controllability, under the hypothesis that this pair is controllable. More recently, the article \cite{Loheac2021} studied the controllability of linear autonomous systems with positive controls, under the assumption that the Kalman controllability condition holds. In this controllable framework, the authors show that the positivity constraint induces positive minimal control times, and obtain constructive controls through a variational approach.

In infinite dimension, there is no equivalent for the Kalman controllability criterion, and other approaches have been developed to study constrained control problems. The author of \cite{Berrahmoune2014} develops a variational method which yields necessary and sufficient observability-type conditions for the constrained exact controllability of autonomous linear systems in Hilbert spaces.

Null-controllability conditions for bounded control sets were established in \cite{ahmed_finite-time_1985} and recovered in \cite{Berrahmoune2019}, with a focus on conservative systems. For the particular case of controls lying in balls, and focusing on parabolic type equations, \cite{berrahmoune_variational_2020} gives necessary and sufficient conditions for null-controllability. 

In all of the above works, the authors develop a variational approach akin to ours, relying on duality in the convex optimisation sense, and obtain constructive controls steering the system to the desired target states. However, it is concerned with general (null) controllability, which leads to strong conditions.

It is worth stressing that, in the presence of constraints, the right notion of controllability becomes unclear. Nonnegative constraints typically may lead to obstructions to controllability: for instance, one has $\tar \geq S_T y_0$ for the heat equation with internal nonnegative constraints, by the parabolic comparison principle \cite{Loheac2017, Nous}. Hence, deriving controllability results (i.e., uniform results with respect to $y_0$, $\tar$ or both) typically leads to restricting the notion of controllability to a well-defined subset of initial and final targets, as is done in~\cite{Nous}.

For general constraints and without specific structural assumptions about the control, the notion of reachability hence becomes more flexible and natural.

\paragraph{Reachability and controllability with conic constraints.}
More recently, unbounded constrained controllability or reachability has been the subject of revived interest, motivated by applications where the control should be nonnegative, or more generally, when the constraints on the control are unilateral~\cite{Loheac2017, Loheac2021}. 

%Of particular interest are the results given \cite{Loheac2021} for the heat equation...

Another line of research is that of sparse controls \cite{zuazua2010switching, Nous}. The former article~\cite{zuazua2010switching} is focused on approximate and exact reachability for finite-dimensional problems with $m$ controls, with the constraint that at all times, only one control should active. The latter article~\cite{Nous} is concerned with parabolic equations with internal nonnegative controls, and a specific sparsity constraint.

These two works rely on the analysis of a properly defined optimisation problem, through a fine study of a corresponding Fenchel dual problem, in the spirit of the HUM method. 
Both works are also based on the idea of relaxation which consists of two steps. First, one derives controls within the set of relaxed constraints (obtained by computing the closed convex hull of constraints). Second, one establishes a bang-bang type property that the obtained controls actually take values in the original constraint set.

\paragraph{Main contribution.} 
The above literature lacks a general framework to investigate constructive reachability in the relevant setting of conic constraints.

Our approach bridges this gap; it subsumes the two works~\cite{zuazua2010switching, Nous} as well as the HUM method, by providing a general recipe for constructive reachability under conic constraints. It accommodates both approximate and exact reachability, thereby yielding sufficient and necessary conditions in the case of convex constraints. 
The underlying relaxation approach is associated to general sufficient conditions under which optimal controls are bang-bang.

%Our contribution hence is to give a general setting to study such constraints, along with a constructive way to obtain such controls, with the added bonus that these controls will be generically extremal (???)

%As is usually done, it is more amenable to relax this set by taking the closure of its convex hull. This convex hull then generates a convex cone. 

%We will work keeping in mind all these sets: the original cone of constraints, a closed generator thereof, called generating set, the convex hull of this generator, called relaxed generating set, and the resulting convex cone, called cone of relaxed constraints. Our work exploits the strategy of relaxation, with a quantitative study of cases where a relaxation phenomenon might occur: that is, when the relaxed optimal control problem yields optimal controls that are not in the original cone of constraints but merely in the cone of relaxed constraints. 

\subsection{Notations}
To introduce our notations, we let $H$ be a Hilbert space. For clarity, we will use the notation $P$ for cones, $C$ for convex sets, $K$ for general sets. For basic results concerning these notions, we refer, e.g., to~\cite{bauschke-combettes}.
\subsubsection{Functions}
For a function $f : H \rightarrow \mathopen{]}-\infty, +\infty]$, we let $\dom(f):= \{x \in H, \, f(x) < +\infty\}$ be its domain, and denote $\Gamma_0(H)$ the set of functions $H \rightarrow \mathopen{]}-\infty, +\infty]$ that are convex, lower-semicontinuous, and proper (\textit{i.e.}, $\dom(f) \neq \emptyset$).

For a function $f \in \Gamma_0(H)$, we let $f^\ast$ be its convex conjugate, given by \[\forall x\in H, \quad f^\star(x) := \sup_{y \in H} \, \big(\langle x, y \rangle -f(y)\big).\]
We have $f^\ast \in \Gamma_0(H)$, and Fenchel-Moreau's theorem states that $(f^\ast)^\ast = f$ for all $f \in \Gamma_0(H)$.

For $f \in \Gamma_0(H)$ and $x \in H$, we denote \[\partial f(x):= \{p \in H, \; \forall y \in H, \, f(y) \geq f(x) +  \scl{p}{y-x}\},\] the subdifferential set of $f$ at $x$.

For a (non-empty) closed convex set $C \subset H$, we let $\delta_C$ be its indicator function, \textit{i.e.} the function given by $\delta_C(x) = 0$  if $x \in C$ and $+\infty$ otherwise. We have $\delta_C \in \Gamma_0(H)$ and we let $\sigma_C := \delta_C^\ast$ be its support function, which by definition is given by \[\forall x \in H, \quad \sigma_C(x) = \sup_{y \in C} \langle x,y\rangle.\]
Finally, we let $j_C$ be the gauge function of $C$, namely
\[\forall x \in H,\quad j_C(x) := \inf \{\alpha>0, \; x \in \alpha C\}.\]
See Appendix~\ref{app-gauge-results} for some elementary results concerning gauge functions.

\subsubsection{Sets}
For a set $K \subset H$, we let 
\begin{itemize}
    \item $\overline{K}$ and $\overline{K}^{w}$ be its (strong) closure and weak closure, respectively,
    \item $\co(K)$ be its convex hull,
    \item $\cone(K)$ be the cone generated by $K$, given by \[\cone(K) = \{\lambda p, \; p \in K,\, \lambda > 0\}.\]
\end{itemize}
We also define $\adhco(K) := \overline{\co(K)} = \overline{\co(K)}^w$. We recall the caveat that $\cone(K)$ may not be closed even if $K$ is. 
%Finally, for $x\in H$, we let $T_K(x)$ be the tangent cone to $K$ at $x$ given by
%\[T_x K = \left\{v \in H, \; \liminf_{h\rightarrow 0, h>0} \frac{d(x+h v,K)}{h} =0\right\},\] where $d(\cdot, K)$ is the distance function to $K$. 

For a (non-empty) closed convex set $C\subset H$, we let 
\begin{itemize}
    \item $\ext(C)$ be the set of its extremal points,
    \item $\sing(C)$ be the set of its singular normal vectors, \textit{i.e.}, vectors $v\in H$ for which the maximum defining $\sigma_C(v)$ is reached at multiple points. 
\end{itemize}
\begin{figure}[h]
     \centering
         \includegraphics[width=0.38\textwidth]{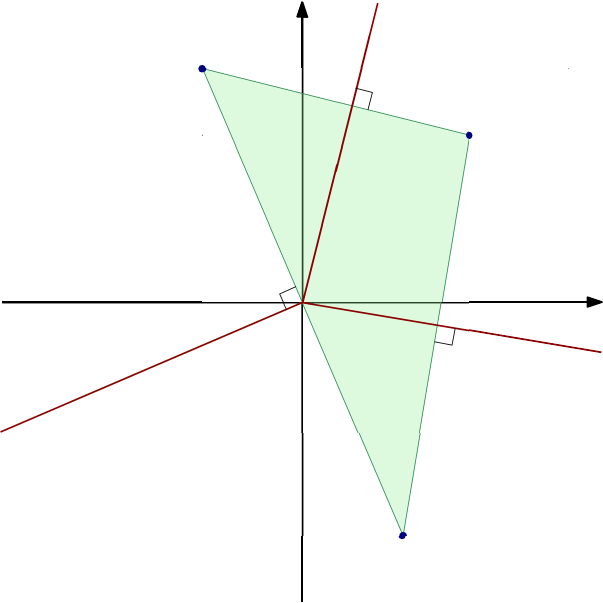}
        \caption{Example of a convex set (in green), with its extreme points (the blue dots), and the cone of its singular normal vectors (the three brown halflines).}
    \label{fig:example}
\end{figure}
We note that $\sing(C)$ is a cone (containing $0$ as soon as $C$ is not itself reduced to a singleton). For $H_2$ another Hilbert space and $B \in L(H,H_2)$, there holds $x \in \sing(B C) \iff B^\ast x \in \sing(C)$. 

We recall Milman's theorem~\cite{Rudin}: if $\adhco{(K)}$ is weakly (respectively strongly) compact, then
\[\ext(\adhco(K)) \subset \overline{K}^w \quad \text{resp. } \ext(\adhco(K)) \subset \overline{K}.\]
In particular, $\ext(\adhco(K)) \subset K$ whenever $\adhco{(K)}$ is weakly (resp. strongly) compact and $K$ weakly (resp. strongly) closed. 

Lastly, for a cone $P \subset H$, we let $P^\circ$ be its polar cone, \textit{i.e.}, $P^\circ := \{y \in H, \; \forall x \in P, \, \scl{x}{y}\leq 0\}$.

\subsection{Convex duality framework}
\subsubsection{Primal and dual optimisation problems}
Recall that we are investigating constrained reachability for conic constraints: the constraint set $P \subset U$ is a cone. Our approach will be based on generating the cone $P$ by some prescribed constraint set $\U$, which we call the \textit{generating constraint set}. 

As a result, we will be obtaining controls in $E$ such that, at a.e time $t \in (0,T)$, one can find an~\textit{amplitude} $M(t) \geq 0$ such that $u(t) = M(t) v(t)$ with $v(t) \in \U$.
In fact, our approach can also be adapted to build controls whose amplitude does not depend on $t \in (0,T)$, see our discussion in Subsection~\ref{subsec-ext-persp}.

Throughout, we will always assume that the chosen generating set $\U$ satisfies
\begin{equation}
\label{minimal-hyp}
    \text{$\U$ is bounded and $0 \in \U$.}
\end{equation}

Given a control system of the form \eqref{control}, by Duhamel's formula, one may write $y(T) = S_T y_0 + L_T u$ where $L_T \in L(E,X)$ is defined by \[L_T u:= \int_0^T S_{T-t}B u(t)\,dt.\]
Given $\e \geq 0$, finding a control $u$ taking values in $P$ steering $y_0$ to $\overline{B}(\tar,\e)$ in time $T$ is then equivalent to finding $u$ taking values in $P$ such that $L_T u \in \overline{B}(\tilde y_T, \e)$, where 
\[\tilde y_T :=  \tar - S_T y_0.\]
Of course, the case $\e=0$ is concerned with exact reachability since $\overline{B}(\tilde y_T, \e) = \{\tilde y_T\}$.

The adjoint $L_T^\ast \in L(X,E)$ of $L_T$ is given  for $\adjtar \in X$ by $L_T^\ast \adjtar(t) = B^\ast S_{T-t}^\ast \adjtar$, where $(S_t^\ast)_{t \geq 0}$ is the adjoint semigroup, generated by the adjoint operator $A^\ast$ to $A$, with domain $\mathcal{D}(A^\ast)$. In other words, one can write $L_T^\ast  \adjtar(t) = B^\ast p(t)$, where $p$ solves the equation
\begin{equation}
\label{backward}
\dot p + A^\ast p = 0, \quad p(T) = \adjtar.
\end{equation}

\paragraph{Relaxed optimisation problem.} \ Let $\U \subset U$ be a fixed generating set for $P$ i.e., such that $\cone(\U) =~P$, with $\U$ satisfying~\eqref{minimal-hyp}. We define the associated \textbf{relaxed generating constraint set}, and the \textbf{cone of relaxed constraints} it generates,
\[\U_r:=\adhco(\U), \qquad P_r:=\cone(\U_r).\]
Note that in general, $P_r \neq \adhco(P)$, since the cone generated by a closed set is not necessarily closed. This is summed up by the following diagram:

\[ \begin{tikzcd}[column sep=huge, row sep = huge]
P  & P_r  \\%
\U \arrow{r}{\adhco} \arrow{u}{\cone}& \U_r  \arrow{u}{\cone}
\end{tikzcd}
\]
This schematic highlights that our approach in building $P_r$ from $P$ is not canonical. In other words, different choices of generating sets $\U$ may lead to different cones of relaxed contraints $P_r$.

Now, we introduce the cost functional given for $u \in E$ by
\begin{equation}
\label{primal-cost}
F(u) := \tfrac{1}{2} \int_0^T j_{\U_r}^2(u(t))\,dt,
\end{equation}
which generalises the quadratic $L^2$ norm cost, and depends exclusively on $\U_r$.
We also denote the set of controls with finite cost \[L^2_{\U_r} := \{u \in E, \; t \mapsto j_{\U_r}(u(t)) \in L^2(0,T)\} = \{u \in E, \; F(u) < +\infty\}.\]

Note that if $u \in L^2_{\U_r}$, then for a.e. $t \in (0,T)$, $j_{\U_r}(u(t)) < +\infty$, hence $u(t) \in P_r$. In other words, if $u \in L^2_{\U_r}$, $u$ takes values in the cone of relaxed constraints $P_r = \cone(\U_r)$.

For a given choice of $y_0, \; \tar \in X$, $T>0$ and $\e \geq 0$, we define the following optimisation problem, in which the conic constraint $u\in P=\operatorname{cone}(\U)$ is relaxed to $u\in P_r = \operatorname{cone}(\U_r)$:  
\begin{equation}
    \label{ocp}
\inf_{u \in E, \; y(T) \in \overline{B}(\tilde y_T, \e)} F(u).
\end{equation}
By the preceding remarks, if this optimisation problem is not the trivial $+\infty$, then there exists a control $u$ taking values in the cone of relaxed constraints $\cone(\U_r)$ steering $y_0$ to $\overline{B}(\tar,\e)$ in time $T>0$.

\paragraph{Duality for the relaxed problem.} \ We show (see Proposition~\ref{compute-conj}) that $F$ is in $\Gamma_0(E)$ with, for all $p \in E$,
\begin{equation}
    \label{dual-cost}
F^\ast(p) = \tfrac{1}{2} \int_0^T \sigma_{\U_r}^2(p(t))\,dt, 
\end{equation}
and that this function takes finite values on the whole of $E$, namely $\dom(F^\ast) = E$.
Finally, for all $\e \geq 0$, we let 
\begin{equation}
\label{dual_func}
\forall \adjtar \in X, \quad J_{\e}(\adjtar) := F^\ast(L_T^\ast \adjtar) - \langle \tilde y_T, \adjtar \rangle_X + \e\|\adjtar\|_X,  
\end{equation}
and consider the associated so-called \textit{dual problem}
\begin{equation}
    \label{dual-problem}
\inf_{\adjtar \in X} J_{\e}(\adjtar).
\end{equation}
The optimisation problem~\eqref{ocp} is correspondingly called the \textit{primal problem}.

\subsubsection{Optimality conditions}
For a given choice of $\e \geq 0$, $y_0, \tar \in X$, $T>0$, the functional $J_{\e}$ may or may not admit a minimiser over~$X$.
If it does, we will see that the primal optimisation problem~\eqref{ocp} is not the trivial $+\infty$. More precisely, if $\adjtar^\star$ is a minimiser, all optimal controls must satisfy $u \in \partial F^\ast(L_T^\ast \adjtar^\star)$. Denoting $q^\star := L_T^\ast \adjtar^\star$, we will show that this corresponds to
    \begin{equation}
\label{opt_control} 
%\text{for a.e. } t \in(0,T), \quad u(t) \in M(t) \argmax_{v \in \U_r}\;  \langle  L_T^\ast \adjtar^\star(t), v\rangle_U,\qquad  M(t) := \sigma_{\U_r}(L_T^\ast \adjtar^\star(t))
\text{for a.e. } t \in(0,T), \quad u(t) \in \sigma_{\U_r}(q^\star(t)) \, \partial \sigma_{\U_r}(q^\star(t)) = \sigma_{\U_r}(q^\star(t)) \, \argmax_{v \in \U_r}\;  \langle  q^\star(t), v\rangle_U.
    \end{equation}
   Our method is a natural extension of the HUM method (a connection on which we elaborate in Section~\ref{subsec-unconstrained-example}). It yields controls with a varying amplitude: $j_{\U_r}(u(t))$ is time dependent. %On the other hand, the case $r=\infty$ is more delicate to handle. It harbors a less obvious connection to controllability in the unconstrained case, but yields controls such that $j_{\U_r}(u(t))$ is constant.}

\paragraph{Uniqueness.}    
These formulae will not always define a unique control, but we will see that at least one control satisfying~\eqref{opt_control} is an optimal control for~\eqref{ocp}. %In particular, if they do define a unique control, this will be the unique optimal control to~\eqref{ocp} with $r=2$ (resp. $r=\infty$). 
In fact, they will define a unique control if and only if
\begin{equation}
\tag{H}
\label{extremal}
%\text{if } \|\tilde y_T\|_X > \e, \text{ then }\quad 
 L_T^\ast \adjtar^\star(t)  \notin \sing( \U_r)  \cap (P_r^\circ)^c   \text{ for a.e. } t \in (0,T).
\end{equation}
This amounts to requiring $L_T^\ast \adjtar^\star$ to "avoid" the cone of singular normal vectors to $\U_r$, intersected with the complement of the polar cone $P_r^\circ$.

\paragraph{Extremality.}    
The optimal control above is a relaxation of the original constrained problem, in which the controls are required to take values in the cone of relaxed constraints $P_r$. In order to obtain controls satisfying the original constraints given by the cone $P$ upon solving the relaxed problem, a final hypothesis will be of interest: 
\begin{equation}
    \label{extremality}
    \tag{E}
    \ext(\U_r) \subset \U.
\end{equation}
Since $\U$ is assumed to be bounded, so is the closed convex set $\U_r$; hence the latter set is weakly compact. As a result, an application of Milman's theorem with either the strong or the weak topology shows that~\eqref{extremality} holds true as soon as either one of the following hypotheses holds
\begin{itemize}
    \item $\U$ is weakly closed,
    \item $\U_r$ is strongly compact and $\U$ is strongly closed.
\end{itemize}

\subsection{Main results}

Our two main theorems~\ref{thm-main-approx} and \ref{thm-main-exact} below are concerned with approximate and exact reachability, respectively. Most results are obtained by studying the dual functional~\eqref{dual_func}, whether for all $\e>0$ or for $\e=0$.
In fact, the sufficiency of our sufficient conditions for reachability~\eqref{cond-approx} and~\eqref{cond-exact} stem from that analysis, but the necessity is proved by independent arguments.   

Finally, recall the basic assumption~\eqref{minimal-hyp} about the generating constraint set, an assumption that underlies all the presented results. 
\subsubsection{Approximate reachability}
First, we consider the case of approximate reachability. In terms of constructive approaches, this corresponds to studying the dual functionals~\eqref{dual_func} for all $\e>0$.
\begin{thrm}
\label{thm-main-approx} The state $y_f$ is approximately reachable from $y_0$ in time $T>0$ under the cone of relaxed constraints $P_r = \cone(\U_r)$ if and only if  
\begin{equation}
\label{cond-approx}
  \forall \adjtar \in X, \quad   F^\ast(L_T^\ast \adjtar) = 0 \; \implies \; \scl{\tilde y_T}{\adjtar}_X \leq 0.
    \end{equation}

Now assume that the latter condition holds. Then $J_{\e}$ admits a unique minimiser $\adjtar^\star$ for any value of $\e>0$, and at least one control $u \in \partial F^\ast(L_T^\ast \adjtar^\star)$ steers the solution of~\eqref{control} from $y_0$ to the ball $\overline{B}(\tar, \e)$ in time~$T$. Furthermore,
\begin{itemize}
\item there exists a unique such control if and only if~\eqref{extremal} holds, 
\item if in addition~\eqref{extremality} holds, the unique control takes values in the original cone $P = \cone(\U)$.
\end{itemize} 
\end{thrm}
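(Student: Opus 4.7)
The plan is to analyse the dual functional $J_\e$ defined in~\eqref{dual_func}, which is convex, weakly lower semi-continuous, and finite everywhere on $X$ (since $F^\ast$ is finite and $L_T^\ast$ bounded). Necessity of~\eqref{cond-approx} is direct: if $\tar$ is approximately reachable under $P_r$, there exist controls $u_n \in L^2_{\U_r}$ with $L_T u_n \to \tilde y_T$; for any $p$ with $F^\ast(L_T^\ast p) = 0$, the pointwise integrand $\sigma_{\U_r}(L_T^\ast p(t))^2$ vanishes a.e., so $L_T^\ast p(t) \in P_r^\circ$ a.e., whence $\scl{L_T u_n}{p}_X = \scl{u_n}{L_T^\ast p}_E \leq 0$. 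Passing to the limit yields $\scl{\tilde y_T}{p}_X \leq 0$.

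For sufficiency, I would show that under~\eqref{cond-approx} the functional $J_\e$ admits a unique minimiser for every $\e>0$, the crucial property being coercivity. Assume $\|p_n\|_X\to\infty$ with $J_\e(p_n)$ bounded; set $\hat p_n := p_n/\|p_n\|_X$. The homogeneity identity $F^\ast(L_T^\ast p_n) = \|p_n\|_X^2 F^\ast(L_T^\ast \hat p_n)$ combined with $F^\ast \geq 0$ yields
\[\frac{J_\e(p_n)}{\|p_n\|_X} \geq -\scl{\tilde y_T}{\hat p_n}_X + \e.\]
Extracting a weakly convergent subsequence $\hat p_n \rightharpoonup \hat p$, the boundedness of $J_\e(p_n)$ forces $\|p_n\|_X F^\ast(L_T^\ast \hat p_n) \to 0$, so $F^\ast(L_T^\ast \hat p_n) \to 0$; weak lower semi-continuity of $F^\ast \circ L_T^\ast$ then gives $F^\ast(L_T^\ast \hat p) = 0$, so~\eqref{cond-approx} yields $\scl{\tilde y_T}{\hat p}_X \leq 0$ and passing to the limit produces $0 \geq \e > 0$, a contradiction. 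Coercivity and weak lower semi-continuity then provide a minimiser $\adjtar^\star$. Uniqueness follows from a strict-convexity analysis: if two distinct minimisers existed, equality in the Hilbert triangle inequality applied to their midpoint would force one to be a nonnegative multiple of the other, and the quadratic identity $F^\ast(L_T^\ast(tp)) = t^2 F^\ast(L_T^\ast p)$ combined with Jensen equality would then force $F^\ast(L_T^\ast \adjtar^\star) = 0$; but~\eqref{cond-approx} combined with $\e>0$ would give $J_\e(\adjtar^\star) > 0 = J_\e(0)$, contradicting minimality.

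For the constructive step, Fenchel subdifferential calculus (applicable since $F^\ast$ is convex continuous on $E$) converts $0 \in \partial J_\e(\adjtar^\star)$ into the existence of $u \in \partial F^\ast(L_T^\ast \adjtar^\star)$ and $g \in X$ with $\|g\|_X\leq 1$ satisfying $L_T u = \tilde y_T - \e g$; hence $\|L_T u - \tilde y_T\|_X \leq \e$, so $u$ realises approximate reachability. The pointwise description of $\partial F^\ast$ at $q = L_T^\ast \adjtar^\star$ reads $u(t) \in \sigma_{\U_r}(q(t))\,\argmax_{v \in \U_r} \scl{q(t)}{v}_U$ a.e.; this is single-valued unless both $\sigma_{\U_r}(q(t))>0$ and the argmax has more than one point, i.e.\ unless $q(t) \in \sing(\U_r) \cap (P_r^\circ)^c$, yielding the equivalence with~\eqref{extremal}. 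Under~\eqref{extremal}, the unique maximiser $v(t)$ is exposed, hence extreme in $\U_r$, so~\eqref{extremality} places $v(t)$ in $\U$ and $u(t) = \sigma_{\U_r}(q(t))\, v(t) \in \cone(\U) = P$.

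The delicate step is the coercivity of $J_\e$ under the sole hypothesis~\eqref{cond-approx}: along an unbounded minimising sequence, the weak limit of the renormalised directions may vanish, so the argument crucially uses the $\e\|\cdot\|_X$ penalty together with nonnegativity of $F^\ast$ to conclude without ever requiring strong convergence of $\hat p_n$.
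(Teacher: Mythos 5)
Your proposal is correct, and its skeleton — necessity by testing against adjoint states valued in $P_r^\circ$, coercivity of $J_\e$ under~\eqref{cond-approx} via renormalised sequences and weak lower semicontinuity, the pointwise subdifferential description of $\partial F^\ast$, and the exposed-point argument for extremality — coincides with the paper's. Two steps are carried out by genuinely different means. First, for uniqueness of the dual minimiser you exploit equality in the convexity inequalities for the two convex summands $F^\ast\circ L_T^\ast$ and $\e\|\cdot\|_X$ at the midpoint of two putative minimisers: strict convexity of the Hilbert norm forces the two to be nonnegatively proportional, the $2$-homogeneity of $F^\ast\circ L_T^\ast$ then forces $F^\ast(L_T^\ast \adjtar^\star)=0$, and~\eqref{cond-approx} yields the contradiction $J_\e(\adjtar^\star)\geq\e\|\adjtar^\star\|_X>0=J_\e(0)$. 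The paper instead passes through the primal problem: strict convexity of the ball $\overline{B}(\tilde y_T,\e)$ pins down a unique optimal target $y^\star$, the saddle-point condition places $\adjtar^\star$ on the half-line $\{\lambda(\tilde y_T-y^\star),\ \lambda\geq0\}$, and the restriction of $J_\e$ to that half-line is a strictly convex quadratic. Your argument is more self-contained (it never invokes the primal problem); the paper's yields the extra geometric fact that all optimal controls reach the same boundary point. Second, you extract the admissible control directly from Fermat's rule $0\in\partial J_\e(\adjtar^\star)$ together with the sum and chain rules (licit since $F^\ast$ is finite and continuous on $E$), obtaining $L_Tu=\tilde y_T-\e g$ with $\|g\|_X\leq1$; the paper instead invokes the Fenchel--Rockafellar theorem to obtain primal attainment and reads $u^\star\in\partial F^\ast(L_T^\ast\adjtar^\star)$ off the saddle-point conditions. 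The theorem as stated only requires an admissible control in that subdifferential, so your route suffices; the paper's additionally shows that this control minimises $F$ among all admissible controls, which is what makes the construction a genuine generalisation of HUM. Two harmless imprecisions: boundedness (above) of $J_\e(p_n)$ gives only boundedness of $\|p_n\|_X\,F^\ast(L_T^\ast\hat p_n)$, not its convergence to $0$, but since $\|p_n\|_X\to\infty$ this already forces $F^\ast(L_T^\ast\hat p_n)\to0$; and in the necessity step the approximating controls need only take values in $P_r$ almost everywhere, not lie in $L^2_{\U_r}$.
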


%At first glance, condition~\eqref{cond-approx}
%depends on how the {\color{magenta}relaxed cone} $P_r:= \cone(\U_r)$ is generated. This is because if $P_r$ is generated by another closed convex set, the function $F^\ast$ changes.  

It might be surprising that condition~\eqref{cond-approx} is equivalent to an approximate reachability condition regarding $P_r$, since $F^\ast$ depends on $\U_r$. 
We provide yet another equivalent condition that indeed explicitly depends only on the cone of relaxed constraints $P_r$.
\begin{lmm}
    \label{cont-cone}
Let $P_r = \cone(\U_r)$. Then~\eqref{cond-approx} is equivalent to
\begin{equation}
\label{CNS-cone}
\forall \adjtar \in X, \quad \left(\text{for a.e. } t \in (0,T), \quad L_T^\ast \adjtar(t) \in P_r^\circ\right) \quad \implies  \quad  \langle \tilde y_T, \adjtar \rangle_X \leq 0.
\end{equation}
\end{lmm}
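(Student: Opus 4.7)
The plan is to reduce the lemma to a pointwise characterisation, namely that for $q \in U$,
\[
\sigma_{\U_r}(q) = 0 \iff q \in P_r^\circ.
\]
Once this is established, the rest follows from the explicit formula for $F^\ast$ given in~\eqref{dual-cost}.

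First, I would observe that since $0 \in \U_r$ (by the assumption~\eqref{minimal-hyp}, passing to the closed convex hull), the support function satisfies $\sigma_{\U_r}(q) \geq \langle q, 0\rangle = 0$ for every $q \in U$. Combined with~\eqref{dual-cost}, this shows that the integrand $t \mapsto \sigma_{\U_r}^2(L_T^\ast \adjtar(t))$ is nonnegative, hence
\[
F^\ast(L_T^\ast \adjtar) = 0 \iff \sigma_{\U_r}(L_T^\ast \adjtar(t)) = 0 \ \text{for a.e. } t \in (0,T).
\]

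Next I would prove the pointwise equivalence. If $q \in P_r^\circ$, then since $\U_r \subset P_r \cup \{0\}$ (every nonzero $v \in \U_r$ lies in $\cone(\U_r) = P_r$), we have $\langle q, v\rangle \leq 0$ for all $v \in \U_r$, so $\sigma_{\U_r}(q) \leq 0$, and the previous paragraph forces equality. Conversely, if $\sigma_{\U_r}(q) = 0$, then $\langle q, v\rangle \leq 0$ for every $v \in \U_r$; writing an arbitrary $w \in P_r$ as $w = \lambda v$ with $\lambda > 0$ and $v \in \U_r$, we get $\langle q, w\rangle = \lambda \langle q, v\rangle \leq 0$, so $q \in P_r^\circ$.

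Putting these two steps together yields
\[
F^\ast(L_T^\ast \adjtar) = 0 \iff L_T^\ast \adjtar(t) \in P_r^\circ \ \text{for a.e. } t \in (0,T),
\]
and the equivalence between~\eqref{cond-approx} and~\eqref{CNS-cone} is then immediate. There is essentially no hard step here: the only point requiring a little care is the use of $0 \in \U_r$ together with $\cone(\U_r) = P_r$ to pass freely between the support function of $\U_r$ and the polar cone of $P_r$, which is exactly what the generating-set formalism is designed to enable.
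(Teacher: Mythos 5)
Your proposal is correct and follows essentially the same route as the paper: reduce $F^\ast(L_T^\ast \adjtar)=0$ to the a.e.\ vanishing of $\sigma_{\U_r}(L_T^\ast \adjtar(t))$, then identify $\{\sigma_{\U_r}=0\}$ with $P_r^\circ$ via $0\in\U_r$ and the homogeneity of the inner product over $\cone(\U_r)$. The only difference is that you spell out the nonnegativity of $\sigma_{\U_r}$ a little more explicitly, which the paper leaves implicit.
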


\subsubsection{Exact reachability}
%In the case where $\U$ is convex, recalling that $\U$ is always assumed to be weakly closed, we have $\adhco(\U) = \U$.
%\textcolor{red}{Question : mettre en "main result" la proposition 3.1. ?}
Second, we consider a necessary and sufficient condition for exact reachability, which leads to a constructive control under some additional technical assumptions. %assuming we are in the interesting case $\tilde y_T \neq 0$. 
In terms of constructive approaches, this corresponds to studying the dual functional~\eqref{dual_func} for $\e=0$.
We will be interested in the quantity 
\[c^\star:= \inf \left\{c \geq 0, \;  \forall \adjtar \in X, \; \scl{\tilde y_T}{\adjtar}_X \leq c \, F^\ast(L_T^\ast \adjtar)^{1/2}\right\} = \sup_{\|\adjtar\|_X =1}\frac{\scl{\tilde y_T}{\adjtar}_X}{F^\ast(L_T^\ast \adjtar)^{1/2}}, \]
with the convention $0/0 = 0$, $\alpha/0 = +\infty$ for $\alpha>0$.
Note that $c^\star \in [0,+\infty]$, and $c^\star \in (0,+\infty]$ if and only if $\tilde y_T  \neq 0$. 

In the case of exact reachability, we are led to specify some additional time-regularity controls may/should have, in the form $u \in L^2_{\U_r}$. In this case, we will say that \textit{$\tar$ is exactly reachable from $y_0$ in time $T$
%is approximately controllable from $y_0$ to $\tar$ in time $T>0$ 
under the constraints $P$ (or $P_r$) with controls in $L^2_{\U_r}$.}

\begin{thrm}
\label{thm-main-exact}
The state $y_f$ is exactly reachable from $y_0$ in time $T>0$ under the cone of relaxed constraints $P_r = \cone(\U_r)$ with controls in $L^2_{\U_r}$ if and only if  
\begin{equation}
\label{cond-exact}\exists c > 0, \; \forall \adjtar \in X, \quad \scl{\tilde y_T}{\adjtar}_X \leq c \, F^\ast(L_T^\ast \adjtar)^{1/2}.
\end{equation}

Now assume that the latter condition holds, \textit{i.e.}, $c^\star < +\infty$. Then  $J_{0}$ admits a minimiser if and only if $c^\star$ is attained, and if so for any such minimiser $\adjtar^\star$, at least one control $u \in \partial F^\ast(L_T^\ast \adjtar^\star)$
steers the solution of~\eqref{control} from $y_0$ to $\tar$ in time~$T$. Furthermore, 
\begin{itemize}
\item there exists a unique such control if and only if~\eqref{extremal} holds,
\item if in addition~\eqref{extremality} holds, the unique control takes values in the original cone $P = \cone(\U)$.
\end{itemize} 
\end{thrm}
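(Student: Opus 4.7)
The plan is to apply Fenchel-Rockafellar duality to the primal problem $V:=\inf\{F(u)\,:\,u\in E,\ L_T u=\tilde y_T\}$ and exploit the positive $2$-homogeneity of $F^\ast$; exact reachability with controls in $L^2_{\U_r}$ is precisely $V<+\infty$. Since $F^\ast$ is finite and continuous on the whole of $E$, Fenchel-Rockafellar gives strong duality $V=-\inf_{p_f\in X}J_0(p_f)$. I would then reduce the dual sup to a one-dimensional maximisation along each positive ray: for fixed $p_f$ and $\lambda\geq 0$, $-J_0(\lambda p_f)=\lambda\langle\tilde y_T,p_f\rangle-\lambda^2 F^\ast(L_T^\ast p_f)$ attains supremum $\langle\tilde y_T,p_f\rangle^2/(4F^\ast(L_T^\ast p_f))$ when $\langle\tilde y_T,p_f\rangle>0$ and $F^\ast(L_T^\ast p_f)>0$, and $+\infty$ when $\langle\tilde y_T,p_f\rangle>0$ and the denominator vanishes. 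Maximising over $p_f$ yields the identity $V=(c^\star)^2/4$, so $V<+\infty$ iff~\eqref{cond-exact}. A direct (duality-free) argument for the necessity of~\eqref{cond-exact} applies pointwise Fenchel-Young $\langle u^\star(t),L_T^\ast p_f(t)\rangle_U\leq j_{\U_r}(u^\star(t))\sigma_{\U_r}(L_T^\ast p_f(t))$ and Cauchy-Schwarz to any reaching $u^\star\in L^2_{\U_r}$, yielding $c=2\sqrt{F(u^\star)}$.

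Next I would link minimisers of $J_0$ with attainment of $c^\star$. If $p_f^\star$ minimises $J_0$, differentiating $\lambda\mapsto J_0(\lambda p_f^\star)$ at $\lambda=1$ and using $2$-homogeneity gives $\langle\tilde y_T,p_f^\star\rangle=2F^\ast(L_T^\ast p_f^\star)$, so $\langle\tilde y_T,p_f^\star\rangle/F^\ast(L_T^\ast p_f^\star)^{1/2}=c^\star$; conversely, any maximiser of the defining ratio, rescaled by the optimal $\lambda$, minimises $J_0$. The first-order condition $0\in\partial J_0(p_f^\star)$, together with the subdifferential chain rule (applicable since $F^\ast$ is continuous on $E$), gives $\tilde y_T\in L_T\bigl(\partial F^\ast(L_T^\ast p_f^\star)\bigr)$, so some $u\in\partial F^\ast(L_T^\ast p_f^\star)$ satisfies $L_T u=\tilde y_T$: this is an exact-reachability control.

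The final step reads off the structure of $u$ from the description $\partial F^\ast(q)=\{u\in E\,:\,u(t)\in\sigma_{\U_r}(q(t))\,\argmax_{v\in\U_r}\langle q(t),v\rangle_U\text{ a.e.}\}$ obtained from Proposition~\ref{compute-conj}. Writing $q^\star:=L_T^\ast p_f^\star$ and using $0\in\U_r$, one checks that $\sigma_{\U_r}(q^\star(t))=0$ iff $q^\star(t)\in P_r^\circ$, in which case the pointwise subdifferential is $\{0\}$; on the complement, it is a singleton iff $q^\star(t)\notin\sing(\U_r)$. Combining both cases, pointwise a.e.\ uniqueness of $u$, which a measurable-selection argument upgrades to uniqueness of the $L^2$ control, is equivalent to~\eqref{extremal}. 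Under~\eqref{extremal}, whenever $q^\star(t)\notin P_r^\circ$, the unique maximiser of $\langle q^\star(t),\cdot\rangle_U$ over the weakly compact convex $\U_r$ is necessarily an extreme point of $\U_r$; under~\eqref{extremality} this point lies in $\U$, so $u(t)\in\sigma_{\U_r}(q^\star(t))\U\subset\cone(\U)=P$, and $u(t)=0\in P$ otherwise.

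The main obstacle lies in the careful treatment of degenerate directions $p_f$ with $F^\ast(L_T^\ast p_f)=0$, which govern the dichotomy between $V<+\infty$ and $V=+\infty$ and must be reconciled with the identity $V=(c^\star)^2/4$ as well as with the possibility $c^\star=+\infty$. A secondary subtle point is the measurable-selection step producing two distinct $L^2$ controls when~\eqref{extremal} fails on a positive-measure set, which is what makes~\eqref{extremal} necessary, and not merely sufficient, for uniqueness of the control.
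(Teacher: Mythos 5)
Your proposal is correct and follows essentially the same route as the paper: Fenchel--Rockafellar duality with the qualification given by continuity of $F^\ast$, the reduction of $\inf J_0$ to a one-dimensional minimisation along rays yielding $\inf J_0=-(c^\star)^2/4$ (hence the link between minimisers of $J_0$ and attainment of $c^\star$), the direct Fenchel--Young/Cauchy--Schwarz argument for the necessity of~\eqref{cond-exact} with $c=2F(u)^{1/2}$, and the pointwise subdifferential analysis of $\partial F^\ast(L_T^\ast \adjtar^\star)$ for uniqueness under~\eqref{extremal} and extremality under~\eqref{extremality}. The only differences are cosmetic (you package the reachability equivalence as the single identity $V=(c^\star)^2/4$, and you obtain the optimal control from the first-order condition $0\in\partial J_0(\adjtar^\star)$ rather than from the saddle-point formulation), and your flagged subtleties --- the degenerate directions with $F^\ast(L_T^\ast\adjtar)=0$ and the measurable selection needed for the ``only if'' of uniqueness --- are exactly the right points to be careful about.
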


Contrarily to the case of approximate reachability, we must consider the additional information that controls are in $L^2_{\U_r}$. 
%We note, however, that if~\eqref{cond-exact} holds for $r=\infty$, then it holds for $r=2$ as well. 
Let us give one general sufficient condition making this condition superfluous.% then it is intrinsic to the cone $P_r$, \textit{i.e.}, it does not depend on how it has been generated. 

If $\U_r$ (which is a closed, bounded and convex set) is such that $0$ is in the interior of $\U_r$ relative to the cone it generates, \ie if
\begin{equation}
\label{regular_generating_set}
\exists \delta>0, \quad  \cone(\U_r) \cap \overline{B}(0,\delta) \subset \U_r,
\end{equation}
then it is easily seen that $j_{\U_r}(u) \leq \delta^{-1}\|u\|_U$ for all $u \in U$. As a result, if $\U_r$ satisfies~\eqref{regular_generating_set}, then $L^2_{\U_r} = E$ so that the additional regularity requirement that controls be in $L^2_{\U_r}$ may safely be removed. We also note that under~\eqref{regular_generating_set}, $P_r = \cone(\U_r)$ is closed, see Appendix~\ref{app-gauge-results}.

%Consequently, condition~\eqref{cond-exact} becomes equivalent to the assertion "$y_f$ is exactly reachable from $y_0$ in time $T>0$ under the constraints $P_r$" and hence is intrinsic to the cone $P_r$. 

%{\color{red} Peut-on donner des conditions suffisantes pour que~\eqref{cond-exact} devienne intrinsèque au cône ?}

\subsubsection{Consequences}
Let us now explain the implications these theorems have, first when the chosen set $\U$ to generate the cone $P$ is convex and closed, second when this assumption is dropped.  

\paragraph{Convex closed case.}
First, assume that $\U$ is convex and closed, in which case relaxation is unnecessary. Hence, $\U_r=\U$ and $P_r = \cone(\U_r) = \cone(\U) = P$, making the assumption~\eqref{extremality} true (since $\U$ is then weakly closed) but pointless. 

In this case, Theorem~\ref{thm-main-approx} shows that whenever $\tar$ is approximately reachable from $y_0$ in time $T>0$ under the constraints $P=P_r$, our method provides a constructive way to obtain controls achieving approximate reachability, provided that~\eqref{extremal} holds.

Theorem~\ref{thm-main-exact} shows that whenever $\tar$ is exactly reachable from $y_0$ in time $T>0$ under the constraints $P=P_r$ with controls in $L^2_{\U_r} = L^2_{\U}$, our method provides a constructive way to obtain controls achieving exact reachability, provided  that $c^\star$ is attained and~\eqref{extremal} holds.

\paragraph{General case.}
When $\U$ is no longer assumed to be convex and closed, Theorem~\ref{thm-main-approx}  and Theorem~\ref{thm-main-exact} yield new reachability results in the following sense.

If $\tar$ is approximately reachable from $y_0$ in time $T>0$ with controls under the cone of relaxed constraints $P_r =\cone(\U_r)$, then $\tar$ is also approximately reachable from $y_0$ to $\tar$ in time $T>0$ under the original constraint cone $P=\cone(\U)$, provided that \eqref{extremal}  and~\eqref{extremality} hold. Furthermore, our method provides a constructive way to build controls achieving approximate reachability.

If $\tar$ is exactly reachable from $y_0$ in time $T>0$ with controls under the cone of relaxed constraints $P_r = \cone(\U_r)$ with controls in $L^2_{\U_r}$, then $\tar$ is also exactly reachable from $y_0$ to $\tar$ in time $T>0$ under the original constraint cone $P=\cone(\U)$, provided that $c^\star$ is attained and that \eqref{extremal} and~\eqref{extremality} hold. Furthermore, our method provides a constructive way to build controls achieving exact reachability.

\paragraph{The bang-bang principle.}
Our results bear a strong connexion to the so-called \textit{bang-bang principle}.
The bang-bang principle is a property that many control systems satisfy, which can be stated as follows
\[R^T_K(y_0)=R^T_{\operatorname{ext}(K)}(y_0),\]
where the notation $R_K^T(y_0)$ stands for the reachable set from $y_0$ in time $T$, under the constraints given by the set $K$.
The above principle hence means that any state that can be reached in time $T$, from a given initial state $y_0$, with controls in a convex compact set $K$, can also be reached with controls in the set of extremal points of $K$. 

It also exists in the weaker form
\[R^T_K(y_0)=\overline{R^T_{\operatorname{ext}(K)}(y_0)}^w,\]
for weakly compact convex sets $K$. We refer to \cite{gugat2008norm} for more details.

An important difference is that our constraint sets are not bounded, as they are cones, but we do consider generating sets which are bounded. Relaxing these constraint sets to their convex hulls allows us to work with a convex optimization framework, in which we recover, under certain general conditions, a form of bang-bang principle.

\paragraph{Condition~\eqref{extremal}.}
Since the central condition~\eqref{extremal} may not be straightforward to check as one has little information about $\adjtar^\star$, one may replace it with the following weaker condition that no longer depends on $\e\geq 0$, $y_0$, $\tar$ and $T>0$:
\begin{equation}
\tag{$\widetilde{\text{H}}$}
\label{extremal_bis}
\forall \adjtar \neq 0,  \quad  B^\ast S_t^\ast \adjtar \notin \sing(\U_r) \cap (P_r^\circ)^c  \text{ for a.e. } t >0.
\end{equation}
%We note the less stringent assumption that $0$ is allowed in the case $i=1$).
%\textcolor{red}{On pourrait ici donner  des conditions plus générales que que~\eqref{extremal}, puisque pour la contrôlabilité tout ce qui compte est d'avoir pu démontrer que la maximisation du produit scalaire conduit à des maximiseurs dans $\U$, même s'il n'y a pas unicité.}

Of course,~\eqref{extremal_bis} implies ~\eqref{extremal}. In order to discuss the above conditions, we give the following useful result when it comes to proving that the adjoint trajectory does not spend time within a given set. 

For a vector $b \in X$, we let $\ell(b)$ be the largest integer such that $b \in \mathcal{D}(A^j)$, with the convention $\ell(b) = +\infty$ if $b \in\mathcal{D}(A^j)$ for all $j \in \N$.
%which now is independent of $T>0$ as well.
\begin{prpstn}
\label{suff-cond-extremal}
    Let $K\subset X$ be any set, and assume that 
    \begin{itemize}
        \item[(i)] the semigroup $(S_t^\ast)$ is injective,
        \item[(ii)] 
    \[\big(\mathrm{Ran}\big\{A^j b, \; b \in K^\perp, \, 0 \leq j \leq \ell(b)\big\}\big)^\perp = \{0\}.\]
    \end{itemize}
    Then the set $\{t \geq 0, \, S_t^\ast \adjtar \in K\}$ is of measure $0$ for any $\adjtar \neq 0$. 
\end{prpstn}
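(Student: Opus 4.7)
The plan is to argue by contradiction. Suppose $\adjtar\neq 0$ and yet $E:=\{t\geq 0 : S_t^\ast \adjtar\in K\}$ has strictly positive Lebesgue measure. By Lebesgue's density theorem, almost every $t_0\in E$ is a density point, and we may fix such a $t_0>0$; in particular there exists a sequence $(t_n)\subset E\setminus\{t_0\}$ with $t_n\to t_0$, which will serve as our approximating data.

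For every $b\in K^\perp$, the inclusion $S_t^\ast \adjtar\in K$ on $E$ amounts to the scalar identity $\phi_b(t)=0$ on $E$, where
\[\phi_b(t):=\langle S_t^\ast \adjtar, b\rangle_X=\langle \adjtar, S_t b\rangle_X.\]
Since $\mathcal{D}(A^j)$ is invariant under the semigroup and $\tfrac{d^j}{dt^j}S_t b=S_t A^j b$ whenever $b\in\mathcal{D}(A^j)$, the function $\phi_b$ belongs to $C^{\ell(b)}([0,\infty))$ with
\[\phi_b^{(j)}(t)=\langle S_t^\ast \adjtar, A^j b\rangle_X,\qquad 0\leq j\leq \ell(b).\]
I would then show by induction on $j$ that $\phi_b^{(j)}(t_0)=0$ for every $0\leq j\leq \ell(b)$. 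The base case $j=0$ follows from continuity and $t_0\in E$. For the inductive step, assuming $\phi_b^{(k)}(t_0)=0$ for $k<j$, the Peano form of Taylor's expansion of order $j$ at $t_0$ reduces to
\[0=\phi_b(t_n)=\tfrac{1}{j!}\phi_b^{(j)}(t_0)(t_n-t_0)^j+o\big((t_n-t_0)^j\big);\]
dividing by $(t_n-t_0)^j$ and letting $n\to\infty$ forces $\phi_b^{(j)}(t_0)=0$.

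Evaluating at $t_0$ yields $\langle S_{t_0}^\ast \adjtar, A^j b\rangle_X=0$ for every $b\in K^\perp$ and $0\leq j\leq \ell(b)$. Hypothesis (ii) then forces $S_{t_0}^\ast \adjtar=0$, and injectivity of the semigroup (hypothesis (i)) finally yields $\adjtar=0$, contradicting the standing assumption. The main obstacle I expect is the inductive Taylor step: one has to justify carefully that the density point actually supplies a genuine approximating sequence in $E\setminus\{t_0\}$, and that the cascade of Peano remainders can be successively divided out after each preceding derivative has been shown to vanish. Everything else reduces to the standard smoothness of orbits on $\mathcal{D}(A^j)$ and a direct appeal to the orthogonality hypothesis (ii).
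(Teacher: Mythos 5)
Your proof is correct, and although it follows the same overall skeleton as the paper's (argue by contradiction, differentiate the scalar maps $t\mapsto\langle S_t^\ast p_f, b\rangle=\langle p_f,S_t b\rangle$ to reach $\langle S_t^\ast p_f, A^jb\rangle=0$, then invoke (ii) and (i)), the key step is carried out by a genuinely different mechanism. The paper only ever uses first-order difference quotients, and therefore has to iterate a limit-point construction: it replaces $K$ by $\overline K$ to get a closed set of times $I_0$, passes to its set of limit points $I_1$ (closed, of the same measure), then to $I_2$, and so on, and needs the intersection $J=\bigcap_n I_n$ to retain positive measure so that a single $t\in J$ annihilates all the vectors $A^jb$ at once. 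You instead fix one accumulation point $t_0$ of $E$ lying in $E$, together with one sequence $t_n\to t_0$ in $E$, and run an induction on $j$ via the order-$j$ Taylor--Peano expansion of $\phi_b$ at $t_0$; this trades the nested-sets bookkeeping for higher-order Taylor formulas, which are legitimate since $t\mapsto S_tb$ is of class $C^{j}$ whenever $b\in\mathcal{D}(A^{j})$, and the cascade of remainders does divide out exactly as you describe. Both routes are sound; yours is lighter on measure theory. One small caveat: $K$ is an arbitrary set, so $E$ need not be Lebesgue measurable and the density theorem does not literally apply. All you actually need, however, is an accumulation point of $E$ belonging to $E$, which exists whenever $E$ is not a null set (a subset of $\R$ all of whose points are isolated is countable, hence null); alternatively, pass to $\overline K$ first, as the paper does, which leaves $K^\perp$ unchanged and makes the time set closed.
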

The proof of Proposition~\ref{suff-cond-extremal} is given in Appendix~\ref{proof-avoid-set}.

Recall that $B^\ast q \in \sing(\U_r)$ if and only if $q \in \sing(B\U_r)$, and note that $B^\ast q \in (P_r^\circ)^c$ if and only if $q \in ((B P_r)^\circ)^c$.
Hence, one can in practice try and apply Proposition~\ref{suff-cond-extremal} to $K = \sing(B \, \U_r) \cap ((B P_r)^\circ)^c$ to obtain~\eqref{extremal_bis}. However, it is important to note that Proposition~\ref{suff-cond-extremal} only sees the closure of the subspace spanned by $K$, in the sense that $K$ satisfies the required hypotheses if and only if $\overline{\mathrm{Ran}(K)}$ does, since $\overline{\mathrm{Ran}(K)}$ and $K$ have the same orthogonal complement. As a result, it will usually be more appropriate to use Proposition~\ref{suff-cond-extremal} to subcones that constitute the cone $\sing(B \U_r)\cap ((B P_r)^\circ)^c$. 

In practical cases and in finite dimension, there will indeed typically be finitely many such subcones; see Subsection~\ref{subsec-toy-example} for an example.

\subsection{Extensions and perspectives}
\label{subsec-ext-persp}

\paragraph{Alternative cost.}
It would be possible to consider a slightly different cost than~\eqref{primal-cost}, namely 
\begin{equation}
\label{primal-cost-linfty}
F(u) := \tfrac{1}{2} \sup_{t \in (0,T)} j_{\U_r}^2(u(t)).
\end{equation}
which would lead to the dual functional $J_\e$ as before but with $F^\ast$ given by
\begin{equation}
    \label{dual-cost-linfty}
F^\ast(p) = \tfrac{1}{2} \left(\int_0^T \sigma_{\U_r}(p(t))\,dt\right)^2.
\end{equation}

One advantage is that one would obtain controls with a fixed amplitude rather than a time-varying one, which can be an interesting feature for applications. In fact, this is done in~\cite{Nous} for a specific control problem.

However, this alternative cost would lead to several adjustments; in particular, the more natural functional setting for generalising all our results to~\eqref{dual-cost-linfty} would be $L^\infty(0,T;U)$, which has a less natural dual structure than $E = L^2(0,T;U)$. In order to lighten our presentation, we have chosen not to do so. 
    %The link with observability is probably less clear given that the dual of $L^\infty$ is not well known. How does one write observability inequalities for controls that are $L^\infty(0,T)$?
%\end{rmrk}

\paragraph{Open questions.}
In the convex case, our variational method allowed us to derive sufficient conditions for approximate or exact reachability in cone. We have further proved that these conditions happen to be necessary, by other means.

In the non-convex case, relaxation yields a set of sufficient conditions for approximate or exact reachability in nonconvex cones. 
This leaves open the question of the necessity of these conditions. 

Accordingly, studying the necessity of these conditions will allow for a more detailed picture of the bang-bang principle in infinite dimensions, with undounded constraints. Indeed, finding counterexamples, or proving that these conditions are necessary, will provide an in-depth understanding of failures of the bang-bang principle in infinite dimension.

\bigskip 

\paragraph{Outline of the paper.}
Our article is split into two parts. The purpose of Section~\ref{sec-examples} is to apply our method to various examples, which leads to the generalisation of previous works as well as to new results.
Then, Section~\ref{sec-proofs} compiles the proofs of all our results, building upon the Fenchel-Rockafellar theorem.  
\section{Examples and applications}
\label{sec-examples}
We discuss the application of our method to four examples.
\begin{itemize}
\item we show that the HUM method is a particular case of our methodology.
\item we analyse a toy example in small dimension with non-convex constraints. We explain in full detail how to properly follow the different steps underlying our method. 
\item we study abstract general finite-dimensional control problems under the $k$-sparsity constraints. This generalises an approach developed in~\cite{zuazua2010switching} for $k=1$.
\item we discuss approximate reachability for control problems in $L^2(\Omega)$. First, we consider nonnegativity constraints and then proceed to adding specific sparsity constraints.
We recover the result of~\cite{Nous} regarding the on-off shape control of parabolic equations, with the subtle difference that controllers have time-varying amplitude. 
\end{itemize}

\subsection{Unconstrained case}
\label{subsec-unconstrained-example}

\subsubsection{HUM method}
We here assume that $\U$ is the unit ball $\{u \in U, \; \|u\|_U \leq 1\}$, which obviously falls in the convex case with $\U_r = \U$. 

Then the cone $\cone(\U_r) = P_r = P = \cone(\U)$ is the whole of $U$: we are in the unconstrained case. In this setting, there are numerous sufficient conditions for approximate (resp. exact) controllability to hold, yielding approximate (resp. exact) reachability independently of $y_0$, $\tar$ and, possibly, $T>0$.
In this case, we find $\sigma_{\U}(u) = j_\U(u)= \|u\|_U$, meaning that $F^\ast(p) = \tfrac{1}{2} \|p\|_E^2$, hence the functional $J_{\e}$ boils down to 
\begin{align}    J_{\e}(\adjtar) = \tfrac{1}{2} \int_0^T \|L_T^\ast \adjtar(t)\|_U^2\, dt - \langle \tilde y_T, \adjtar\rangle_X + \e \|\adjtar\|_X 
=\tfrac{1}{2} \|L_T^\ast \adjtar\|_E^2 - \langle \tilde y_T, \adjtar\rangle_X + \e \|\adjtar\|_X
\end{align}
and the corresponding optimisation problem is given by
\[\inf_{u \in E, \;  y(T) \in \overline{B}(\tilde y_T, \e)} \tfrac{1}{2}\int_0^T \|u(t)\|_X^2 \,dt = \inf_{u \in E, \;  y(T) \in \overline{B}(\tilde y_T, \e)} \tfrac{1}{2}\|u\|_{E}^2.\]
For $\e=0$, we recover the functional underlying the so-called Hilbert Uniqueness Method, introduced by Lions in~\cite{lions-1988}. For $\e>0$, we recover the functional introduced  by Lions in~\cite{Lions-1992} to study approximate controllability.

Whenever a dual optimal variable exists, it gives rise to a unique control through~\eqref{opt_control}, since the latter equation then amounts to $u^\star = L_T^\ast \adjtar^\star$.

\subsubsection{Exact controllability}
%Now take any set $\U$ satisfying~\eqref{minimal-hyp}, such that $\cone(\adhco(\U))$ is the whole space $U$. 
%Let us here fix $T>0$.

As is well known, exact controllability at time $T$ (\ie exact reachability for any $y_0, \tar \in X$ in time $T$) is equivalent to the observability inequality: 
\begin{equation}
    \label{obs}
\exists C >0, \, \forall \adjtar \in X, \quad \|\adjtar\|_X^2 \leq C^2 \|L_T^\ast \adjtar\|_E^2
\end{equation}
Furthermore, it is also well known that one may then achieve exact controllability by minimising the functional $J_{0}$, that is, the dual functional attains its minimum. Indeed, it is easily seen to be coercive in this case. 

Let us explain how our framework recovers this case as well: we have $c^\star< +\infty$ and $c^\star$ is attained, whatever the choice of $y_0, \tar$ in $X$.

%\[c(\tilde y_T) = \inf_{
%L_T u = \tilde y_T} \|u\|_E \qquad c(\lambda \tilde y_T) = \lambda c(\tilde y_T)\]

\begin{prpstn} 
\label{obs-c2}
Let $T>0$ be fixed, and assume that~\eqref{obs} holds. Then whatever $y_0$, $\tar$ in $X$, $c^\star<+\infty$ and it is attained. 
\end{prpstn}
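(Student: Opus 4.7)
The plan is to derive both claims by exploiting the observability inequality together with the explicit form of $F^\ast$ in the unconstrained setting, where one has $F^\ast(L_T^\ast \adjtar) = \tfrac{1}{2}\|L_T^\ast \adjtar\|_E^2$, and then to invoke Theorem~\ref{thm-main-exact} for the attainment part.

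First I would prove that $c^\star < +\infty$. Applying Cauchy-Schwarz in $X$ followed by \eqref{obs} gives, for every $\adjtar \in X$,
\[
\langle \tilde y_T, \adjtar\rangle_X \leq \|\tilde y_T\|_X \|\adjtar\|_X \leq C\,\|\tilde y_T\|_X\, \|L_T^\ast \adjtar\|_E = C\sqrt{2}\,\|\tilde y_T\|_X\, F^\ast(L_T^\ast \adjtar)^{1/2},
\]
so the definition of $c^\star$ immediately yields $c^\star \leq C\sqrt{2}\,\|\tilde y_T\|_X < +\infty$. If $\tilde y_T = 0$, the supremum is trivially attained (at any $\adjtar$), so I can assume $\tilde y_T \neq 0$ for the remainder.

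For attainment, I would use the equivalence from Theorem~\ref{thm-main-exact}: once $c^\star < +\infty$, $c^\star$ is attained if and only if $J_{0}$ admits a minimiser. Reducing to the existence of a minimiser of $J_0$ is the classical HUM step. The functional
\[
J_0(\adjtar) = \tfrac{1}{2}\|L_T^\ast \adjtar\|_E^2 - \langle \tilde y_T, \adjtar\rangle_X
\]
is continuous and convex on $X$, hence weakly lower-semicontinuous, and the observability inequality \eqref{obs} yields the coercivity bound
\[
J_0(\adjtar) \geq \tfrac{1}{2C^2}\|\adjtar\|_X^2 - \|\tilde y_T\|_X\|\adjtar\|_X,
\]
which tends to $+\infty$ as $\|\adjtar\|_X \to +\infty$. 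A minimising sequence is therefore bounded in the Hilbert space $X$ and admits a weak cluster point, which by weak lower-semicontinuity realises the infimum. Invoking Theorem~\ref{thm-main-exact} then delivers the attainment of $c^\star$.

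I do not anticipate any real obstacle: the argument is essentially bookkeeping the constant linking $\|L_T^\ast\cdot\|_E$ to $F^\ast(L_T^\ast\cdot)^{1/2}$, and applying the standard Lions-HUM existence proof repackaged through the duality framework of Theorem~\ref{thm-main-exact}.
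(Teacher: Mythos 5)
Your proof is correct, and the first half (the bound $c^\star \leq \sqrt{2}\,C\,\|\tilde y_T\|_X$ via Cauchy--Schwarz and \eqref{obs}) is exactly the paper's argument. For the attainment, however, you take a genuinely different route. The paper works directly on the quotient defining $c^\star$: it takes a maximising sequence $(\adjtar^n)$ on the unit sphere, extracts a weak limit $\adjtar$, uses \eqref{obs} to bound the denominator $\|L_T^\ast \adjtar^n\|_E$ away from zero (which forces $\langle \tilde y_T,\adjtar\rangle_X>0$, hence $\adjtar\neq 0$ and $L_T^\ast\adjtar\neq 0$), and concludes by weak lower semicontinuity of the norm that the limit realises the supremum. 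You instead prove that $J_{0}$ is coercive (via \eqref{obs}), convex and continuous, hence admits a minimiser, and then transfer attainment of the minimum of $J_{0}$ to attainment of $c^\star$ through the equivalence stated in Theorem~\ref{thm-main-exact}. Both arguments are valid and of comparable length; yours is the more ``standard HUM'' packaging and leans on the general duality machinery of Section~3 (which is proved independently of this proposition, so there is no circularity), while the paper's is self-contained at the level of the Rayleigh-type quotient and does not need the equivalence between minimisers of $J_0$ and attainment of $c^\star$. One could argue the paper's choice is deliberate: it illustrates that $c^\star$ can be analysed directly, which is the object that matters in the general constrained setting where coercivity of $J_0$ may fail.
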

\begin{proof}
We let $y_0$, $\tar$ be fixed in $X$.
The first statement is readily obtained by applying the Cauchy-Schwarz inequality:
\[\forall \adjtar \in X, \quad \langle \tilde y_T, \adjtar \rangle_X \leq \|\tilde y_T\|_X \|\adjtar\|_X \leq C \|\tilde y_T\|_X \|L_T^\ast \adjtar\|_E =\sqrt{2} C  \|\tilde y_T\|_X F^\ast(L_T^\ast \adjtar)^{1/2}, \]
this shows that $c^\star \leq\sqrt{2} C \|\tilde y_T\|_X < +\infty$.

Now let us prove that $c^\star$ is attained. The case $\tilde y_T = 0$ (which corresponds to $S_T y_0 = \tar$ or, equivalently, to $c^\star = 0$) is obvious. 

In the interesting case where $\tilde y_T \neq 0$ or equivalently $c^\star >0$, let $(\adjtar^n)$, $\|\adjtar^n\|_X = 1$ be a maximising sequence, \textit{i.e.}, $\tfrac{\langle \tilde y_T, \adjtar^n \rangle_X}{\|L_T^\ast \adjtar^n\|_E}$ converges to $c^\star$.
%Whenever $\adjtar^n = 0$, the quotient is~$0$. Since it is supposed to converge to a positive value, $\adjtar^n \neq 0$ for $n$ large enough and we may thus define $q_f^n = \tfrac{\adjtar^n}{\|\adjtar^n\|_X}$, so that by homogeneity we have convergence of the quotient $\tfrac{\langle \tilde y_T, q_f^n \rangle_X}{\|L_T^\ast q_f^n\|_E}$ to $c^\star$. 
Upon extraction, we may assume that $\adjtar^n \rightharpoonup \adjtar$ in $X$. The numerator hence converges, and we must have $\langle \tilde y_T, \adjtar \rangle_X \geq 0$ for the quotient to converge to a positive value. Now in view of~\eqref{obs}, we have $\|L_T^\ast \adjtar^n\|_E \geq C^{-1} \|\adjtar^n\|_X = C^{-1}$, hence the denominator is bounded away from $0$. This proves that the numerator actually has to converge to a positive value, namely $\langle \tilde y_T, \adjtar \rangle_X > 0$. In particular, we have $\adjtar \neq 0$ and hence, again by~\eqref{obs}, $L_T^\ast \adjtar \neq 0$.

By weak lower semicontinuity of the norm and given that $L_T^\ast \adjtar^n \rightharpoonup L_T^\ast \adjtar$ in $E$, we find
\[ \frac{\langle \tilde y_T, \adjtar \rangle_X}{\|L_T^\ast \adjtar\|_E} \geq \limsup \frac{\langle \tilde y_T, \adjtar^n \rangle_X}{\|L_T^\ast \adjtar^n\|_E} = c^\star,\]
showing that $\adjtar$ (or more precisely, $\tfrac{\adjtar}{\|\adjtar\|_X}$) reaches the supremum $c^\star$.

\end{proof}

%\subsubsection{Heat equation with signed controls}
%\`A voir...
%Consider the case of the heat equation with Dirichlet boundary conditions and internal control 
%\[y_t - \Delta y = \chi_\omega u, \quad y_{\partial \Omega} = 0.\]
%Assuming that $\Omega$ is a smooth domain, this corresponds to choosing $X = U = L^2(\Omega)$, $A = \Delta$ with $\mathcal{D}(A) = H^2(\Omega) \cap H^1_0(\Omega)$, $B = \chi_{\omega}$.
\subsection{A finite-dimensional example}
\label{subsec-toy-example}
\subsubsection{Setting}
We are in the case where $X = \R^3$, $U = \R^2$, with 
\begin{equation}
\label{toy-matrices} 
A = \begin{pmatrix}
1 & 2 & 0\\
1 & -1 & 2 \\
1 & 1 & -1
\end{pmatrix},  \qquad B = \begin{pmatrix}
1 & 0 \\
-1 & 1 \\
0 & 0 \\
\end{pmatrix}
\end{equation}
It is easily seen that the pair $(A,B)$ satifies Kalman's rank condition. Hence for any $y_0, \tar \in X$ and $T>0$, $\tar$ is exactly reachable from $y_0$ in time $T$ (in the absence of constraints).

Now consider the following cone as a constraint set 
\begin{equation}
\label{toy-constraint}
P:= \{(u_1, u_2) \in \R^2, \;  u_1 \geq 0 \text{ or } u_1 \leq 0, \; u_2 = 0\}.
\end{equation}

We generate the cone $P$ by intersecting with the unit ball of $\R^2$, that is, we set
\[\U:= P \cap \overline{B}(0,1).\]
The resulting set is not convex, hence we form the relaxed constraint set $\U_r$, which is given by 
\[\U_r = \left(\overline{B}(0,1) \cap \{(u_1, u_2) \in \R^2, \, u_1 \geq 0\}\right) \cup \left(\overline{B}_1(0,1) \cap \{(u_1, u_2) \in \R^2, \, u_1 \leq 0\}\right),\]
where we use the shorthand notation $\overline{B}_p(0,1)$ for the unit ball associated to the $\ell^p$ norm.

Note that the cone $P_r$ generated by $\U_r$ is the whole of $\R^2$. Hence, for any $y_0, \tar \in X$ and $T>0$, $\tar$ is exactly reachable from $y_0$ in time $T$ under the constraints $P_r$, since this amounts to not having constraints at all. These sets are illustrated by Figure~\ref{fig:toy_example}.

\begin{figure}[h]
     \centering
     \begin{subfigure}[b]{0.42\textwidth}
         \centering
         \includegraphics[width=\textwidth]{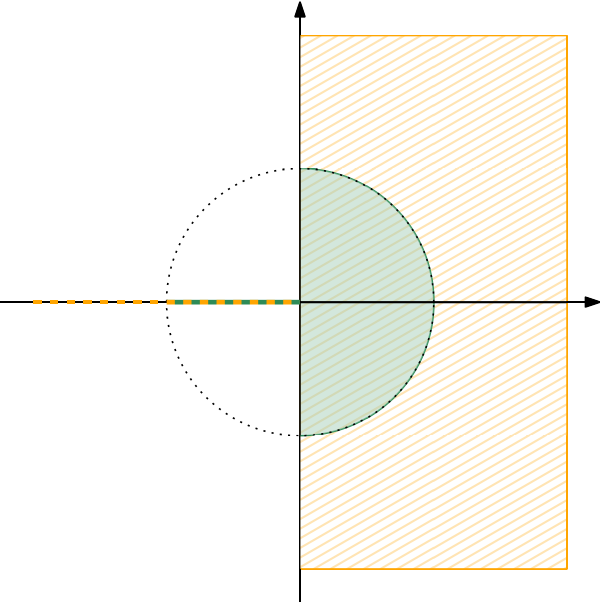}
     \end{subfigure}
     \hfill
     \begin{subfigure}[b]{0.42\textwidth}
         \centering
         \includegraphics[width=\textwidth]{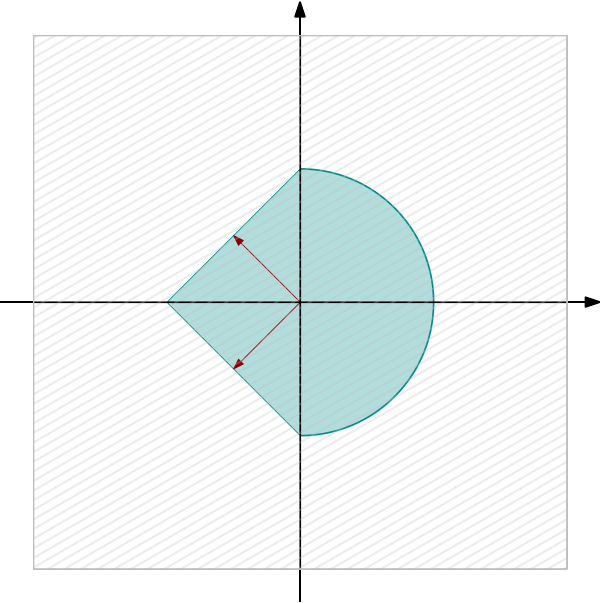}
     \end{subfigure}
     \hfill
        \caption{The left figure shows the constraint cone $P$ (hatched pale orange area) with the set $\U$ chosen to generate it (in green). The right figure shows the relaxed constraint set  $\U_r$ (in cyan). The cone $P_r$ it generates is the whole of $\R^2$ (the hatched area in gray). The two vectors (in magenta) generate the cone of singular vectors $\sing(\U_r)$.}
        \label{fig:three graphs}
    \label{fig:toy_example}
\end{figure}

\subsubsection{Functional and optimality conditions}
We focus on on exact reachability, hence we set $\e=0$.
First, we compute the gauge and support functions of $\U_r$:
\[\forall u \in \R^2, \qquad j_{\U_r}(u) = 
\begin{cases} 
\|u\|_1 &\text{ if } u_1 \leq 0 \\
\|u\| &\text{ if } u_1 > 0\\
\end{cases}
,\qquad 
\sigma_{\U_r}(u) = 
\begin{cases} 
\|u\|_\infty &\text{ if } u_1 \leq 0 \\
\|u\| &\text{ if } u_1 > 0\\
\end{cases}.
\]

These computations being made, they completely define the primal and dual optimisation problems. Now let us make the optimality condition~\eqref{opt_control} more explicit. It easily seen that $\sing(\U_r)$ is given by two half-lines, generated by $f_1 := (-1,1)$ and $f_2:= (-1,-1)$.
If $q \notin \sing(\U_r)$, the inclusion  $u \in  \sigma_{\U_r}(q) \, \partial \sigma_{\U_r}(q)$ rewrites as follows:  
\begin{equation}
\label{optimality-toy} 
\text{if } q_1 >0, \; u = q, \qquad \text{and} \qquad \text{if } q_1\leq 0, \; u = \|q\|_\infty \; \begin{cases} (0,1) &\text{ if }\, q_2 >  -q_1 \\
(-1,0) &\text{ if } \, q_1 < q_2 < -q_1\\
(0,-1) & \text{ if } \, q_2 < q_1
\end{cases}.
\end{equation}

\subsubsection{Exact reachability}
Now we analyse exact reachability under the constraints $P$, and we do so by applying Theorem~\ref{thm-main-exact} to prove exact reachability (and the fact that controls achieving the target may be obtained by minimising the corresponding defined functional). First note that the relaxed constraint set $\U_r$ clearly satisfies~\eqref{regular_generating_set}, so any reference to the set $L^2_{\U_r}$ is unnecessary when dealing with exact reachability, i.e. $L^2_{\U_r} = E$.

Since Kalman's condition is satisfied, the observability inequality~\eqref{obs} holds and by Proposition~\ref{obs-c2}, we infer that~\eqref{cond-exact} holds as well and $c^\star$ is attained. Furthermore, since $\U$ is closed, we know that~\eqref{extremality} holds.

All that is left to do is to prove that~\eqref{extremal} holds. In order to do so, we use Proposition~\ref{suff-cond-extremal}, which we should apply to (well-chosen subsets of) the set $\sing(B \U_r) \cap ((B P_r)^{\circ})^c$. In fact, it will be enough to consider $\sing(B \U_r)$, which we now compute.

Recalling that $p \in \R^3$ is in $\sing(B \U_r)$ if and only if $B^\ast p\in \sing(\U_r)$, we find that
\[\sing(B\U_r) = \{p \in \R^3, \; p_1 = 0, p_2\geq 0\} \cup  \{p \in \R^3, \; p_1=2p_2, \, p_1\leq 0\}.\]

We apply Proposition~\ref{suff-cond-extremal}: at this stage, it is important not to use it directly at the level of $K = \sing(B \U_r)$, simply because $\mathrm{Ran}(K) = \R^3$, hence $K^\perp = \{0\}$ and there will be no non-zero vector $b \in K^\perp$. Instead, we denote \[K_1 :=\{p \in \R^3, \,  p_1 = 0, \, p_2\geq0\}, \quad K_2 := \{p \in \R^3, \, p_1=2p_2, \, p_1\leq 0\}\]
so that $\sing(B\U_r) = K_1 \cup K_2$, and we apply~Proposition~\ref{suff-cond-extremal} to the two sets $K_1$ and $K_2$ separately. 

For $b_1 = (1,0,0) \in K_1^\perp$, we find that $(b_1, Ab_1, A^2 b_1)$ is a family of rank $3$. Similarly for $b_2 = (1,-2,0) \in K_2^\perp$, we find that $(b_2, Ab_2, A^2 b_2)$ is a family of rank $3$.

Consequently, applying Theorem \ref{thm-main-exact}, we have proved:
\begin{prpstn} 
Consider the linear control system defined by the matrices $(A,B)$ given in \eqref{toy-matrices}. Then, whatever $y_0$, $\tar$ in $\R^3$, $T>0$ are, $\tar$ is exactly reachable from $y_0$ in time $T$ under the constraints $P$ given by~\eqref{toy-constraint}.    

Furthermore, let $\adjtar^\star$ be any minimiser of $J_0$ on $\R^n$. Then, letting $q(t) = L_T^\ast \adjtar^\star$, we have $q(t) \notin \sing(\U_r)$ for a.e. $t \in (0,T)$. The unique control defined by the formula $u(t) \in \sigma_{\U_r}(q(t))\,  \partial \sigma_{\U_r}(q(t))$ according to~\eqref{optimality-toy} steers $y_0$ to $\tar$ in time $T>0$, and takes values in $P$.
\end{prpstn}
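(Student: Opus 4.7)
The plan is to apply Theorem~\ref{thm-main-exact} directly. Four things need to be in place: (i) that the additional condition $u \in L^2_{\U_r}$ is vacuous here, (ii) that the sufficient reachability condition~\eqref{cond-exact} holds and $c^\star$ is attained, (iii) that the extremality condition~\eqref{extremality} on the generating set is satisfied, and (iv) that the uniqueness condition~\eqref{extremal} on the adjoint trajectory $q^\star = L_T^\ast \adjtar^\star$ holds. Items (i)–(iii) should be handled in a few lines; the core of the argument, and the main obstacle, is (iv).

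For (i), $\U_r$ obviously contains a Euclidean ball around $0$ in $\R^2$, so that~\eqref{regular_generating_set} holds and $L^2_{\U_r} = E$. For (ii), the hypothesis that $(A,B)$ satisfies Kalman's rank condition yields the observability inequality~\eqref{obs} for the adjoint system at any time $T>0$; Proposition~\ref{obs-c2} then delivers both $c^\star < +\infty$ and attainment of $c^\star$ for every $y_0, \tar$. For (iii), since $\U = P \cap \overline{B}(0,1)$ is finite-dimensional and closed, $\ext(\U_r) \subset \U$ follows directly from Milman's theorem.

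The hard part is (iv), the exclusion of the adjoint trajectory from $\sing(\U_r)$ (modulo the polar cone, but since $P_r = \R^2$ here we may as well avoid $\sing(B\U_r)$ entirely). I would invoke Proposition~\ref{suff-cond-extremal}, after computing that $\sing(B\U_r)$ is the union $K_1 \cup K_2$ of two half-planes obtained as the preimages under $B^\ast$ of the two half-lines $\R_+ f_1$ and $\R_+ f_2$ generating $\sing(\U_r)$. Applying the proposition to $K = \sing(B\U_r)$ directly would fail because $\mathrm{Ran}(K) = \R^3$, so $K^\perp = \{0\}$; the key observation is that it suffices to apply the proposition separately to $K_1$ and $K_2$, since a union of two null-measure time sets is still null-measure. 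For $K_1$, pick $b_1 = (1,0,0) \in K_1^\perp$ and check that $(b_1, Ab_1, A^2 b_1)$ spans $\R^3$; analogously for $K_2$, pick $b_2 = (1,-2,0) \in K_2^\perp$ and verify the same rank condition. Both verifications amount to a routine $3\times 3$ determinant from the explicit matrix $A$ in~\eqref{toy-matrices}.

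With (i)–(iv) in hand, Theorem~\ref{thm-main-exact} yields exact reachability, the uniqueness of the control produced by~\eqref{opt_control}, and the fact that this control takes values in the original cone $P$. The explicit form of the control is then read off formula~\eqref{optimality-toy}, using $q(t) = B^\ast p(t)$ where $p$ solves the backward adjoint equation~\eqref{backward} with terminal data $\adjtar^\star$. The only subtlety worth flagging is the decomposition step in (iv): it is the natural way to circumvent the observation, explicitly warned about after Proposition~\ref{suff-cond-extremal}, that the proposition is blind to anything beyond the closure of the span of $K$.
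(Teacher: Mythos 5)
Your proposal is correct and follows essentially the same route as the paper: the same four ingredients (triviality of $L^2_{\U_r}$ via~\eqref{regular_generating_set}, Kalman plus Proposition~\ref{obs-c2} for~\eqref{cond-exact} and attainment of $c^\star$, closedness of $\U$ for~\eqref{extremality}, and Proposition~\ref{suff-cond-extremal} for~\eqref{extremal}), including the identical decomposition $\sing(B\U_r)=K_1\cup K_2$ and the same test vectors $b_1=(1,0,0)$ and $b_2=(1,-2,0)$. Nothing is missing.
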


\subsection{Sparse controls in finite dimension}
We are in the finite dimensional case $X = \R^n$, $U =\R^m$. Here, we apply our general methodology to the case where controls must be $k$-sparse for some $1 \leq k \leq m-1$, \ie at most $k$ components of $u$ must be active at almost all times $t \in (0,T)$.  We focus on exact reachability.

As will be seen, the work~\cite{zuazua2010switching} about so-called \textit{switching controllers} is a particular case of this general framework for $k=1$.

\subsubsection{Setting}
%We fix $1 \leq k \leq m-1$.
Given $y_0, \tar, T$, the goal is to steer $y_0$ to $\tar$ with controls that are $k$-sparse, namely 
\begin{equation}
\label{k-sparse}
\text{ for a.e. } t \in (0,T), \quad
\|u(t)\|_0 \leq k
%\forall j \neq k, \;  \text{ for a.e. } t \in (0,T), \quad  u_j(t) u_k(t) = 0, 
\end{equation}
where the $\ell^0$ (semi-)norm $\|\cdot\|_0$ refers to the number of non-zero components of a given vector. 

We correspondingly define the constraint set
\[P^{(k)}:= \{u \in \R^m, \; \|u\|_0 \leq k\}, \]
%\[P:= \{u \in \R^m, \; \forall j \neq k, \, u_j u_k = 0\} \]
which is obviously a closed cone. This cone is non-convex since $k \leq m-1$.  

We may now generate the cone $P^{(k)}$ as follows: %denoting $(e_j)_{1 \leq j \leq m}$ the canonical basis of $\R^m$, we have $P = \cone(\U)$ with 
%\[\U:=\{0\} \cup \big\{\pm e_j, \; j \in\{1, \ldots m\}\big\}.\]
\[\U^{(k)}:= P^{(k)} \cap \overline{B}_\infty(0,1) = \{u \in \R^m, \; \|u\|_0 \leq k, \, \|u\|_\infty \leq 1\}.\]
$\U^{(k)}$ is not convex but it is closed, hence~\eqref{extremality} holds.

\begin{rmrk}
One could also generate $P^{(k)}$ by intersecting it with the unit ball associated to the Euclidean norm $\|\cdot\|_2$. This would make the convex hull $\U^{(k)}_r$ and the set of its singular normal vectors $\sing(\U^{(k)}_r)$ less tractable, while not making the latter set substantially smaller~\cite{argyriou2012sparse}.
%on c \[\U_r^{(k)} = \{u \in \R^m,\; N_k(u) \leq  1\},\]
%avec la norme associée \[N_k(u) := \inf \bigg\{\sum_{|I| \leq k} \|v_I\|_2, \; \mathrm{supp}(v_I) \subset I, \, u = \sum_{|I| \leq k} v_I \bigg\}, \]
%ainsi que \[\sigma^2_{\U_r^{(k)}}(u) = \sum_{i=1}^k |u_{(i)}|^2,\]
%\ie $\sigma_{\U_r^{(k)}}(u)$ est la norme $\ell^2$ du vecteur $(u_{(1)}, \ldots u_{(k)}) \in \R^k$ (cf notations plus bas). En particulier, $\sigma_{\U_r^{(k)}}(u)$ donne la norme $\ell_\infty$ pour $k=1$, et la norme $\ell_2$ pour $k=m$.
\end{rmrk}

\subsubsection{Functional}
For an account of some of the basic results used here, we refer to~\cite{hiriart-poly-norms}.
Following our general method, we first compute the relaxed constraint set associated to $\U^{(k)}$, which is given by
\[\U^{(k)}_r = \{u \in \R^m,\; \|u\|_\infty \leq  1, \, \|u\|_1 \leq k \},\]
which naturally appears as the unit ball for a norm. In particular, $j_{\U^{(k)}_r}$ is defined by 
\[\forall u \in \R^m, \quad j_{\U^{(k)}_r}(u)= \max\Big(\frac{\|u\|_1}{k}, \|u\|_\infty\Big).\]  
We also note that the cone $P_r^{(k)}$ generated by $\U^{(k)}_r$ is the whole of $\R^m$, whatever the value of $1 \leq k \leq m-1$, and that $\U_r^{(k)}$ satisfies~\eqref{regular_generating_set}, meaning that we may drop the reference to any $L^2_{\U^{(k)}_r}$ regularity altogether since this set equals $E$.

We are now ready to compute the corresponding primal and dual functionals; since we focus on exact reachability, we set $\e = 0$.

For a vector $u \in \R^m$ we shall denote $|u_{(1)}|\geq   |u_{(2)}|\geq  \ldots \geq |u_{(m)}|$ by reordering the components of $|u|$ in a decreasing way. Then one has (see \cite{hiriart-poly-norms})
\[\sigma_{\U^{(k)}_r}(u) = \sum_{i=1}^k |u_{(i)}|,\]
that is, $\sigma_{\U^{(k)}_r}(u)$ is the $\ell^1$ norm of the vector $(u_{(1)}, \ldots u_{(k)}) \in \R^k$. As expected, $\sigma_{\U^{(k)}_r}(u)$ boils down to the $\ell^\infty$ norm for $k=1$ (and to the $\ell^1$ norm for $k=m$).

%Since $\adhco(\U)$ is the unit ball associated to the $\ell^1$ norm, we readily obtain that its gauge $j_{\adhco(\U)}$ is the $\ell^1$ norm, and $\sigma_{\adhco(\U)}$ is the $\ell^\infty$ norm. 

All in all, the cost associated to the optimal problem is given by 
\[\forall u \in E, \quad F(u) =  \tfrac{1}{2}\int_0^T j^2_{\U^{(k)}_r}(u(t)) \,dt.\]
%\[\forall u \in E, \quad F(u) =  \tfrac{1}{2}\int_0^T \|u(t)\|_1^2 \,dt.\]
For $p_f\in \R^n$, generically denoting $p(t) = S_{T-t}^\ast \adjtar$, the functional underlying the dual problem is 
\begin{align}
\begin{split}
\label{dual-sparse}
    J_{0}(\adjtar) = 
\tfrac{1}{2}\int_0^T \sigma_{\U^{(k)}_r}^2(L_T^\ast \adjtar(t)) \,dt - \langle \tilde y_T, \adjtar\rangle_{\R^n} 
%\\
%&
=\tfrac{1}{2} \int_0^T \Big(\sum_{i=1}^k %|\langle b_{(i)}, p(t)\rangle_{\R^n}|
|(B^\ast p(t))_{(i)}|
\Big)^2 dt -\langle \tilde y_T, \adjtar\rangle_{\R^n}.
\end{split}
\end{align}
%with the slight abuse of notation that $\langle b_{(i)}, p(t)\rangle$ refers to $w_{(i)}$ for the vector $w=(\langle b_j, p(t)\rangle_{\R^n})_{1 \leq j \leq m}$.
%\begin{align*}
%    J_{0}(\adjtar)& = 
%\tfrac{1}{2}\int_0^T \|L_T^\ast \adjtar(t)\|_\infty^2 \,dt - \langle \tilde y_T, \adjtar\rangle_{\R^n} \\
%& 
%= \tfrac{1}{2} \int_0^T \max\big(|\langle b_1, p(t)\rangle_{\R^n}|^2, \ldots, |\langle b_m, p(t)\rangle_{\R^n}|^2 \big) \,dt -\langle \tilde y_T, \adjtar\rangle_{\R^n},
%\end{align*}
The above dual functional is exactly  the one introduced in~\cite{zuazua2010switching} in the case $k=1$. 
\subsubsection{Results}
Now let us apply our Theorem~\ref{thm-main-exact}, and compare to the main result Theorem 2.3 of~\cite{zuazua2010switching} for $k=1$. %First, note that~\eqref{extremality} holds since $\U$ is closed and we are in finite dimension.  

Recall that the cone of relaxed constraints is $P_r^{(k)} = \R^m$, and we already know that~\eqref{extremality} holds. As a result, if
\begin{itemize}
    \item $\tar$ is exactly reachable from $y_0$ to $\tar$ in time $T>0$,
    \item $c^\star$ is attained,
    \item assumption \eqref{extremal} holds,
\end{itemize}
then $\tar$ is exactly reachable from $y_0$ in time $T$ under the switching constraint~$P^{(k)}$ given by~\eqref{k-sparse}.

Now let us discuss a sufficient condition for the above three hypotheses to hold. Of course, the first one is true in particular if controllability (without constraints) is satisfied, \textit{i.e.}, if the pair $(A,B)$ satisfies Kalman's rank condition. 
In view of Proposition~\ref{obs-c2}, $c^\ast$ is attained under that same assumption.

In order to find a sufficient condition for the third condition~\eqref{extremal} to hold, we examine~\eqref{extremal_bis}. 
\begin{lmm}
    For all $k \leq m-1$, there holds
\begin{align*} \sing(\U_r^{(k)})
 = \{u \in \R^m,\;\, |u_{(k)}| = |u_{(k+1)}|\}.
\end{align*}
Furthermore, if $u \notin \sing(\U_r^{(k)})$, the unique $v \in \argmax_{v \in \U^{(k)}_r} \langle u, v \rangle$ is given by \[v = \mathrm{sign}(u) \mathbf{1}_{J(u)}, \qquad J(u) := \{j \in \{1, \ldots, m\}, \,|u_j| > |u_{(k+1)}|\}.\]
\[v_k=\begin{cases}
    \mathrm{sign}(u_k) \ &\textrm{if} \ k\in J(u),\\
    0 \ &\textrm{otherwise}
\end{cases}\]
\end{lmm}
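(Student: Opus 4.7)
The strategy is to characterize $\argmax_{v \in \U_r^{(k)}} \scl{u}{v}$ through a two-stage chain of inequalities whose equality cases simultaneously yield the criterion for uniqueness (i.e., the formula for $\sing(\U_r^{(k)})$) and the explicit formula for the unique maximizer.

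Fix $u \in \R^m$ and set $\lambda := |u_{(k+1)}|$. Starting from the sign-matching bound $\scl{u}{v} \leq \sum_j |u_j||v_j|$ (with equality iff $\mathrm{sign}(v_j)=\mathrm{sign}(u_j)$ whenever $u_j \neq 0$), I would use the decomposition
\[\sum_j |u_j||v_j| = \sum_j(|u_j|-\lambda)|v_j| + \lambda\sum_j|v_j|\]
and bound each piece using $|v_j|\leq 1$ on $\{|u_j|>\lambda\}$, the fact that $(|u_j|-\lambda)|v_j|\leq 0$ on $\{|u_j|<\lambda\}$, and $\sum_j|v_j|\leq k$, to obtain $\scl{u}{v} \leq \sum_j(|u_j|-\lambda)^+ + k\lambda = \sum_{i=1}^k|u_{(i)}| = \sigma_{\U_r^{(k)}}(u)$. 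Tracking equality in each step then yields three necessary conditions: (a) $|v_j|=1$ on $J(u)=\{|u_j|>\lambda\}$; (b) $v_j=0$ on $\{|u_j|<\lambda\}$; and (c) $\sum_j|v_j|=k$ whenever $\lambda>0$.

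A case split finishes the argument. If $|u_{(k)}|>|u_{(k+1)}|$, then $|J(u)|=k$, and conditions (a)--(c) together with sign matching pin down $v$ uniquely as $\mathrm{sign}(u)\mathbf{1}_{J(u)}$; consequently $u\notin\sing(\U_r^{(k)})$ and the formula is established. If instead $|u_{(k)}|=|u_{(k+1)}|$, then $|J(u)|\leq k-1$, and I would distinguish two sources of freedom: when $\lambda>0$, the ``tie'' set $E=\{|u_j|=\lambda\}$ contains at least $(k+1)-|J(u)|\geq 2$ indices while only the single linear condition $\sum_{j\in E}|v_j|=k-|J(u)|$ is imposed on them; when $\lambda=0$, condition (c) becomes vacuous and the zero set $Z=\{u_j=0\}$ has size at least $m-k+1\geq 2$, with $v$ unconstrained on $Z$ beyond the $\ell^1$ budget. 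In either regime, multiple maximizers exist, so $u\in\sing(\U_r^{(k)})$.

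The main subtlety to navigate is the degenerate case $\lambda=0$: the $\ell^1$-saturation condition (c) dissolves, so non-uniqueness must be recovered directly from the coordinates where $u_j=0$, which is precisely where the standing hypothesis $k\leq m-1$ is used to guarantee $|Z|\geq 2$. Outside of that subtlety, everything is a careful bookkeeping of the equality cases in a classical rearrangement-type bound, and no serious obstacle is expected.
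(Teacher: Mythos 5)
Your proof is correct, and it follows the same overall dichotomy as the paper's (the case $|u_{(k)}|>|u_{(k+1)}|$ versus $|u_{(k)}|=|u_{(k+1)}|$), but the mechanism is genuinely different and in one direction more complete. The paper merely asserts that when $|u_{(k)}|>|u_{(k+1)}|$ the maximiser is ``easily seen'' to be unique and equal to $\mathrm{sign}(u)\mathbf{1}_{J(u)}$, and proves the converse by exhibiting two explicit maximisers that move one unit of $\ell^1$ mass between two tied indices. You instead extract a full characterisation of $\argmax_{v\in\U_r^{(k)}}\scl{u}{v}$ from the decomposition $\sum_j|u_j||v_j|=\sum_j(|u_j|-\lambda)|v_j|+\lambda\sum_j|v_j|$ with $\lambda=|u_{(k+1)}|$, and read off both uniqueness and non-uniqueness from the equality cases; this supplies the argument the paper omits, identifies \emph{all} maximisers (not just two), and isolates cleanly the degenerate case $\lambda=0$, where the $\ell^1$-saturation condition (c) disappears and non-uniqueness must come from the zero set of $u$ via $k\le m-1$ --- precisely the case the paper handles by the ad hoc choice $v_{i_k}=w_{i_{k+1}}=1$. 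The price is extra bookkeeping, and two statements should be tightened: the equality case of $\scl{u}{v}\le\sum_j|u_j|\,|v_j|$ requires sign matching only where both $u_j\neq 0$ and $v_j\neq 0$ (harmless, since your conditions force $v_j=0$ elsewhere); and in the subcase $|u_{(k)}|>|u_{(k+1)}|=0$, condition (c) is vacuous, so the vanishing of $v$ outside $J(u)$ must be deduced from the constraint $\|v\|_1\le k$ being exhausted by $|v_j|=1$ on the $k$ indices of $J(u)$, which your argument does only implicitly. Neither point affects the validity of the proof.
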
 
\begin{proof}
Let us temporarily denote $S_k =\{u \in \R^m,\;\, |u_{(k)}| = |u_{(k+1)}|\}$. Let $u \notin S$ be fixed; we shall prove that $u \notin \sing(\U_r^{(k)})$.  By assumption, there holds $|u_{(k)}| > |u_{(k+1)}|$. Let $v$ be a corresponding maximiser, that is $v$ such that 
\[\max_{v \in \U_k} \langle u, v \rangle = \sigma_{\U^{(k)}_r}(u) = \sum_{i=1}^k |u_{(i)}|.\] 
One easily sees that $v$ is uniquely determined (hence $u \notin \sing(\U_r^{(k)})$) and is given by $\mathrm{sign}(u) \mathbf{1}_{J(u)}$. 

Conversely, let $u \in S_k$, and let us prove that $u \in \sing(\U_r^{(k)})$. 
Let $i_1, \ldots, i_{k+1}$ be $k+1$ distinct indices in $\{1, \ldots,m\}$ such that $|u_{i_j}| = |u_{(j)}|$ for $j \leq k+1$. Then define $v$ and $w$ be such that $v_{i_j} = w_{i_j} = \mathrm{sgn}(u_{i_j})$ for $1 \leq j \leq k-1$, and $v_{i_k} = \mathrm{sgn}(u_{i_k})$, $w_{i_{k+1}} = \mathrm{sgn}(u_{i_{k+1}})$ if $u_{(k)} \neq 0$, or $v_{i_k} = 1$, $w_{i_{k+1}} = 1$ if $u_{(k)}= 0$. All the unmentioned components of $v$ and $w$ are taken to be equal to $0$. Clearly, we have $v, w \in \U_r^{(k)}$, and $v \neq w$ in both cases. These two different vectors satisfy 
\[\langle u, v \rangle =\langle u, w \rangle = \sigma_{\U^{(k)}_r}(u) = \sum_{i=1}^k |u_{(i)}|,\]
proving that $u \in \sing(\U_r^{(k)})$.
\end{proof}

Thanks to the Lemma, we may assert that assumption~\eqref{extremal_bis} is more explicitly given by 
\begin{equation}
    \label{H-sparse}
\forall \adjtar \neq 0, \quad \{t \in (0,T), \; |(B^\ast p(t))_{(k)}| = |(B^\ast p(t))_{(k+1)}|\},\; \text{ has measure $0$},
\end{equation} where $p(t) = S_{T-t}^\ast \adjtar$.%, and $\langle b, p(t)\rangle$ is a short-hand notation for to the vector $(\langle b, p(t)\rangle)_{1 \leq i \leq m}$. 

We arrive at the following result:
\begin{prpstn}
\label{cond_reachable_sparse}
Assume that $(A,B)$ satisfies Kalman's rank condition, and further that~\eqref{H-sparse} holds. Then for any $y_0$, $\tar$ in $\R^n$ and $T>0$, $\tar$ is exactly reachable from $y_0$ in time $T$ under the constraints~$P^{(k)}$ given by~\eqref{k-sparse}.

Furthermore, denoting $\adjtar^\star$ a minimiser of~\eqref{dual-sparse} and $p^\star(t) = S_{T-t}^\ast \adjtar^\star$,  the control defined for $t \in (0,T)$ by
\[u^\star(t) :=  \bigg(\sum_{i \in J(B^\ast p^\star(t))} |(B^\ast p^\star(t))_i|\bigg)  \, \mathrm{sign}(B^\ast p^\star(t))  \, \mathbf{1}_{J(B^\ast p^\star(t))}\]
is $k$-sparse and drives $y_0$ to $\tar$ in time $T$.
\end{prpstn}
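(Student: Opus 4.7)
The plan is to apply Theorem~\ref{thm-main-exact} directly to the relaxed generating set $\U_r^{(k)}$, verifying its hypotheses in turn, and then extract the closed-form expression of $u^\star$ from the general optimality condition \eqref{opt_control} using the preceding Lemma.

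First, the reachability condition \eqref{cond-exact} and the attainment of $c^\star$ come for free. Kalman's rank condition gives unconstrained exact controllability, equivalently the observability inequality \eqref{obs}; Proposition~\ref{obs-c2} then yields $c^\star < +\infty$ together with its attainment, so $J_0$ admits a minimizer $\adjtar^\star$. The technical assumption on $L^2_{\U_r^{(k)}}$ is vacuous: since $\U_r^{(k)}$ contains a neighborhood of $0$ in $\R^m$ it satisfies \eqref{regular_generating_set}, so $L^2_{\U_r^{(k)}} = E$. The extremality assumption \eqref{extremality} holds because $\U^{(k)}$, a finite union of $k$-dimensional coordinate subspaces intersected with the closed unit $\ell^\infty$-ball, is closed, as was already noted just after its definition.

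The one substantive verification is \eqref{extremal}, and this is exactly what \eqref{H-sparse} encodes. Since $P_r^{(k)} = \R^m$, one has $(P_r^{(k)\circ})^c = \R^m \setminus \{0\}$, so the cone appearing in \eqref{extremal_bis} is just $\sing(\U_r^{(k)}) \setminus \{0\}$. By the Lemma characterizing $\sing(\U_r^{(k)})$, this set is precisely $\{u \neq 0 : |u_{(k)}| = |u_{(k+1)}|\}$, and therefore \eqref{extremal_bis} applied to $B^\ast S_{T-t}^\ast \adjtar^\star$ is literally \eqref{H-sparse}, which we have assumed. Since \eqref{extremal_bis} implies \eqref{extremal}, Theorem~\ref{thm-main-exact} delivers a unique control satisfying \eqref{opt_control}, taking values in the original cone $P^{(k)}$ and steering $y_0$ to $\tar$ in time $T$.

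It remains to make the formula explicit. Setting $q^\star(t) := B^\ast p^\star(t)$, condition \eqref{opt_control} reads $u^\star(t) = \sigma_{\U_r^{(k)}}(q^\star(t))\, v^\star(t)$ with $v^\star(t) \in \argmax_{v \in \U_r^{(k)}} \langle q^\star(t), v\rangle$. For almost every $t$, \eqref{H-sparse} ensures $q^\star(t) \notin \sing(\U_r^{(k)})$; the Lemma then gives uniquely $v^\star(t) = \mathrm{sign}(q^\star(t))\,\mathbf{1}_{J(q^\star(t))}$ and $\sigma_{\U_r^{(k)}}(q^\star(t)) = \sum_{i=1}^k |q^\star(t)_{(i)}| = \sum_{i \in J(q^\star(t))} |q^\star(t)_i|$, from which the announced product formula follows; the $k$-sparsity is immediate since $|J(q^\star(t))| = k$ whenever $|q^\star(t)_{(k)}| > |q^\star(t)_{(k+1)}|$. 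There is no serious obstacle in this proof: everything substantial has been packaged into Theorem~\ref{thm-main-exact}, Proposition~\ref{obs-c2}, and the structure lemma for $\sing(\U_r^{(k)})$. The only point that demands care is the recognition that \eqref{H-sparse} is the concrete incarnation of the abstract avoidance condition \eqref{extremal_bis} in this example.
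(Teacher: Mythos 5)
Your proof is correct and follows essentially the same route as the paper: Kalman's condition gives the observability inequality, Proposition~\ref{obs-c2} yields attainment of $c^\star$, closedness of $\U^{(k)}$ gives \eqref{extremality}, \eqref{H-sparse} is the concrete form of \eqref{extremal_bis} (hence \eqref{extremal}), and the explicit formula for $u^\star$ follows from \eqref{opt_control} together with the lemma identifying the unique maximiser off $\sing(\U_r^{(k)})$. The only (harmless) nuance is that \eqref{H-sparse} is in fact slightly stronger than \eqref{extremal_bis}, since $0\in\sing(\U_r^{(k)})$ is excised by the intersection with $(P_r^{\circ})^c$ but not by \eqref{H-sparse}; the paper glosses over this in the same way.
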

%there exists a unique $j = j(t) \in \{1, \ldots, m\}$ such that $|\langle b_j, p^\star(t)\rangle| =\|(\langle b_j, p^\star(t)\rangle)_{1 \leq j \leq n}\}\|_\infty$, so that $u_k(t) = 0$ for all $k \neq j$ and $u_j(t) = \langle b_j, p^\star(t)\rangle$. 
%\begin{equation}
%\label{switching-opt-control}
%    \text{ for a.e. } t \in (0,T),\; \forall j  \in \{1, \ldots, m\},  \quad  u_j(t) = \langle b_j, p^\star(t)\rangle \; \mathbf{1}_{\{|\langle b_j, p^\star(t)\rangle| > \max_{k \neq j}|\langle b_k, p^\star(t)\rangle|\}}.
%\end{equation}

Let us discuss the assumption underlying Proposition~\ref{cond_reachable_sparse}.
It will be convenient to write $B = (b_1, \ldots, b_m)$ with $b_j \in \R^n$, so that $(B^\ast p)_i = \langle b_i, p \rangle$ for $i \in \{1, \ldots, m\}$ and $p \in \R^n$.
%in which case 
%the set $\{t \in (0,T), \; |(B^\ast p(t))_{(k)}| = |(B^\ast p(t))_{(k+1)}|\}$ becomes $\{t \in (0,T), \; |(B^\ast p(t))_{(k)}| = |(B^\ast p(t))_{(k+1)}|\}$.

With this notation in place, the assumption of interest
is clearly satisfied if the $\tfrac{1}{2}m(m-1)$ sets (indexed by $1 \leq j,\ell \leq m$ with $j \neq \ell$) $\{t \in (0,T), \; |\langle b_i, p(t) \rangle| = |\langle b_j, p(t) \rangle| \}$ are all of measure $0$ whatever $\adjtar \neq 0$ and $T>0$ are. If so, the assumption is verified for any $1 \leq k \leq m-1$. As remarked in~\cite{zuazua2010switching}, this unique continuation property is satisfied as soon as 
\begin{equation}
\label{strong_kalman}
\forall \; 1 \leq j \neq \ell \leq m, \quad (A, b_j \pm b_\ell) \text{ satifies Kalman's rank condition},
\end{equation}
which in fact is (much) stronger than the assumption that the pair $(A,B)$ satisfies Kalman's rank condition.  

We arrive at the following corollary giving a sufficient strong condition for exact reachability under the constraints $P^{(1)}$ (and hence $P^{(k)}$ for all $1 \leq k \leq m$).
\begin{crllr}
Assume that the condition~\eqref{strong_kalman} holds. Then for any $y_0$, $\tar$ in $\R^n$ and $T>0$, $\tar$ is exactly reachable from $y_0$ in time $T$ under the constraints $P^{(1)}$. 
\end{crllr}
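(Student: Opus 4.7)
The plan is to apply Proposition~\ref{cond_reachable_sparse} with $k=1$, so I need to verify its two hypotheses: Kalman's rank condition for the pair $(A,B)$, and the unique continuation property~\eqref{H-sparse}.

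The first hypothesis is essentially free. Fix any pair $j \neq \ell$: by~\eqref{strong_kalman}, the single-input pair $(A, b_j + b_\ell)$ satisfies Kalman's rank condition, so the span of $\{A^i(b_j+b_\ell)\}_{i \geq 0}$ equals $\R^n$. Since this span is contained in the reachable subspace of $(A,B)$, the latter equals $\R^n$ as well.

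For~\eqref{H-sparse} with $k=1$, the condition requires, for every $\adjtar \neq 0$, that the set of $t \in (0,T)$ where $|(B^\ast p(t))_{(1)}| = |(B^\ast p(t))_{(2)}|$ has measure zero, with $p(t) = S_{T-t}^\ast \adjtar$. As already noted in the text preceding~\eqref{strong_kalman}, this set is contained in the finite union, over pairs $j \neq \ell$, of
\[E_{j,\ell} := \{t \in (0,T), \; |\langle b_j, p(t)\rangle| = |\langle b_\ell, p(t)\rangle|\},\]
so it suffices to show each $E_{j,\ell}$ has measure zero. Using the factorisation $|x|^2-|y|^2 = (x-y)(x+y)$, one sees that $E_{j,\ell}$ equals the union of the zero sets of the two functions $t \mapsto \langle b_j - b_\ell, p(t)\rangle$ and $t \mapsto \langle b_j + b_\ell, p(t)\rangle$. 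Both are real-analytic on $(0,T)$, since $p(t) = e^{(T-t)A^\ast}\adjtar$ depends analytically on $t$.

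The only remaining, and main, obstacle is to rule out identical vanishing of these analytic functions, which would invalidate the ``isolated zeros'' argument. Suppose $\langle b_j - b_\ell, p(t)\rangle \equiv 0$ on $(0,T)$. Differentiating $i$ times and evaluating at $t = T$ yields $\langle (A^\ast)^i(b_j - b_\ell), \adjtar\rangle_{\R^n} = 0$ for every $i \geq 0$; by Kalman's rank condition for $(A, b_j - b_\ell)$ assumed in~\eqref{strong_kalman}, the family $\{(A^\ast)^i(b_j-b_\ell)\}_{0 \leq i \leq n-1}$ spans $\R^n$, forcing $\adjtar = 0$, a contradiction. The same argument applied to $(A, b_j + b_\ell)$ handles $\langle b_j + b_\ell, p(t)\rangle$. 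Each of the two analytic functions thus has only isolated zeros in $(0,T)$, so $E_{j,\ell}$ is of measure zero. Condition~\eqref{H-sparse} for $k=1$ is then established, and Proposition~\ref{cond_reachable_sparse} yields exact reachability of $\tar$ from $y_0$ in time $T$ under the constraint $P^{(1)}$, together with the explicit $1$-sparse control described there.
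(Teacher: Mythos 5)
Your proposal is correct and follows exactly the route the paper takes: reduce to Proposition~\ref{cond_reachable_sparse} with $k=1$ and establish~\eqref{H-sparse} via the analyticity/unique-continuation argument that the paper merely attributes to the cited reference on switching controls; your write-up simply fills in those details. One bookkeeping slip: differentiating $t \mapsto \langle b, p(t)\rangle$ with $\dot p = -A^\ast p$ gives $(-1)^i\langle A^i b, p(t)\rangle$, so evaluating at $t=T$ yields $\langle A^i(b_j \pm b_\ell), \adjtar\rangle = 0$, not $\langle (A^\ast)^i(b_j \pm b_\ell), \adjtar\rangle = 0$; this is in fact what you want, since it is the family $\{A^i(b_j\pm b_\ell)\}_{0\leq i\leq n-1}$ that spans $\R^n$ under the Kalman hypothesis~\eqref{strong_kalman} for the pair $(A, b_j\pm b_\ell)$ (controllability of $(A,b)$ and of $(A^\ast,b)$ are not equivalent in general).
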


\subsection{Nonnegative and on-off shape control}
\subsubsection{Objective}
Here, we show how our methodology allows to recover the results of~\cite{Nous} for the approximate reachability of parabolic equations by shape controls, except that the latter work is concerned with a variation of the present technique, see subsection~\ref{subsec-ext-persp}.

Given $\Omega$ a smooth domain, we consider the control problem~\eqref{control} with $B = \mathrm{Id}$, $X = U = L^2(\Omega)$. Given $0<m_L \leq |\Omega|$, we are interested in the approximate reachability problem by means of so-called \textit{on-off shape controls}, \textit{i.e.}, controls that write for a.e. $t \in (0,T)$ as $u(t) = M(t) \chi_{\omega(t)}$ for some $M(t)>0$ and $\omega(t)$ a measurable set of measure $|\omega(t)| \leq m_L$.

Hence, we naturally set 
\begin{equation}
\label{shape-constraint}
\U:= \{\chi_{\omega}, \; |\omega| \leq m_L\},
\end{equation}
whose associated generated cone $P = \cone(\U)$ is the wanted constraint set. 

There is a direct analogy with the finite-dimensional case above with $k$-sparse controls, with the following additional difficulties 
\begin{itemize}
    \item we are in infinite dimension,
    \item there is a nonnegativity constraint on top of the sparsity one,
    \item controls must be constant on their support.
\end{itemize}
\subsubsection{Primal optimisation problem}

In this case, we compute 
\[\U_r = \left\{u \in L^2(\Omega), \; 0 \leq u\leq 1 \text{ and } \int_\Omega u \leq m_L\right\}.\]
We note that the corresponding generated cone $P_r = \cone(\U_r)$ is the set of (essentially bounded) non-negative controls, \textit{i.e.}, $P_r= \{u \in L^\infty(\Omega), \; u \geq 0\}$. Even though $\U_r$ is closed, $P_r$ is not.
What is more, it is easily seen that in this case, $\ext(\U_r) = \U$, hence~\eqref{extremality} holds.\footnote{Proof. If $u = \chi_{\omega}$ with $|\omega| \leq m_L$, then any decomposition $\chi_\omega = t v_1 + (1-t) v_2$ with $v_1, v_2 \in \U_r$, $0<t<1$ leads to $v_1 = v_2 = 1$ on $\omega$ and $v_1 = v_2 = 0$ on $\Omega \setminus \omega$, \textit{i.e.}, $v_1 = v_2 = \chi_\omega$, which shows that $\chi_\omega$ is extremal in $\U_r$. Conversely, assume by contradiction the existence of $u$ extremal in $\U_r$ but not of the form $\chi_{\omega}$ with $|\omega| \leq m_L$. Then we may find for $\delta>0$ small enough a measurable set $\omega_0$ with $0<|\omega_0| < |\Omega|$ such that $\delta \leq u \leq 1-\delta$ on $\omega_0$. We define $v_1$ and $v_2$  by $v_1 = u + \tfrac{\e}{|\Omega \setminus \omega_0|}$, $v_2 = u - \tfrac{\e}{|\Omega \setminus \omega_0|}$ on $\Omega \setminus \omega_0$, $v_1 = u-\tfrac{\e}{|\omega_0|}$, $v_2 = u+\tfrac{\e}{|\omega_0|}$ on $\omega_0$. These functions do satisfy $v_1, v_2 \in \U_r$ provided that $\e$ be chosen small enough. Furthermore, we have $u = \tfrac{1}{2}(v_1 + v_2)$, a contradiction with the extremality of $u$.}

Finally, computing the gauge of $\U_r$ leads to
\[ \forall u \in L^2(\Omega), \quad j_{\U_r}(u) =    \max \Big(\|u\|_{L^\infty(\Omega)}, \tfrac{\|u\|_{L^1(\Omega)}}{m_L}\Big) + \delta_{\{u \geq 0\}}.\]
The support function of $\U_r$ and its subdifferential, needed to define the dual functional $J_\e$ and to derive optimal controls through the optimality conditions~\eqref{opt_control}, are given in~\cite{Nous}.
%For simplicity and in order to align with~\cite{Nous}, we focus on the cost $F_\infty$, allowing us to obtain controls whose amplitude does not depend on $t \in (0,T)$. 
The corresponding cost is given for $u \in E$ by
\[F(u) = \tfrac{1}{2} \int_0^T \left(\max \Big(\|u(t)\|_{L^\infty(\Omega)}, \tfrac{\|u(t)\|_{L^1(\Omega)}}{m_L}\Big)^2 + \delta_{\{u(t) \geq 0\}}\right)\,dt.\]

\subsubsection{Approximate reachability in the relaxed set}
Now let us apply Theorem~\ref{thm-main-approx}, fixing $y_0, \tar \in L^2(\Omega)$ and $T>0$ such $\tar \geq S_T y_0$, \ie $\tilde y_T \geq 0$.

First, we check that $\tar$ is approximately reachable from $y_0$ in time $T$ under the constraint $P_r = \cone(\U_r) = \{u \in L^\infty(\Omega), \, u \geq 0\}$. To do so, we analyse condition~\eqref{cond-approx}. 

We let $\adjtar$ be such that $F(L_T^\ast \adjtar) = 0$, then we have $\sigma_{\U_r}(L_T^\ast \adjtar(t)) = 0$ for a.e. $t \in (0,T)$. It is easily seen that $\sigma_{\U_r}(z) =0 \implies z \leq 0$ on $\Omega$, without having to derive an expression for $\sigma_{\U_r}$. Hence we find $L_T^\ast \adjtar = e^{(T-t) A^\star} \adjtar \leq 0$ for a.e. $t \in (0,T)$. By continuity of the (adjoint) semigroup trajectory, putting $t=T$ in the above leads to $\adjtar \leq 0$. Hence, we indeed obtain
\[\scl{\tilde y_T}{\adjtar}_{L^2(\Omega)} =  \scl{\tar -S_T y_0}{\adjtar}_{L^2(\Omega)} \leq 0.\]

At this stage, note that no assumption whatsoever has been made about the operator $A$.

\subsubsection{Approximate reachability in the original set}
We continue applying Theorem~\ref{thm-main-approx} to obtain (approximate) reachability results in $P=\cone(\U)$, arriving at the following general result. 
\begin{prpstn}[\cite{Nous}]
Assume that $\tar \geq S_T y_0$. 
If the adjoint semigroup satisfies the property 
\[\forall \adjtar \neq 0, \; \quad \text{\quad the level sets of $S_t^\ast \adjtar$ have measure $0$ for a.e. $t>0$},\]
then $\tar$ is approximately reachable from $y_0$ in time $T$ under the constraint $P = \cone(\U)$ with $\U$ given by~\eqref{shape-constraint}.
\end{prpstn}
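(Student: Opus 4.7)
The plan is to apply Theorem~\ref{thm-main-approx} in the form that passes from approximate reachability under the relaxed cone $P_r = \cone(\U_r)$ to approximate reachability under the original cone $P = \cone(\U)$. The previous subsubsection already verified condition~\eqref{cond-approx} using only the assumption $\tar \geq S_T y_0$, and the footnote above established $\ext(\U_r) = \U$, i.e., condition~\eqref{extremality}. What remains is to check the extremality condition~\eqref{extremal} at the minimiser $\adjtar^\star$ of $J_\e$ (for each $\e>0$), namely that $L_T^\ast \adjtar^\star(t) = S_{T-t}^\ast \adjtar^\star \notin \sing(\U_r) \cap (P_r^\circ)^c$ for a.e.\ $t \in (0,T)$.

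The key step is to describe $\sing(\U_r)$ explicitly. For fixed $v \in L^2(\Omega)$, maximising $u \mapsto \langle v,u \rangle$ over $\U_r = \{0 \leq u \leq 1,\ \int_\Omega u \leq m_L\}$ is a standard bathtub-type problem: at an optimum one sets $u=0$ where $v$ is too small, $u=1$ where $v$ is large, with the threshold fixed by the measure budget $m_L$. A case split on whether $|\{v>0\}|$ is strictly less than, equal to, or strictly greater than $m_L$ shows that the maximiser is non-unique precisely in two scenarios: either $|\{v>0\}|<m_L$ and $|\{v=0\}|>0$ (so mass may be freely placed on the null set of $v$), or $|\{v>0\}|>m_L$ and $v$ has a level set of positive measure at the threshold value. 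In both scenarios, $v$ has a level set of positive measure. Equivalently, if every level set of $v$ has measure zero, then $v \notin \sing(\U_r)$.

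The hypothesis now yields~\eqref{extremal}: when $\adjtar^\star \neq 0$, the level sets of $v_t := S_{T-t}^\ast \adjtar^\star$ are of measure zero for a.e.\ $t \in (0,T)$, so $v_t \notin \sing(\U_r)$ and a fortiori $v_t \notin \sing(\U_r) \cap (P_r^\circ)^c$. The case $\adjtar^\star = 0$ is trivial because $0 \in P_r^\circ$, hence $0 \notin (P_r^\circ)^c$. Invoking Theorem~\ref{thm-main-approx} then produces, for every $\e>0$, a (unique) control $u_\e \in \partial F^\ast(L_T^\ast \adjtar^\star)$ taking values in $P = \cone(\U)$ and steering $y_0$ into $\overline{B}(\tar,\e)$ at time $T$. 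This is exactly approximate reachability from $y_0$ to $\tar$ under the on-off shape constraint.

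The main obstacle is the explicit computation of $\sing(\U_r)$: one must argue that both sources of non-uniqueness in the bathtub maximisation correspond to positive-measure level sets of $v$, so that the level-set hypothesis on the adjoint trajectory simultaneously rules them out. Everything else is a direct application of the abstract Theorem~\ref{thm-main-approx}, the reachability having already been reduced to this single verification.
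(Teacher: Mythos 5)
Your proposal is correct and follows essentially the same route as the paper: reduce to verifying~\eqref{extremal} (the paper actually establishes the stronger~\eqref{extremal_bis}), then observe via the bathtub principle that any $v \in \sing(\U_r)$ must have a level set of positive measure, so the hypothesis rules out singular normal vectors along the adjoint trajectory. The only difference is that you sketch the bathtub case analysis directly, whereas the paper cites it as Lemma 2.3 of~\cite{Nous}; your analysis is sound in the direction needed (non-uniqueness of the maximiser implies a positive-measure level set).
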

\begin{proof}
Since we have already shown approximate reachability in $\cone(\U_r)$ in the case $\tar \geq S_T y_0$, and because~\eqref{extremality} holds, we only need to prove~\eqref{extremal}. We do so by establishing~\eqref{extremal_bis}, that is we prove that for $\adjtar \neq 0$, $B^\ast S_t^\ast \adjtar = S_t^\ast \adjtar$ avoids $\sing(\U_r)$ for a.e. $t>0$. 
This is a direct application of the bathtub principle~\cite{Nous}[Lemma 2.3], according to which any $v \in \sing(\U_r)$ must have at least one level set of positive measure. 
\end{proof}

\section{Proofs of the main results}
\label{sec-proofs}

\subsection{Organisation}
This section is devoted to the proof of our main results Theorem~\ref{thm-main-approx} and Theorem~~\ref{thm-main-exact}. Throughout, $y_0, \, \tar \in X$ and $T>0$ are fixed, and $\tilde y_T = \tar - S_T y_0$. It is organised as follows: first, we prove by means of Proposition~\ref{reach-implies-cond} in subsection~\ref{necessary-reach} the necessity of the two reachability conditions~\eqref{cond-approx} and~\eqref{cond-exact} for the corresponding reachability statements. 

Then in the next subsections, we analyse the primal optimisation problem~\eqref{ocp}, given by
\[\inf_{u \in E, \; y(T) \in \overline{B}(\tar, \e)} F(u) = \inf_{u \in E, \; L_T u \in \overline{B}(\tilde y_T, \e)} F(u).\]
%As remarked in the introduction, if $F_r(u)< +\infty$ (\ie $u \in L^r_{\adhco(\U)}$), $u$ takes values in $\cone(\adhco(\U))$. Hence if the infimum above is not the trivial $+\infty$, this means that 
First, we note that if $\|\tilde y_T\|_X \leq \e$ (which is equivalent to $\tar \in \overline{B}(S_T y_0,\e)$), then the null-control $0 \in E$ is optimal and the infimum above is $0$. In fact, it is then the only optimal control, because
if $F(u) = 0$, we must have $j_{\U_r}(u(t)) = 0$ and hence $u(t) = 0$ for a.e. $t \in (0,T)$, by Lemma~\ref{basic-gauge}[(i)] applied to~$\U_r$. Obviously, the converse also holds: if $u=0$ is optimal (in which case it is the only optimal control), then $\|y_T\|_X \leq \e$.

As explained in the introduction, we analyse the optimisation problem~\eqref{ocp} by forming its dual~\eqref{dual-problem}. 
The optimal control (or primal) problem~\eqref{ocp} indeed rewrites
\[\inf_{u \in E, \; y(T) \in \overline{B}(\tilde y_T, \e)} F(u) = \inf_{u \in E} F(u) + G_\e(L_T u),\]
where $G_\e = \delta_{\overline{B}(\tilde y_T, \e)} \in\Gamma_0(X)$ conjugates to $G_\e^\ast : z \mapsto \e \|z\| +\langle \tilde y_T, z\rangle $. 

In Subsection~\ref{subsec-strong-duality}, we first establish through Proposition~\ref{compute-conj} that the functions $F$ is in $\Gamma_0(E)$ and provide the formula for its conjugate, showing that the above problem indeed admits a Fenchel-Rockafellar dual given by 
\[- \inf_{\adjtar \in X} F^\ast(L_T^\ast \adjtar) + G_\e^\ast(-\adjtar) = - \inf_{\adjtar \in X} J_{\e}(\adjtar).\]
That is, the optimisation problem~\eqref{dual-problem} is (up to a minus sign) the Fenchel-Rockafellar dual of the optimisation problem~\eqref{ocp}, and the corresponding weak duality is satisfied:  
\begin{equation}
\label{weak-duality}
\inf_{u \in E} F(u) + G_\e(L_T u) \geq - \inf_{\adjtar \in X} J_{\e}(\adjtar).
\end{equation}
In the same subsection, Lemma~\ref{strong-duality} verifies that a sufficient condition for the Fenchel-Rockafellar theorem to hold is met~\cite{Rockafellar1967}. Consequently, 
\begin{itemize}
    \item strong duality (\ie equality in~\eqref{weak-duality}) is satisfied,
    \item if the dual problem has a finite infimum, then the primal problem attains its infimum: in other words, there exists an optimal control. 
\end{itemize}
Subsection~\ref{existence-opt} is then devoted to the proof that under assumptions\eqref{cond-approx} (resp.~\eqref{cond-exact}), the dual problem~\eqref{dual-problem} has a finite infimum, showing that optimal controls exist. 

We also discuss whether the infimum of~\eqref{dual-problem} is attained, in which case we may speak of primal-dual optimal pairs. 
We recall that any primal-dual optimal pair $(u^\star, \adjtar^\star)$ is a saddle point of the Lagrangian  
\[(u,q) \in E \times X \mapsto \langle q, L_T u\rangle_X  + F(u) - \delta_{\overline{B}(\tilde y_T, \e)}^\ast(q) = \langle q, L_T u\rangle_X  + F(u) - \sigma_{\overline{B}(\tilde y_T, \e)}(q),\]
which in turn is equivalent to the first-order optimality conditions 
\begin{equation}
    \label{saddle-point}
u^\star \in \partial F(L_T^\ast \adjtar^\star), \qquad \adjtar^\star \in -\partial \delta_{\overline{B}(\tilde y_T, \e)}(L_T u^\star).
\end{equation}
In Subsection \ref{unique-extremal}, we analyse the uniqueness and extremality of optimal controls by studying these optimality conditions, thereby completing the proofs of Theorems~\ref{thm-main-approx} and \ref{thm-main-exact}.

\subsection{Necessary conditions for reachability}
\label{necessary-reach}

We start by proving the necessity of our two conditions~\eqref{cond-approx} and~\eqref{cond-exact} for approximate and exact reachability, respectively.
First, we prove Lemma~\ref{cont-cone}.

\noindent \textit{Proof}(of Lemma~\ref{cont-cone}).
Given the definition of $F^\ast$, we have
$F^\ast(L_T^\ast \adjtar) = 0$ if and only if $\sigma_{\U_r}(L_T^\ast \adjtar(t))= 0$ for a.e. $t \in (0,T)$.%(in the case $r =\infty$, this uses the fact that $\sigma_{\adhco(\U)} \geq 0$ thanks to the assumption that $0 \in \U$). 

By definition of~$\sigma_{\U_r}$, the condition $\sigma_{\U_r}(L_T^\ast \adjtar(t)) = 0$ is equivalent to $\scl{u}{L_T^\ast\adjtar(t)}_U \leq 0$ for all $u \in \U_r$, which in turn is equivalent to $\scl{u}{L_T^\ast\adjtar(t)}_U \leq 0$ for all $u \in P_r = \cone(\U_r)$ and hence to $L_T^\ast \adjtar(t) \in P_r^\circ$.

In other words, we have proved the equivalence 
\begin{equation*}
F^\ast(L_T^\ast \adjtar) = 0 \quad \iff \quad
\left(\text{for a.e. } t \in (0,T), \; L_T^\ast \adjtar(t) \in P_r^\circ\right) 
\end{equation*}
Hence,~\eqref{cond-approx} is equivalent to~\eqref{CNS-cone}.
\qed

\begin{prpstn}
\label{reach-implies-cond}
Let $P_r = \cone(\U_r)$.

If $\tilde y_T$ is approximately reachable from $y_0$ in time $T$ under the constraints $P_r$, then~\eqref{cond-approx} holds. 

If $\tilde y_T$ is exactly reachable from $y_0$ in time $T$ under the constraints $P_r$ with controls in $L^2_{\U_r}$, then~\eqref{cond-exact} holds.
\end{prpstn}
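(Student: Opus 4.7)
The plan is to handle the two statements separately, and in both cases reduce the reachability hypothesis, by a direct duality pairing, to an inequality of the required form.

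For the approximate case, I would start by invoking Lemma~\ref{cont-cone} to translate the hypothesis $F^\ast(L_T^\ast \adjtar)=0$ into the pointwise condition $L_T^\ast \adjtar(t) \in P_r^\circ$ for a.e.\ $t\in(0,T)$. Given any $\e>0$, approximate reachability in $P_r$ produces a control $u_\e \in E$ with $u_\e(t)\in P_r$ a.e.\ and $\|L_T u_\e - \tilde y_T\|_X \leq \e$. The polarity condition gives $\langle u_\e(t), L_T^\ast \adjtar(t)\rangle_U \leq 0$ for a.e.\ $t$; integrating and using the adjoint identity yields $\langle L_T u_\e, \adjtar\rangle_X \leq 0$. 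Letting $\e\to 0$, the strong convergence $L_T u_\e \to \tilde y_T$ in $X$ gives $\langle \tilde y_T, \adjtar\rangle_X \leq 0$, which is~\eqref{cond-approx}.

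For the exact case, fix a control $u \in L^2_{\U_r}$ with $u(t)\in P_r$ a.e.\ and $L_T u = \tilde y_T$. Then for any $\adjtar\in X$,
\[\langle \tilde y_T, \adjtar\rangle_X = \langle u, L_T^\ast \adjtar\rangle_E = \int_0^T \langle u(t), L_T^\ast \adjtar(t)\rangle_U\, dt.\]
The key pointwise inequality is the gauge-support duality: for every $u \in P_r$ and every $q\in U$, $\langle u, q\rangle_U \leq j_{\U_r}(u)\,\sigma_{\U_r}(q)$ (since $u/j_{\U_r}(u) \in \U_r$ when $u\neq 0$, and trivially otherwise). Applying this pointwise, integrating, and using the Cauchy--Schwarz inequality in $L^2(0,T)$ yields
\[\langle \tilde y_T, \adjtar\rangle_X \leq \Big(\int_0^T j_{\U_r}^2(u(t))\,dt\Big)^{1/2}\Big(\int_0^T \sigma_{\U_r}^2(L_T^\ast \adjtar(t))\,dt\Big)^{1/2} = 2\sqrt{F(u)}\,F^\ast(L_T^\ast \adjtar)^{1/2}.\]
Hence $c := 2\sqrt{F(u)}$ works, provided $F(u)>0$; if $F(u)=0$ then $u=0$ a.e., so $\tilde y_T = 0$ and any $c>0$ suffices. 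This is~\eqref{cond-exact}.

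The only non-routine ingredient is the gauge-support inequality $\langle u,q\rangle \leq j_{\U_r}(u)\sigma_{\U_r}(q)$ for $u \in P_r = \cone(\U_r)$, which I expect to be (or to follow immediately from) a basic lemma of the gauge appendix referenced in the paper; aside from this, both arguments reduce to the self-adjoint pairing $\langle L_T u, \adjtar\rangle_X = \langle u, L_T^\ast \adjtar\rangle_E$ and either a passage to the limit (approximate case) or Cauchy--Schwarz (exact case).
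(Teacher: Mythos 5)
Your proposal is correct and follows essentially the same route as the paper: for the approximate case, the polarity reformulation of Lemma~\ref{cont-cone} plus the adjoint pairing $\langle L_T u_\e, \adjtar\rangle_X = \langle u_\e, L_T^\ast\adjtar\rangle_E$ and a passage to the limit; for the exact case, the pointwise factorisation $u(t) = j_{\U_r}(u(t))\,v(t)$ with $v(t)\in\U_r$ (Lemma~\ref{basic-gauge}(iv)) followed by Cauchy--Schwarz, yielding $c = 2F(u)^{1/2}$. Your explicit treatment of the degenerate case $F(u)=0$ is a minor addition the paper leaves implicit, but the argument is the same.
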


\begin{proof} 

\textit{Approximate reachability.}
Assume that $\tilde y_T$ is approximately reachable from $y_0$ in time $T$ under the constraints $P_r$. In order to prove~\eqref{cond-approx} we prove the equivalent condition~\eqref{CNS-cone}.

We let $P_{r,T} :=\{u \in E, \, \text{for a.e. } t \in (0,T), \; u(t) \in P_r\}$. By assumption, there exists $(u_\e) \in P_{r,T}$ such that $L_T u_\e$ converges strongly to $\tilde y_T$ in $X$, as $\e \to 0$. Hence for a given $\adjtar \in X$, we may pass to the limit $\e \rightarrow 0$ within the inequality $\langle L_T u_\e, \adjtar \rangle_X \leq \sup_{u \in P_{r,T}}\langle L_T u, \adjtar \rangle_X$, leading to
\[\langle \tilde y_T, \adjtar \rangle_X \leq \sup_{u \in P_{r,T}}\langle L_T u, \adjtar \rangle_X = \sup_{u \in P_{r,T}}\langle u, L_T^\ast \adjtar \rangle_E = \int_0^T \sup_{u \in P_{r,T}} \scl{u}{L_T^\ast \adjtar(t)}_U dt.\]
Assume that for a.e. $t \in (0,T)$, $L_T^\ast \adjtar(t) \in P_r^\circ$, which means that for a.e. $t \in (0,T)$ and all $u \in P_r$, there holds $\scl{u}{L_T^\ast \adjtar(t)}_U \leq 0$. In particular, we find that the right-hand side of the inequality must be nonpositive, hence so is the left-hand side, \ie $\langle \tilde y_T, \adjtar \rangle_X \leq 0$.

\textit{Exact reachability.}
Now assume that $\tilde y_T$ is exactly reachable from $y_0$ in time $T$ under the constraints~$P_r$ with controls in $L^2_{\U_r}$.
Then one can find $u \in  L^2_{\U_r}$ such that $L_T u = \tilde y_T$.
For a.e. $t \in (0,T)$, the inclusion $u(t) \in P_r = \cone(\U_r)$ shows that $u(t) \in j_{\U_r}(u(t)) \; \U_r$  by Lemma~\ref{basic-gauge}[(iv)] applied to the set $\U_r$. We may thus write $u(t) = j_{\U_r}(u(t)) \, v(t)$ with $v(t) \in \U_r$.
We now bound as follows
\begin{align*} 
\langle \tilde y_T, \adjtar \rangle_X & = 
\langle L_T u, \adjtar \rangle_X 
 = \langle u, L_T^\ast \adjtar \rangle_E 
 = \int_0^T \langle u(t), L_T^\ast \adjtar(t)\rangle_U \,dt  \\
& =\int_0^T  j_{\U_r}(u(t)) \, \langle v(t), L_T^\ast \adjtar(t)\rangle_U \,dt 
 \leq \int_0^T  j_{\U_r}(u(t)) \, \sigma_{\U_r}( L_T^\ast \adjtar(t)) \,dt
\end{align*}
By the Cauchy-Schwarz inequality, we find
\begin{align*} 
\langle \tilde y_T, \adjtar \rangle_X & \leq 
\Big(\int_0^T j_{\U_r}^2(u(t))\,dt\Big)^{1/2}  \Big(\int_0^T  \sigma_{\U_r}^2( L_T^\ast \adjtar(t)) \,dt\Big)^{1/2} \\
& = (2 F(u))^{1/2}  (2 F^\ast(L_T^\ast \adjtar))^{1/2},
\end{align*}
which is exactly the expected inequality~\eqref{cond-exact} with $c = 2 (F(u))^{1/2}<+\infty$ since $u \in L^2_{\U_r}$. 
%For $r=\infty$, we directly obtain (using that $\sigma_{\adhco(\U)} \geq 0$ since $0 \in \U \subset \adhco(\U)$)
%\begin{align*} 
%\langle \tilde y_T, \adjtar \rangle_X
%& \leq \sup_{t \in (0,T)} j_{\adhco(\U)}(u(t)) \int_0^T \sigma_{\adhco(\U)}( L_T^\ast \adjtar(t)) \,dt 
%= (2 F_\infty(u))^{1/2} (2 F_\infty^\ast(L_T^\ast \adjtar))^{1/2},
%\end{align*}
%which is~\eqref{cond-exact} with $c = 2 (F_\infty(u))^{1/2}<+\infty$ since $u \in L^\infty_{\adhco(\U)}$.
\end{proof}

\subsection{Primal and dual problems}
\label{subsec-strong-duality}
We derive the dual problem and establish strong duality whatever the value of $\e \geq 0$.
\begin{prpstn}
\label{compute-conj}
The function $F$ defined by~\eqref{primal-cost} is in $\Gamma_0(E)$ and its Fenchel conjugates is given by formula~\eqref{dual-cost}.
\end{prpstn}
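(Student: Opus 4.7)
The plan is to verify the three defining properties of $\Gamma_0(E)$ for $F$ (convex, lower semicontinuous, proper), then establish the formula for $F^\ast$ by combining a pointwise Legendre identity with an interchange of conjugate and integral. As a bonus, the formula will immediately show that $F^\ast$ is finite on all of $E$.

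First, I would show that $F \in \Gamma_0(E)$. Since $\U_r$ is closed, convex, bounded and contains $0$, the gauge $j_{\U_r}$ is nonnegative, convex, and lower semicontinuous on $U$; composing with the convex nondecreasing map $t \mapsto \tfrac{1}{2}t^2$ on $[0,+\infty)$ yields a convex lsc nonnegative function $f := \tfrac{1}{2} j_{\U_r}^2$ on $U$. Convexity of $F$ then follows from linearity of the integral, and properness is immediate since $0 \in \U_r$ yields $j_{\U_r}(0) = 0$ and hence $F(0)=0$. For lower semicontinuity, given $u_n \to u$ in $E$, extract a subsequence converging a.e.~to $u$ and apply Fatou's lemma to the nonnegative integrand $f(u_n(t))$, using the pointwise lsc of $f$.

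Second, I compute the pointwise conjugate $f^\ast = \tfrac{1}{2} \sigma_{\U_r}^2$ on $U$. Fix $p \in U$. Since $f(u) = +\infty$ off $\cone(\U_r)$, the supremum defining $f^\ast(p)$ is restricted to $u \in \cone(\U_r)$. Using the gauge identities of Appendix~\ref{app-gauge-results} (the infimum in the definition of $j_{\U_r}$ is attained because $\U_r$ is closed, and $\U_r$ is bounded so $j_{\U_r}(u)=0 \iff u=0$), every such $u$ writes $u = tv$ with $t = j_{\U_r}(u) \geq 0$ and $v \in \U_r$, whence
\[
\langle p, u\rangle - f(u) = t\langle p, v\rangle - \tfrac{1}{2}t^2 \leq t\,\sigma_{\U_r}(p) - \tfrac{1}{2}t^2.
\]
Since $0 \in \U_r$ forces $\sigma_{\U_r}(p) \geq 0$, maximising the right-hand side over $t \geq 0$ yields $f^\ast(p) \leq \tfrac{1}{2}\sigma_{\U_r}^2(p)$. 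The reverse inequality is obtained by choosing, for any $\eta>0$, a vector $v_\eta \in \U_r$ with $\langle p,v_\eta\rangle \geq \sigma_{\U_r}(p) - \eta$ and $u = \sigma_{\U_r}(p)\,v_\eta$, then letting $\eta \to 0$.

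Third, I interchange the conjugate with the integral to conclude
\[
F^\ast(p) = \int_0^T f^\ast(p(t))\,dt = \tfrac{1}{2}\int_0^T \sigma_{\U_r}^2(p(t))\,dt.
\]
The $\leq$ direction is immediate by pointwise domination. The $\geq$ direction requires selecting, for a.e.~$t$, a near-maximiser $v(t) \in \U_r$ of $\langle p(t), \cdot\rangle$ in a measurable way, then setting $u(t) := \sigma_{\U_r}(p(t))\,v(t)$; the bound $\|v(t)\|_U \leq R$ (with $R$ the radius of $\U_r$) and $\sigma_{\U_r}(p(t)) \leq R\|p(t)\|_U$ ensures $u \in E$, and taking better and better selections recovers the integral of $\tfrac{1}{2}\sigma_{\U_r}^2(p(t))$. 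The cleanest route is to cite the standard integration-interchange theorem for normal convex integrands (see~\cite{bauschke-combettes}), applied to the integrand $f$ which is autonomous in $t$ and convex lsc in $u$. Finally, the same bound $\sigma_{\U_r}(p) \leq R\|p\|_U$ gives $F^\ast(p) \leq \tfrac{R^2}{2}\|p\|_E^2 < +\infty$ for every $p \in E$, so $\dom(F^\ast) = E$.

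The main obstacle is the measurable selection required for the interchange step; it can either be absorbed into a reference to the normal integrand machinery, or handled directly using the weak compactness of $\U_r$ and a Kuratowski--Ryll-Nardzewski type argument. Everything else is a routine assembly of convex-analytic facts.
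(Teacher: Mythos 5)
Your proof is correct, but it runs in the opposite direction from the paper's. The paper never verifies $F\in\Gamma_0(E)$ directly: it starts from the candidate conjugate $G(p):=\tfrac{1}{2}\int_0^T\sigma_{\U_r}^2(p(t))\,dt$, notes that $\tfrac{1}{2}\sigma_{\U_r}^2\in\Gamma_0(U)$ and that $(\tfrac{1}{2}\sigma_{\U_r}^2)^\ast=\tfrac{1}{2}j_{\U_r}^2$ (Lemma~\ref{conj-supp-square}, proved via the composition formula for conjugates of $g\circ f$), then invokes an integral-functional lemma from~\cite{Nous} to get $G\in\Gamma_0(E)$ and $G^\ast=F$, and concludes by Fenchel--Moreau that $F=G^\ast\in\Gamma_0(E)$ with $F^\ast=G^{\ast\ast}=G$. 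You instead establish $F\in\Gamma_0(E)$ by hand (Fatou for the lsc), compute the pointwise conjugate in the direction $\bigl(\tfrac{1}{2}j_{\U_r}^2\bigr)^\ast=\tfrac{1}{2}\sigma_{\U_r}^2$ by a direct and more elementary argument, and then interchange conjugation with integration on the side of $F$. The two pointwise identities are of course equivalent since both functions lie in $\Gamma_0(U)$. The real trade-off is in the interchange step: the paper conjugates the integrand $\tfrac{1}{2}\sigma_{\U_r}^2$, which is finite-valued and Lipschitz on bounded sets (as $\U_r$ is bounded), so the measurable-selection issue is mild and cleanly absorbed into the cited lemma; your direction conjugates the extended-real-valued integrand $\tfrac{1}{2}j_{\U_r}^2$, which forces you into the normal-integrand machinery that you correctly identify as the main obstacle. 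What your route buys is self-containedness (no appeal to Fenchel--Moreau and a more transparent pointwise computation); what the paper's route buys is that all the analytic delicacy is pushed onto the better-behaved function $\sigma_{\U_r}$. Both arguments deliver $\dom(F^\ast)=E$ as the same byproduct.
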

\begin{proof}
Thanks to Fenchel-Moreau's theorem, it is equivalent to prove that $F^\ast$ defined by~\eqref{dual-cost} is in $\Gamma_0(E)$ and conjugates to $F$.

Since $\tfrac {1}{2}\sigma^2_{\U_r} \in \Gamma_0(X)$ by Lemma~\ref{conj-supp-square}, one may use~\cite{Nous}[Lemma A.5.], to find that $F^\ast \in \Gamma_0(E)$, and further that conjugation and integration commute: for $u \in E$,
\[(F^\ast)^\ast(u) = \int_0^T \Big(\tfrac{1}{2} \sigma_{\U_r}^2\Big)^\ast(u(t)) \,dt = \int_0^T \tfrac{1}{2} j_{\U_r}^2(u(t)) \,dt = F(u).\]
where we again used Lemma~\ref{conj-supp-square}.

\end{proof}

We now turn to strong duality.

\begin{lmm}
\label{strong-duality}
The function $F^\ast$ is continuous at $0 = L_T^\ast 0$. 
\end{lmm}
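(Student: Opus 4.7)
The plan is to exploit the boundedness of $\U_r$ to obtain a quadratic upper bound on $F^\ast$, from which continuity at $0$ follows at once.

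First, I would observe that the minimal hypothesis~\eqref{minimal-hyp} gives $R := \sup_{v \in \U} \|v\|_U < +\infty$, and since $\U_r = \adhco(\U)$, one also has $\sup_{v \in \U_r} \|v\|_U = R$ (the closed convex hull of a bounded set is bounded with the same diameter bound). By Cauchy--Schwarz, this yields the pointwise bound
\[
\forall q \in U, \qquad \sigma_{\U_r}(q) = \sup_{v \in \U_r} \scl{q}{v}_U \leq R \|q\|_U.
\]

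Second, since $0 \in \U$ by~\eqref{minimal-hyp}, we have $0 \in \U_r$, hence $\sigma_{\U_r}(0) = 0$ and therefore $F^\ast(0) = 0$. Integrating the previous inequality squared over $(0,T)$ and using the definition~\eqref{dual-cost} gives
\[
\forall p \in E, \qquad 0 \leq F^\ast(p) = \tfrac{1}{2}\int_0^T \sigma_{\U_r}^2(p(t))\,dt \leq \tfrac{R^2}{2}\|p\|_E^2.
\]

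Finally, for any sequence $p_n \to 0$ in $E$, the sandwich $0 \leq F^\ast(p_n) \leq \tfrac{R^2}{2}\|p_n\|_E^2 \to 0 = F^\ast(0)$ proves continuity of $F^\ast$ at $0$. As a byproduct, this also shows $\dom(F^\ast) = E$, as announced right after~\eqref{dual-cost}. There is no substantive obstacle here: everything reduces to the boundedness of $\U_r$, which is inherited directly from~\eqref{minimal-hyp}.
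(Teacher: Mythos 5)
Your proof is correct and follows essentially the same route as the paper: both arguments reduce to the boundedness of $\U_r$ (inherited from~\eqref{minimal-hyp}), derive the pointwise bound $\sigma_{\U_r}(q) \leq R\|q\|_U$, and integrate to obtain $0 \leq F^\ast(p) \leq \tfrac{R^2}{2}\|p\|_E^2$, from which continuity at $0$ is immediate. No gaps.
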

\begin{proof}
    Since $\U$ is bounded, so is $\U_r$. Denoting $R>0$ a corresponding bound, the function $\sigma_{\U_r}$ is $R$-Lipschitz, hence for all $z \in U$  the inequality $|\sigma_{\U_r}(z)| \leq K \|z\|_U$.
It follows that for all $p\in E$
\[ 0 \leq F^\ast(p)\leq \tfrac{1}{2} K^2 \int_0^T \|p(t)\|_U^2\,dt = \tfrac{1}{2} K^2 \|p\|_E^2,\]% \qquad 0 \leq F_\infty^\ast(p)\leq T  F^\ast(p) \leq \tfrac{1}{2} K^2 T \|p\|_E^2, \]
%where we applied the Cauchy-Schwarz inequality in the %case of $F_\infty^\ast$.
The continuity of $F^\ast$ at $0$ follows. \end{proof}

\begin{rmrk}
In fact, the above inequalities prove that the convex lsc function $F^\ast$ takes finite values on the whole of $E$, hence is continuous on $E$ and not merely at $0$.
\end{rmrk}

Consequently, the Fenchel-Rockafellar theorem applies: strong duality holds and furthermore, the infimum of the optimisation problem is attained if finite. 

\subsection{Existence of optimal controls}
\label{existence-opt}

In order to prove that the optimisation problem attains its infimum, we establish that the dual problem has a finite infimum. We thereby prove the converse to Proposition~\ref{reach-implies-cond}: conditions~\eqref{cond-approx} (resp.~\eqref{cond-approx}) are sufficient for approximate (resp. exact) reachability.

We start with approximate reachability, in which case the infimum is a minimum.
\begin{prpstn}
Assume that $\e>0$, and that~\eqref{cond-approx} holds. Then the dual problem has a minimum. In particular, under~\eqref{cond-approx}, $\tar$ is approximately reachable from $y_0$ in time $T>0$ under the constraints $P_r = \cone(\U_r)$.
\end{prpstn}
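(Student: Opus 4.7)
The plan is to show that the dual functional $J_\e$ is coercive and weakly lower semicontinuous on $X$, deduce the existence of a minimizer $\adjtar^\star$ by the direct method of calculus of variations, and then transfer this conclusion to the primal problem via the strong duality established through Lemma~\ref{strong-duality} and the Fenchel-Rockafellar theorem. Since the theorem additionally guarantees that the primal infimum is attained once the dual infimum is finite, this produces an optimal control $u^\star$ for~\eqref{ocp}; as $F(u^\star)<+\infty$, we get $u^\star \in L^2_{\U_r}$, so $u^\star(t)\in P_r$ for a.e. $t \in (0,T)$ and $L_T u^\star \in \overline{B}(\tilde y_T,\e)$. Running this for every $\e>0$ yields approximate reachability under $P_r$.

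\textbf{Weak lower semicontinuity.} This comes essentially for free. By Lemma~\ref{strong-duality} and the remark following it, $F^\ast$ is finite-valued, convex and continuous on $E$, hence weakly lsc; composing with the linear continuous operator $L_T^\ast$ preserves these properties. The map $\adjtar \mapsto \langle \tilde y_T, \adjtar\rangle_X$ is weakly continuous and $\|\cdot\|_X$ is weakly lsc. Therefore $J_\e$ is convex, proper, and weakly lsc on $X$.

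\textbf{Coercivity (the main step).} I would proceed by contradiction: suppose $(\adjtar^n)$ satisfies $r_n := \|\adjtar^n\|_X \to +\infty$ while $J_\e(\adjtar^n)$ remains bounded above. Set $\hat\adjtar^n := \adjtar^n / r_n$. Since $\sigma_{\U_r}$ is positively homogeneous of degree $1$, the function $F^\ast$ is positively homogeneous of degree $2$, and
\[ J_\e(\adjtar^n) = r_n^2 \, F^\ast(L_T^\ast \hat\adjtar^n) - r_n \langle \tilde y_T, \hat\adjtar^n \rangle_X + \e \, r_n. \]
By reflexivity, after extraction $\hat\adjtar^n \rightharpoonup \hat\adjtar$ weakly in $X$. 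If $F^\ast(L_T^\ast \hat\adjtar) > 0$, weak lower semicontinuity gives $\liminf_n F^\ast(L_T^\ast \hat\adjtar^n) > 0$, so the quadratic term dominates the $O(r_n)$ contributions and $J_\e(\adjtar^n) \to +\infty$, a contradiction. Otherwise $F^\ast(L_T^\ast \hat\adjtar) = 0$, and condition~\eqref{cond-approx} delivers $\langle \tilde y_T, \hat\adjtar \rangle_X \leq 0$. Dropping the nonnegative quadratic term,
\[ J_\e(\adjtar^n) \geq r_n \bigl( \e - \langle \tilde y_T, \hat\adjtar^n \rangle_X \bigr); \]
since $\langle \tilde y_T, \hat\adjtar^n \rangle_X \to \langle \tilde y_T, \hat\adjtar \rangle_X \leq 0$, the parenthesis is $\geq \e/2$ for $n$ large, so $J_\e(\adjtar^n) \to +\infty$, again a contradiction. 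This is the crucial step, and it is precisely condition~\eqref{cond-approx} that rules out the potentially degenerate directions where $F^\ast$ vanishes; the strict positivity $\e>0$ is also essential to absorb the linear term.

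\textbf{Conclusion.} Coercivity together with weak lower semicontinuity of $J_\e$ on the reflexive Hilbert space $X$ ensures that any minimizing sequence is bounded, admits a weakly convergent subsequence, and its weak limit is a minimizer of $J_\e$. Hence the dual infimum is finite and attained. By Lemma~\ref{strong-duality} and the Fenchel-Rockafellar theorem, strong duality holds and the primal infimum is finite and attained at some $u^\star \in E$ with $F(u^\star) < +\infty$ and $L_T u^\star \in \overline{B}(\tilde y_T, \e)$. Thus $u^\star \in L^2_{\U_r}$ and $u^\star(t) \in P_r$ for a.e. $t \in (0,T)$, which proves approximate reachability of $\tar$ from $y_0$ in time $T$ under the constraints $P_r$.
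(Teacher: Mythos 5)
Your proof is correct and follows essentially the same route as the paper: both establish coercivity of $J_\e$ by normalising an unbounded sequence, exploiting the $2$-homogeneity of $F^\ast\circ L_T^\ast$, extracting a weak limit, using weak lower semicontinuity of $F^\ast$ to handle the degenerate directions via~\eqref{cond-approx}, and absorbing the linear term with $\e>0$; the conclusion then passes through the Fenchel--Rockafellar duality exactly as in Section~\ref{subsec-strong-duality}. The only cosmetic difference is that you argue by contradiction on boundedness of $J_\e(\adjtar^n)$ while the paper directly bounds $\liminf J_\e(\adjtar)/\|\adjtar\|_X$ from below by $\e$.
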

 
\begin{proof}
First, we notice that $J_{\e}$ is in $\Gamma_0(X)$: hence it suffices to prove that $J_{\e}$ is coercive in $X$ to conclude that it has a minimum. 

We show that coercivity holds following the proof of Proposition 3.5 in~\cite{Nous}, by proving that
\begin{equation*}
    %\label{strong-coercivity}
    \liminf_{\|\adjtar\|_X\to\infty} \frac{J_{\e}(\adjtar)}{\|\adjtar\|_X}>0. 
\end{equation*}
We take a sequence $\|\adjtar^n\|_X\to \infty$ and denote $\textstyle q_f^n := \tfrac{\adjtar^n}{\|\adjtar^n\|_X}$.
By homogeneity of the different terms involved, we have
\begin{equation*}
    \frac{J_{\e}(p^n_f)}{\|p^n_f\|_X}= \|p^n_f\|_X F^\ast(L_T^\ast q_f^n)-\left\langle \tilde y_T, q_f^n\right\rangle_{X} + \varepsilon
\end{equation*}
and hence if $\liminf_{n\to \infty} F^\ast(L_T^\ast q_f^n)>0$, then \begin{equation*} \liminf_{n\to \infty} \frac{J_{\e}(p^n_f)}{\|p^n_f\|_X}=+\infty.\end{equation*}
Let us now treat the remaining case where $\liminf_{n\to \infty} F^\ast(L_T^\ast q_f^n)= 0$. Since $\|q_f^n\|_X = 1$, upon extraction of a subsequence, we have $q_f^n \rightharpoonup q_f$ weakly in $X$ for some $q_f \in X$.
Since $L_T^\ast \in L(X,E)$, we have $L_T^\ast q_f^n \rightharpoonup L_T^\ast q_f$ weakly in $E$.

Now, since $F^\ast$ is convex and strongly lsc on $E$, it is (sequentially) weakly lsc and taking the limit we obtain $F^\ast(L_T^\ast q_f) = 0$.
From our assumption that~\eqref{cond-approx} holds, this leads to $\langle \tilde y_T, q_f\rangle_X \leq 0$.
Then we end up with
\begin{equation*}
   \liminf_{n\to \infty} \frac{J_{\e}(p^n_f)}{\|p^n_f\|_X}\geq  -\langle \tilde y_T, q_f \rangle_X+ \varepsilon  \geq \varepsilon >0.
\end{equation*}

\end{proof}

We next consider exact reachability, recalling the definition
\[c^\star = \sup_{\|q_f\|_X =1}\frac{\scl{\tilde y_T}{q_f}_X}{F^\ast(L_T^\ast q_f)^{1/2}} ,\]
with the convention $0/0 = 0$, $\alpha/0 = +\infty$ for $\alpha>0$.
Also recall that $c^\star \in (0,+\infty]$ if and only if $\tilde y_T  \neq 0$, and in the latter case, one may restrict the set of vectors $q_f \in X$ over which is performed, namely
\[c^\star = \sup_{\substack{\|q_f\|_X =1\\\scl{\tilde y_T}{q_f}_X > 0}} \frac{\scl{\tilde y_T}{q_f}_X}{F^\ast(L_T^\ast q_f)^{1/2}}.\]
\begin{prpstn}
Assume that $\e=0$, and that~\eqref{cond-exact} holds. Then the dual problem has a finite infimum. In particular, under~\eqref{cond-exact}, $\tar$ is approximately reachable from $y_0$ in time $T>0$ under the constraints $P_r = \cone(\U_r)$ with controls in $L^2_{\U_r}$.

Furthermore, the dual problem admits a minimiser if and only if $c_r^\star$ is attained. 
\end{prpstn}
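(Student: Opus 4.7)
The plan is a three-step argument. First, I combine~\eqref{cond-exact} with the definition of $J_0$ to produce an explicit lower bound. Setting $\alpha := F^\ast(L_T^\ast \adjtar)^{1/2} \geq 0$ and applying~\eqref{cond-exact} with the sharp constant $c^\star < +\infty$, one finds
\[ J_0(\adjtar) = \alpha^2 - \scl{\tilde y_T}{\adjtar}_X \geq \alpha^2 - c^\star \alpha \geq -\frac{(c^\star)^2}{4} \]
for every $\adjtar \in X$, so the dual infimum is finite (and bounded below by $-(c^\star)^2/4$).

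Second, I invoke the Fenchel-Rockafellar machinery set up in Subsection~\ref{subsec-strong-duality}: strong duality and primal attainment hold thanks to the continuity of $F^\ast$ at $0 = L_T^\ast 0$ established in Lemma~\ref{strong-duality}. Since the dual infimum is finite, so is the primal, and there exists $u^\star \in E$ with $L_T u^\star = \tilde y_T$ and $F(u^\star) < +\infty$. The latter condition is exactly $u^\star \in L^2_{\U_r}$, which yields exact reachability under $P_r$ with controls in $L^2_{\U_r}$.

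For the \emph{furthermore} clause, the key observation is the positive $2$-homogeneity of $F^\ast$, inherited from the positive $1$-homogeneity of $\sigma_{\U_r}$. Parameterising $\adjtar = r q_f$ with $r \geq 0$ and $\|q_f\|_X = 1$, we obtain $J_0(r q_f) = r^2 F^\ast(L_T^\ast q_f) - r \scl{\tilde y_T}{q_f}_X$. The inner minimisation over $r \geq 0$ equals $0$ unless $F^\ast(L_T^\ast q_f) > 0$ and $\scl{\tilde y_T}{q_f}_X > 0$, in which case it evaluates to $-\scl{\tilde y_T}{q_f}_X^2/(4 F^\ast(L_T^\ast q_f))$, attained at $r^\star = \scl{\tilde y_T}{q_f}_X/(2 F^\ast(L_T^\ast q_f))$. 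Optimising over unit vectors $q_f$ then gives $\inf J_0 = -(c^\star)^2/4$, and the infimum is attained if and only if the supremum defining $c^\star$ is.

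The main obstacle I anticipate is the careful treatment of the degenerate directions $q_f$ with $F^\ast(L_T^\ast q_f) = 0$, which could a priori lead to $\inf J_0 = -\infty$. Condition~\eqref{cond-exact} rescues us: it forces $\scl{\tilde y_T}{q_f}_X \leq 0$ on this set, so these directions contribute only $0$ to the inner infimum and can safely be discarded when computing $c^\star$. One must also isolate the trivial case $\tilde y_T = 0$, where $c^\star = 0$ is vacuously attained and $\adjtar^\star = 0$ is a minimiser; this confirms both directions of the equivalence.
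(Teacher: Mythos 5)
Your proposal is correct and follows essentially the same route as the paper: the lower bound $\alpha^2 - c\alpha \geq -(c)^2/4$ obtained from~\eqref{cond-exact}, the Fenchel--Rockafellar machinery of Subsection~\ref{subsec-strong-duality} for primal attainment, and the half-line decomposition $\inf_{\adjtar} J_0 = \inf_{\|q_f\|_X=1}\inf_{r\geq 0} J_0(r q_f)$ exploiting the $2$-homogeneity of $\adjtar \mapsto F^\ast(L_T^\ast \adjtar)$ to identify $\inf J_0 = -(c^\star)^2/4$ and deduce the attainment equivalence. Your handling of the degenerate directions ($F^\ast(L_T^\ast q_f)=0$ forces $\scl{\tilde y_T}{q_f}_X \leq 0$ under~\eqref{cond-exact}) and of the trivial case $\tilde y_T = 0$ matches the paper's treatment.
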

\begin{proof}
    Let $r \in \{2,\infty\}$. 
When $\e=0$, the dual functional reads
\[ J_{0}(\adjtar) = F^\ast(L_T^\ast \adjtar) -\langle \tilde y_T, \adjtar \rangle_X.\] 
Thanks to the assumption that~\eqref{cond-exact} is satisfied, $J_{0}$ may be lower bounded as follows  \[\forall \adjtar \in X, \quad J_{0}(\adjtar) \geq F^\ast(L_T^\ast \adjtar) -c F^\ast(L_T^\ast \adjtar)^{1/2}. \]
Since the mapping $z \mapsto z - c z^{1/2}$ is lower bounded on $[0, +\infty)$, $\inf_{\adjtar \in X} J_{0}(\adjtar)> -\infty$, as wanted.

Now let us prove that the infimum is a minimum if and only if $c^\star$ is attained.
We rule out the case where $\tar = S_T y_0$, in which case $c^\star = 0$ is attained with $\adjtar^\star = 0$.

In the case where $\tar \neq S_T y_0$, we have $\inf J_{0} <0$ and $c^\star>0$. Indeed, if $\inf J_{0} =0$ were to hold, $0$ would be an optimal control meaning that $y_f = S_T y_0$. 
Let us look at the behaviour of $J_{0}$ over any possible half-line, using the equality 
\[\inf_{\adjtar \in X} J_{0}(\adjtar) = \inf_{\|q_f\|_X =1} \inf_{\lambda \geq 0} J_{0}(\lambda q_f) = : \inf_{\|q_f\|_X =1} m(q_f).\]
Since $\adjtar \mapsto F^\ast(L_T^\ast \adjtar)$ is 2-positively homogeneous, we find for a fixed $q_f \in X$ 
\[J_{0} (\lambda q_f) = F^\ast(L_T^\ast q_f) \lambda^2 - \scl{\tilde y_T}{q_f}_X \lambda.\]
If $\scl{\tilde y_T}{q_f}_X  \leq 0$, we clearly  have $m(q_f) = 0$ (recall that $F^\ast(L_T^\ast q_f) \geq  0$).
We are left with the case $\scl{\tilde y_T}{q_f}_X > 0$, which by~\eqref{cond-exact} imposes $F^\ast(L_T^\ast q_f) >0$. Then the quadratic function of $\lambda$ above is minimised uniquely at $\lambda^\star(q_f) := \tfrac{\scl{\tilde y_T}{q_f}_X}{2 F^\ast(L_T^\ast q_f)}$, leading to 
$m(q_f) =-\tfrac{\scl{\tilde y_T}{q_f}^2_X}{4 F^\ast(L_T^\ast q_f)}$.

In other words, we have found that 
\[\inf_{\adjtar \in X} J_{0}(\adjtar) = \inf_{\substack{\|q_f\|_X =1\\\scl{\tilde y_T}{q_f}_X > 0}}-\frac{\scl{\tilde y_T}{q_f}^2_X}{4 F^\ast(L_T^\ast q_f)} = -\frac{(c_r^\star)^2}{4}.\]

As a result, if $c^\star$ is attained by some $q_f \in X$ with $\|q_f\|_X =1$ and $\scl{\tilde y_T}{q_f}_X > 0$, then $\lambda^\star(q_f) q_f$ minimises $J_{0}$ over $X$.
Conversely, if $J_{0}$ has a minimiser $\adjtar^\star$, then $q_f^\star = \tfrac{\adjtar^\star}{\|\adjtar^\star\|_X}$ (which must satisfy $\langle \tilde y_T, q_f^\star\rangle_X > 0$), $c^\star$ is attained at $q_f^\star$.
\end{proof}

\subsection{Uniqueness, optimality conditions}
\label{unique-extremal}

\begin{prpstn}\label{prop-uniqueness}
Let $\e \geq 0$ and assume that $J_{\e}$ has a minimum and let $\adjtar^\star \in X$ be a minimiser. Then at least one control in $u \in  \partial F^\ast(L_T^\ast \adjtar^\star)$ is optimal.
Furthermore, 
\begin{itemize}
    \item  there exists a unique such control if and only if~\eqref{extremal} holds,
    \item if in addition~\eqref{extremality} holds, the unique control takes values in $P=\cone(\U)$.
\end{itemize}
\end{prpstn}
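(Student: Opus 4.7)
The plan is to combine the Fenchel-Rockafellar machinery established in Subsection~\ref{subsec-strong-duality} with a pointwise description of the subdifferential $\partial F^\ast(L_T^\ast \adjtar^\star)$. Since strong duality holds and $\adjtar^\star$ minimises $J_\e$, the Fenchel-Rockafellar theorem guarantees that the primal problem~\eqref{ocp} attains its infimum at some $u^\star \in E$. Applying the strong-duality identity to the pair $(u^\star, \adjtar^\star)$, together with feasibility of $u^\star$, forces the Fenchel-Young equality
\[F(u^\star) + F^\ast(L_T^\ast \adjtar^\star) = \langle L_T^\ast \adjtar^\star, u^\star\rangle_E,\]
which is exactly equivalent to $u^\star \in \partial F^\ast(L_T^\ast \adjtar^\star)$. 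This establishes the first assertion.

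To analyse uniqueness, I would describe $\partial F^\ast(L_T^\ast \adjtar^\star)$ explicitly. Setting $q^\star := L_T^\ast \adjtar^\star$, the integral representation~\eqref{dual-cost} and the interchange of subdifferential and integration (via Rockafellar's theorem on integral functionals, already invoked in the proof of Proposition~\ref{compute-conj}) yield: $u \in \partial F^\ast(q^\star)$ if and only if $u(t) \in \partial(\tfrac{1}{2} \sigma_{\U_r}^2)(q^\star(t))$ for a.e.\ $t \in (0,T)$. The chain rule for the nonnegative convex function $\tfrac{1}{2} \sigma_{\U_r}^2$ gives $\partial(\tfrac{1}{2}\sigma_{\U_r}^2)(q) = \sigma_{\U_r}(q)\,\partial \sigma_{\U_r}(q)$, and the classical identity $\partial \sigma_{\U_r}(q) = \argmax_{v \in \U_r}\langle q,v\rangle_U$ for support functions of bounded convex sets recovers formula~\eqref{opt_control}. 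The pointwise set $\sigma_{\U_r}(q^\star(t))\,\argmax_{v \in \U_r}\langle q^\star(t), v\rangle_U$ is a singleton exactly when either $\sigma_{\U_r}(q^\star(t)) = 0$ (equivalently $q^\star(t) \in P_r^\circ$, as observed in the proof of Lemma~\ref{cont-cone}) or the argmax is itself a singleton (equivalently $q^\star(t) \notin \sing(\U_r)$, by definition of $\sing$). Hence $\partial F^\ast(q^\star)$ is a singleton precisely when~\eqref{extremal} holds, and in that case the existing primal optimum must coincide with this unique element, yielding the ``if'' direction of uniqueness.

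The main obstacle is the converse direction: when~\eqref{extremal} fails, one must exhibit at least two genuinely \emph{primal-optimal} controls, not merely two distinct elements of $\partial F^\ast(q^\star)$. The primal-optimal set is the intersection of $\partial F^\ast(q^\star)$ with the affine constraint $L_T u = \tilde y_T - \e\,\adjtar^\star/\|\adjtar^\star\|$ (or $L_T u \in \overline{B}(\tilde y_T, \e)$ when $\adjtar^\star = 0$) coming from the second saddle-point condition, and this intersection could a priori remain a singleton even if $\partial F^\ast(q^\star)$ does not. I would tackle this via a measurable-selection argument on the positive-measure set $\{t : q^\star(t) \in \sing(\U_r) \cap (P_r^\circ)^c\}$, producing two distinct measurable selections of the pointwise argmax whose associated controls share the same $L_T$-image as $u^\star$. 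Finally, under~\eqref{extremality}, on $\{t : q^\star(t) \notin P_r^\circ\}$ the unique argmax $v^\star(t)$ is an exposed, hence extreme, point of $\U_r$, so $v^\star(t) \in \ext(\U_r) \subset \U$; combined with $\sigma_{\U_r}(q^\star(t)) \geq 0$ this gives $u^\star(t) = \sigma_{\U_r}(q^\star(t))\,v^\star(t) \in \cone(\U) = P$, while on the complementary set one has $u^\star(t) = 0 \in P$ since $0 \in \U$ by~\eqref{minimal-hyp}.
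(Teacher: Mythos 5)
Your argument follows the paper's own route almost step for step: existence of a primal optimum via Fenchel--Rockafellar attainment, identification $u^\star \in \partial F^\ast(L_T^\ast \adjtar^\star)$ through the saddle-point/Fenchel--Young equality, the pointwise interchange of subdifferential and integral (the paper invokes Lemma~A.5 of~\cite{Nous} where you invoke Rockafellar's interchange theorem), the chain rule giving~\eqref{opt_control}, the observation that the pointwise set is a singleton exactly off $\sing(\U_r)\cap (P_r^\circ)^c$, and the extremality step (you argue via exposed points, the paper via the fact that the argmax over a weakly compact convex set always contains an extreme point; both are fine, and your treatment of the case $\sigma_{\U_r}(q^\star(t))=0$ matches the paper's).

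The one place you diverge is the ``only if'' direction of uniqueness, which you single out as the main obstacle. The paper does not face this obstacle because it reads ``unique such control'' the way the introduction announces it: the inclusion $u \in \partial F^\ast(L_T^\ast \adjtar^\star)$ \emph{defines a unique control} if and only if~\eqref{extremal} holds, i.e.\ the claim is that the set $\partial F^\ast(L_T^\ast \adjtar^\star)$ is a singleton, not that the set of primal optima inside it is. Under that reading your pointwise analysis already closes both directions and no measurable-selection argument is needed. If you do insist on the stricter reading (uniqueness of \emph{optimal} controls), your sketch is incomplete as stated: two distinct measurable selections $v_1, v_2$ of the pointwise argmax give controls $u_i = \sigma_{\U_r}(q^\star)\,v_i$ that in general satisfy $L_T u_1 \neq L_T u_2$, and since the second saddle-point condition pins down $L_T u$ to the single point $\tilde y_T - \e\,\adjtar^\star/\|\adjtar^\star\|_X$ (for $\adjtar^\star \neq 0$), only one of them need be feasible, let alone optimal. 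You would have to construct selections agreeing in $L_T$-image, which is a genuinely harder problem and is not what the proposition is claiming. So: align your statement of uniqueness with the paper's (singleton subdifferential) and your proof is complete; otherwise the last step of the ``only if'' direction has a real gap.
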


\begin{proof}
Since the dual value is finite, optimal controls do exist as already mentioned. Let $u^\star$ be such an optimal control. Then the pair $(u^\star, \adjtar^\star)$ is a saddle point for the Lagrangian. In particular, the first of the two optimality conditions~\eqref{saddle-point} holds, \ie $u^\star \in  \partial F^\ast(L_T^\ast \adjtar^\star)$.

Let us now discuss uniqueness. 
By Lemma~\cite{Nous}[Lemma A.5], the latter inclusion is equivalent to $u^\star(t) \in \tfrac{1}{2} \partial \sigma^2_{\U_r}(L_T^\ast \adjtar^\star(t))$ for a.e. $t \in (0,T)$, which rewrites \[u^\star(t) \in \sigma_{\U_r}(L_T^\ast \adjtar^\star(t)) \, \partial \sigma_{\U_r}(L_T^\ast \adjtar^\star(t)) = \sigma_{\U_r}(L_T^\ast \adjtar^\star(t)) \argmax_{v \in \U_r}\langle L_T^\ast \adjtar^\star(t), v\rangle.\] by the chain rule. That is, we have obtained~\eqref{opt_control}. 

%Now, if $\|\tilde y_T\|_X \leq \e$ the unique dual variable is $\adjtar^\star = 0$, in which case these inclusions across $t \in (0,T)$ always define a unique control, namely $u^\star= 0$. If $\|\tilde y_T\|_X> \e$, 
These inclusions define a unique control if for a.e $t \in (0,T)$, we have either $L_T^\ast \adjtar^\star(t) \notin \sing(\U_r)$ by definition of the latter set, or if $\sigma_{\U_r}(L_T^\ast \adjtar^\star(t)) = 0$, which is equivalent to $L_T^\ast \adjtar^\star(t) \in \cone(\U_r)^\circ = P_r^\circ$. That is, we have proved that $u^\star \in \partial F^\ast(L_T^\ast \adjtar^\star)$ defines a unique optimal control if and only if~\eqref{extremal} is satisfied.

%\textit{Case where $r = \infty$.} Again, we make the inclusion more explicit. Letting $H(p):=\textstyle \int_0^T \sigma_{\adhco(\U)}(p(t))\,dt$ as before, so that $F_\infty^\ast = \tfrac{1}{2}H^2$, the chain rule and Lemma~\cite{Nous}[Lemma A.5] yield
%\begin{align*} u^\star \in \partial F_\infty^\ast(L_T^\ast \adjtar^\star) = H(L_T^\ast \adjtar^\star) \partial H(L_T^\ast \adjtar^\star) \;  \iff \;  \text{for a.e. } t \in (0,T), \; u^\star(t) \in H(L_T^\ast \adjtar^\star) \partial (\sigma_{\adhco(\U)})(L_T^\ast \adjtar^\star(t)),\end{align*}
%which corresponds to~\eqref{opt_control_infty}.
%Here uniqueness occurs if $L_T^\ast \adjtar^\star(t) \notin \sing(\adhco(\U))$ for a.e. $t \in (0,T)$. We have thus proved that $u^\star \in \partial F_\infty^\ast(L_T^\ast \adjtar^\star)$ defines a unique optimal control whenever~\eqref{extremal_infty} is satisfied.

All is left to prove is that this (assumed to be unique) control $u^\star \in \partial F^\ast(L_T^\ast \adjtar^\star)$ takes values in $P = \cone(\U)$ and not merely in $P_r = \cone(\U_r)$, whenever we assume in addition that~\eqref{extremality} holds.

For a.e. $t \in (0,T)$, \[u(t) \in \sigma_{\U_r}(L_T^\ast \adjtar^\star(t)) \argmax_{v \in \U_r}\;  \langle  L_T^\ast \adjtar^\star(t), v\rangle,\]
and we have shown that, when~\eqref{extremal} holds, this inclusion defines a unique control.  
If $M(t):= \sigma_{\U_r}(L_T^\ast \adjtar^\star(t))=0$, there is nothing to prove since $0 \in \U$. Now if $M(t) \neq 0$, $\tfrac{u(t)}{M(t)}$ belongs to a set that is reduced to a singleton. Since the linear function $v\mapsto \langle L_T^\ast \adjtar^\star(t), v\rangle$, when maximised over the convex set $\U_r$, must have at least one maximiser that is an extremal point of $\U_r$, this singleton must then be an extremal point of~$\U_r$, and hence an element of $\U$ by~\eqref{extremality}.

\end{proof}

We end this subsection by discussing the uniqueness of the dual optimal variable in the case of approximate reachability $\e>0$.

\begin{prpstn}
Assume that $\tar$ is approximately reachable from $y_0$ in time $T>0$ under the constraints $P_r = \cone(\U_r)$.
Then whatever the value of $\e>0$, the dual minimiser $\adjtar^\star$ is unique (and $\adjtar^\star \neq 0$ if and only if $\|\tilde y_T\|_X > \e$).
%Furthermore, any optimal control $u^\star$ satisfies~\eqref{opt_control_1} if $r=2$ and~\eqref{opt_control} if $r=\infty$.
\end{prpstn}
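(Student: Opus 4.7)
Existence of a minimiser having already been secured, the remaining work is twofold: establish uniqueness of $\adjtar^\star$ and characterise when it vanishes. The functional $J_\e$ is not globally strictly convex: the term $F^\ast \circ L_T^\ast$ is only strictly convex up to the $2$-positive homogeneity it inherits from $\sigma_{\U_r}^2$, the middle term is linear, and the Hilbert norm $\|\cdot\|_X$ is strictly convex only away from rays through the origin. The strategy is to pit these three failure modes against each other, using hypothesis~\eqref{cond-approx} as the final tie-breaker.

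Suppose by contradiction that $\adjtar_1 \neq \adjtar_2$ are both minimisers, with common value $m$. Then the whole segment $\adjtar_\lambda := \lambda \adjtar_1 + (1-\lambda) \adjtar_2$ also minimises $J_\e$. Since the middle term $\lambda \mapsto -\langle \tilde y_T, \adjtar_\lambda\rangle_X$ is affine while the two other summands are convex, the standard observation that the sum of two convex functions equalling an affine function is itself componentwise affine forces both $\lambda \mapsto F^\ast(L_T^\ast \adjtar_\lambda)$ and $\lambda \mapsto \|\adjtar_\lambda\|_X$ to be affine on $[0,1]$. Affineness of the Hilbert norm along a segment is precisely the equality case of the triangle inequality, which forces $\adjtar_1$ and $\adjtar_2$ to be non-negatively collinear; up to relabelling, one may write $\adjtar_1 = t\adjtar_2$ with $\adjtar_2 \neq 0$ and $t \geq 0$, $t \neq 1$. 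Along this ray, $2$-positive homogeneity of $F^\ast \circ L_T^\ast$ gives $F^\ast(L_T^\ast \adjtar_\lambda) = s_\lambda^2 F^\ast(L_T^\ast \adjtar_2)$ with $s_\lambda = \lambda t + (1-\lambda)$ spanning a non-degenerate interval; affineness then forces $F^\ast(L_T^\ast \adjtar_2) = 0$. Hypothesis~\eqref{cond-approx} now yields $\langle \tilde y_T, \adjtar_2\rangle_X \leq 0$, and expanding $J_\e(\adjtar_1) = J_\e(\adjtar_2)$ gives $(t-1)\bigl(\langle \tilde y_T, \adjtar_2\rangle_X - \e\|\adjtar_2\|_X\bigr) = 0$, hence $\e\|\adjtar_2\|_X = \langle \tilde y_T, \adjtar_2\rangle_X \leq 0$, contradicting $\e > 0$ and $\adjtar_2 \neq 0$.

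For the second assertion, observe that $J_\e(0) = 0$, so by uniqueness $\adjtar^\star = 0$ is equivalent to $J_\e \geq 0$ on $X$. If the latter holds, I would test the inequality along the half-line $\lambda \tilde y_T$ with $\lambda \to 0^+$: the $2$-homogeneity of $F^\ast \circ L_T^\ast$ eliminates the quadratic contribution to first order and yields $-\|\tilde y_T\|_X^2 + \e\|\tilde y_T\|_X \geq 0$, i.e.\ $\|\tilde y_T\|_X \leq \e$. Conversely, when $\|\tilde y_T\|_X \leq \e$, the null control is primal-optimal with cost $0$, so strong duality (already established in Subsection~\ref{subsec-strong-duality}) gives $\min J_\e = 0 = J_\e(0)$, and uniqueness forces $\adjtar^\star = 0$.

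I expect the main technical point to be the coupling in the second paragraph between the norm-affineness step and the $2$-homogeneity step: each one alone leaves substantial freedom, and it is only their combination (together with~\eqref{cond-approx}) that closes the argument. The degenerate subcase $t=0$ (one of the two minimisers being zero) is absorbed into the same chain of reasoning, since $s_\lambda = 1-\lambda$ still traces a non-degenerate interval.
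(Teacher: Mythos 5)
Your argument is correct, and it takes a genuinely different route from the paper's. The paper works through the primal--dual optimality system: it first shows that the set $\{L_T u^\star : u^\star \text{ optimal}\}$ is a convex subset of the sphere $\partial\overline{B}(\tilde y_T,\e)$, hence a singleton $\{y^\star\}$ by strict convexity of the ball in $X$; the inclusion $\adjtar^\star\in-\partial\delta_{\overline{B}(\tilde y_T,\e)}(y^\star)$ then confines every dual minimiser to the half-line $\{\lambda(\tilde y_T-y^\star),\ \lambda\geq 0\}$, on which $J_\e$ restricts to a strictly convex quadratic with a unique minimiser. You instead never leave the dual side: constancy of $J_\e$ on the segment of minimisers forces each convex summand to be affine, the norm term yields non-negative collinearity via the equality case of the triangle inequality, the $2$-homogeneity of $F^\ast\circ L_T^\ast$ then forces $F^\ast(L_T^\ast\adjtar_2)=0$, and condition~\eqref{cond-approx} (which the hypothesis of approximate reachability supplies, via Proposition~\ref{reach-implies-cond}) delivers the contradiction. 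Your approach is more self-contained for the uniqueness claim --- it does not need the existence of optimal controls nor the saddle-point conditions~\eqref{saddle-point} --- whereas the paper's route produces as a by-product the identification $\adjtar^\star=\lambda^\star(\tilde y_T-y^\star)$ and the uniqueness of the reached state $y^\star$, which are of independent interest. For the equivalence $\adjtar^\star=0\iff\|\tilde y_T\|_X\leq\e$, your first-order expansion along $\lambda\tilde y_T$ is fine (it uses $\dom(F^\ast)=E$, established in the paper), and the converse direction could even dispense with strong duality: Cauchy--Schwarz gives $J_\e(\adjtar)\geq(\e-\|\tilde y_T\|_X)\|\adjtar\|_X\geq 0$ directly when $\|\tilde y_T\|_X\leq\e$. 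No gaps.
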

\begin{proof}
Let us first prove the uniqueness of $\adjtar^\star$, and the fact that it equals $0$ if and only if $\|\tilde y_T\|_X \leq \e$. The proof is similar to~\cite{Nous}[Proposition 3.10.], but for completeness we provide it in concise form below.

Consider any optimal control $u^\star$. By the second inclusion $\adjtar^\star \in -\partial \delta_{\overline{B}(\tilde y_T, \e)}(L_T u^\star)$ of~\eqref{saddle-point} we find that $L_T u^\star$ lies at the boundary of $\overline{B}(\tilde y_T, \e)$. 

If $\|\tilde y_T\|_X \leq \e$, then as already seen, $0$ is the unique optimal control \ie $\{L_T u^\star,\, u^\star\text{ is optimal}\}=\{0\}$.
If $\|\tilde y_T\|_X > \e$, since the set of minimisers of a convex function is convex, the set $\{L_T u^\star, \, u^\star \text{ is optimal}\}$ is a convex subset of the sphere $S(\tilde y_T, \e)$. The closed ball being strictly convex in the Hilbert space $X$ there exists some $y^\star \in \overline{B}(\tilde y_T, \e)$ with $\|y^\star - \tilde y_T\|_X = \e$ such that \begin{equation}\label{unique-target}
\{L_T u^\star, \, u^\star \text{ is optimal}\} = \{y^\star\}.\end{equation}
Thus, in any case, the set of targets reached by optimal controls is always reduced to a single point, which we can denote $y^\star$ in general.

We now return to the inclusion $\adjtar^\star \in - \partial \delta_{\overline{B}(\tilde y_T, \e)}(L_T u^\star)  =-\partial \delta_{\overline{B}(\tilde y_T, \e)}(y^\star)$.
First assume $\|\tilde y_T\|_X \leq \e$, then  $y^\star = 0$ and then $\adjtar^\star \in -\partial\delta_{\overline{B}(\tilde y_T, \e)}(0)$.
If $\|\tilde y_T\|_{X}<\e$, the latter set reduces to $\{0\}$, leading to $\adjtar^\star = 0$.
Otherwise, $0\in \partial \overline{B} (\tilde y_T, \e)$ and we find
\[\adjtar^\star \in\left\{\lambda \frac{\tilde y_T}{\e}, \lambda \geq 0\right\} = \{\lambda \tilde y_T, \lambda \geq 0\}.\]
Restricting the function $J_{\e}$ defining the dual problem to the above half-line, using the homogeneities of each of its terms, and the fact that $\|\tilde y_T\|_X=\e$, we get
\begin{equation*}\label{non-H-case-2}\gamma_0(\lambda):=J_{\e}(\lambda \tilde y_T)=a_0 \lambda^2, \quad \lambda\geq 0.\end{equation*}
It is clear that $0$ is the unique minimiser of $\gamma_0$. In other words, we have proved $\adjtar^\star = 0$ whenever $\|\tilde y_T\|_X \leq \e$.

Now assume $\|\tilde y_T\|_X > \e$. In this case, note that $\adjtar^\star \neq 0$, since otherwise the dual problem would admit the minimum $0$, hence the primal optimisation problem would also be of minimum $0$. Hence $0$ would be the (unique) optimal control, which would lead to $\|\tilde y_T\|_X \leq \e$, a contradiction. 

To prove the uniqueness of $\adjtar^\star \neq 0$ we argue as follows.
Since $y^\star$ lies at the boundary of $\overline{B}(\tilde y_T, \e)$, we find
\[\adjtar^\star \in\left\{\lambda \left(\frac{\tilde y_T-y^\star}{\e}\right), \lambda \geq 0\right\} = \{\lambda \left(\tilde y_T - y^\star\right), \lambda \geq 0\}.\]
Restricting $J_{\e}$ to the above half-line as previously, we find
\[\gamma(\lambda):=J_{\e}(\lambda (\tilde y_T-y^\star)) = a \lambda ^2 + b \lambda, \quad \lambda \geq 0,\]
where, using $\|\tilde y_T - y^\star\|_X = \e$ and the homogeneities involved $
a= F^\ast(L_T^\ast (\tilde y_T-y^\star))$ and $b=-\langle \tilde y_T, \tilde y_T-y^\star  \rangle_X + \e^2$.
By coercivity, $a >0$, and since $\adjtar^\star \neq 0$, we have $b<0$. 

Thus, $\gamma$ has a unique minimiser $\lambda^\star:=-b/2a>0$. Hence, $\adjtar^\star=\lambda^\star(\tilde y_T-y^\star)$,
and the dual optimal variable is unique.
\end{proof}

%\subsection{Non-convex case}

\paragraph{Acknowledgments.} All three authors acknowledge the support of the ANR project TRECOS, grant number ANR-20-CE40-0009.

\appendix 
\section{Further proofs and results}
\subsection{Elementary results about gauge functions}
We here gather several basic results used throughout the work. We do not claim any originality, but provide them here for completeness and readability. 

Throughout this subsection, we let $H$ be a Hilbert space and $C$ be a non-empty, bounded, closed and convex set containing $0$
\label{app-gauge-results}
\begin{lmm}
\label{basic-gauge}
For any $u \in H$, one has
\begin{itemize}
    \item[(i)] $j_C(u) = 0 \iff u = 0$,
    \item[(ii)] for any $\alpha>j_C(u)$, $u \in \alpha C$,
    \item[(iii)] for any $\alpha>0$, $j_C(u) \leq \alpha \, \iff \, u \in \alpha C$,
    \item[(iv)] $u \in \cone(C) \, \iff \, u \in j_C(u) C$.
\end{itemize}
\end{lmm}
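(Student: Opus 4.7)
My plan is to dispatch the four items in order, since each relies on the previous one and each of the hypotheses on $C$ (non-empty, bounded, closed, convex, containing $0$) plays a distinct role. The key structural observation is that $0 \in C$ together with convexity makes $C$ star-shaped at the origin, so $\alpha C \subset \beta C$ whenever $0 < \alpha \leq \beta$; this monotonicity drives (ii)--(iv), while boundedness is what rules out nontrivial vectors of gauge $0$ in (i), and closedness is what lets us attain the infimum in the forward direction of (iii).

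For (i), the direction $u=0 \Rightarrow j_C(u)=0$ is immediate because $0 \in \alpha C$ for every $\alpha>0$. Conversely, if $j_C(u)=0$, extract a sequence $\alpha_n \downarrow 0$ with $u/\alpha_n \in C \subset \overline{B}(0,R)$, giving $\|u\| \leq R\alpha_n \to 0$. For (ii), fix $\beta$ with $j_C(u) \leq \beta < \alpha$ and $u \in \beta C$; writing
\[
\frac{u}{\alpha} = \frac{\beta}{\alpha}\cdot\frac{u}{\beta} + \Bigl(1-\frac{\beta}{\alpha}\Bigr)\cdot 0
\]
exhibits $u/\alpha$ as a convex combination of elements of $C$, so $u \in \alpha C$. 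For (iii), the direction $u \in \alpha C \Rightarrow j_C(u) \leq \alpha$ is just the definition of infimum. For the converse, if $j_C(u) \leq \alpha$, then (ii) applied with $\alpha + \varepsilon$ yields $u/(\alpha+\varepsilon) \in C$ for every $\varepsilon>0$; letting $\varepsilon \to 0^+$ and using closedness of $C$ gives $u/\alpha \in C$.

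For (iv), both directions essentially reduce to (iii) plus (i). If $u \in \mathrm{cone}(C)$, write $u = \lambda v$ with $\lambda>0$ and $v \in C$, so $u \in \lambda C$ gives $j_C(u) \leq \lambda < +\infty$; when $j_C(u)>0$, (iii) yields $u \in j_C(u)\, C$, and when $j_C(u)=0$ (hence $u=0$ by (i)) the trivial inclusion $0 \in 0\cdot C = \{0\}$ closes the argument. Conversely, if $u \in j_C(u)\, C$ with $j_C(u)>0$, the definition of $\mathrm{cone}(C)$ gives $u \in \mathrm{cone}(C)$ directly; the edge case $j_C(u)=0$ again forces $u=0$, and $0 = \lambda \cdot 0 \in \mathrm{cone}(C)$ since $0 \in C$.

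I do not anticipate any serious obstacle. The only care points are: respecting that (iii) is stated for $\alpha > 0$, so the degenerate case $j_C(u) = 0$ in (iv) must be handled separately via (i); and making sure the closedness of $C$ is actually invoked in (iii) rather than assumed in (ii). The argument uses each of the four hypotheses on $C$ exactly once.
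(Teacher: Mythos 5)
Your proof is correct and follows essentially the same route as the paper's: boundedness of $C$ for (i), the convex combination with $0\in C$ for (ii), an approximation $\alpha_n\downarrow\alpha$ plus closedness for (iii), and a reduction of (iv) to (i) and (iii) with the case $u=0$ treated separately. No gaps.
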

\begin{proof}
    If $j_C(u) = 0$, then we may find $(\e_n)$ with $\e_n>0, \, \e_n \to 0$  and $(c_n) \in C^\N$ such that for all $n$, $u = \e_n c_n$. Hence $c_n = \e_n^{-1} u$, and since $C$ is bounded, this enforces $u=0$. The converse assumption is trivial since $0 \in C$, so (i) is proved.

    Since $\alpha> j_C(u)$, we may find $j_C(u) < \beta \leq \alpha$ such that $u \in \beta C$. Hence there exists $c \in C$ such that $u = \beta c = \alpha(\tfrac{\beta}{\alpha} c) = \alpha(\tfrac{\beta}{\alpha}c + (1-\tfrac{\beta}{\alpha}) 0)$. The vector $\tfrac{\beta}{\alpha}c$ appears as the convex combination of $c \in C$ and $0 \in C$, which shows that $u \in \alpha C$ and proves (ii).

    Let $\alpha>0$ and $u \in H$. If $u \in \alpha C$, then $j_C(u) \leq \alpha$ by definition. Conversely, assume $j_C(u) \leq \alpha$. We may find a sequence $(\alpha_n)$ with $\alpha_n > \alpha$ converging to $\alpha$. Since $\alpha_n > j_C(u)$, (ii) ensures the existence of $c_n \in C$ such that $u = \alpha_n c_n \iff c_n = \alpha_{n}^{-1} u$. Passing to the limit $n \to +\infty$ (since $\alpha>0$), we find that $(c_n)$ converges to $c:= \alpha^{-1} u$. By closedness of $C$, $c \in C$, showing that $u \in \alpha C$ and finishing the proof of~(iii).

    Let $u \in H$. If $u=0$, the equivalence is clear: $0 \in \cone(C)$ by the assumption $0 \in C$, and since $j_C(u)=0$, any $v \in C$ is such that $0 = u = j_C(u) v = 0$. Now assume that $u \neq 0$. By (i), we know that $j_C(u) \neq 0$, hence if $u = j_C(u) v$ with $v \in C$, we have $u \in \cone(C)$. Conversely, if $u \in \cone(C)$, we use (iii) with $\alpha = j_C(u)>0$, showing that $u \in \alpha C = j_C(u) C$, as wanted.%

\end{proof}

\begin{lmm}
\label{closed_generated_cone}
If $0$ is the interior of $C$ relative to the cone it generates, i.e., if
\[\exists \delta>0, \quad  \cone(C) \cap \overline{B}(0,\delta) \subset C,
\]
then $\cone(C)$ is closed.
\end{lmm}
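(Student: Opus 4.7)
My plan is to show directly that $\cone(C)$ contains the limit of any convergent sequence of its elements. So let $(x_n) \subset \cone(C)$ with $x_n \to x$ in $H$; I aim to show $x \in \cone(C)$. The trivial case $x = 0$ is immediate since $0 \in C$ yields $0 \in \cone(C)$. The core of the argument concerns $x \neq 0$.

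The key idea is to combine two-sided bounds on $j_C(x_n)$. For the \emph{upper bound}, I would exploit the hypothesis: for any nonzero $u \in \cone(C)$, the vector $\tfrac{\delta}{\|u\|} u$ still lies in $\cone(C)$ (being a positive scaling), has norm exactly $\delta$, hence lies in $\cone(C) \cap \overline{B}(0,\delta) \subset C$. By Lemma~\ref{basic-gauge}(iii), this gives $j_C\bigl(\tfrac{\delta}{\|u\|} u\bigr) \leq 1$, and positive homogeneity of the gauge then yields $j_C(u) \leq \|u\|/\delta$. For the \emph{lower bound}, fix $R > 0$ with $C \subset \overline{B}(0,R)$; since any representation $u = \lambda v$ with $v \in C$, $\lambda > 0$ forces $\lambda \geq \|u\|/R$, the definition of $j_C$ gives $j_C(u) \geq \|u\|/R$ for $u \in \cone(C)$ (using Lemma~\ref{basic-gauge}(iv) to ensure the infimum defining $j_C$ is attained at some such $\lambda$).

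With these bounds in hand, since $\|x_n\| \to \|x\| > 0$, the sequence $\bigl(j_C(x_n)\bigr)$ is trapped in a compact interval of $(0,+\infty)$ for $n$ large. Extracting a subsequence, I may assume $j_C(x_n) \to \lambda > 0$. By Lemma~\ref{basic-gauge}(iv), write $x_n = j_C(x_n) v_n$ with $v_n \in C$. Then $v_n = x_n / j_C(x_n)$ converges strongly to $x/\lambda$, and closedness of $C$ gives $x/\lambda \in C$. Hence $x = \lambda \cdot (x/\lambda) \in \cone(C)$, concluding the proof.

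The main (minor) obstacle is simply to notice that the hypothesis $\cone(C) \cap \overline{B}(0,\delta) \subset C$, combined with the homogeneity of the gauge, provides a \emph{linear} control $j_C(u) \lesssim \|u\|$ on the whole cone; without this, $j_C(x_n)$ could \textit{a priori} blow up along the sequence and prevent passage to the limit. Once this is spotted, the rest is routine manipulation of the gauge function together with boundedness and closedness of $C$.
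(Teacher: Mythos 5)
Your proof is correct. It follows the same overall skeleton as the paper's argument (write $x_n = j_C(x_n)v_n$ with $v_n \in C$ via Lemma~\ref{basic-gauge}(iv), show the gauges stay in a compact subinterval of $(0,+\infty)$, extract a convergent subsequence and use closedness of $C$), but it handles the one genuinely delicate point --- ruling out blow-up of $j_C(x_n)$ --- by a different and more direct route. The paper argues by contradiction: assuming $\lambda_n := j_C(x_n) \to +\infty$, it forms the rescaled sequence $w_n = \lambda_n^{-1/2}x_n$, which tends to $0$ while $j_C(w_n) = \lambda_n^{1/2} \to +\infty$, so that for large $n$ the point $w_n$ lies in $\cone(C)\cap\overline{B}(0,\delta)$ yet outside $C$, contradicting the hypothesis. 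You instead extract from the hypothesis the quantitative a priori estimate $j_C(u) \leq \|u\|/\delta$ for all $u \in \cone(C)$ (by normalising $u$ to norm $\delta$ and using positive homogeneity of the gauge), paired with the lower bound $j_C(u) \geq \|u\|/R$ coming from boundedness of $C$. This two-sided linear control makes the compactness of $(j_C(x_n))$ immediate, avoids the contradiction argument and the $\lambda_n^{-1/2}$ rescaling trick, and moreover recovers exactly the inequality $j_{\U_r}(u) \leq \delta^{-1}\|u\|_U$ that the paper states separately in the main text as a consequence of~\eqref{regular_generating_set}; in that sense your proof unifies the two observations. One cosmetic remark: for the lower bound you do not need Lemma~\ref{basic-gauge}(iv) to ensure the infimum defining $j_C$ is attained --- it suffices that every admissible $\alpha$ with $u \in \alpha C$ satisfies $\alpha \geq \|u\|/R$, so the infimum inherits the bound.
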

\begin{proof}
Let $(p_n) \in \cone(C)^{\N}$ be a convergent subsequence, of limit $p \in H$.
By Lemma~\ref{basic-gauge}(iv), we may write $p_n = \lambda_n c_n$ with $(c_n) \in C^\N$ and $\lambda_n = j_C(p_n)$. If $p=0$, there is nothing to prove, so we may assume $p \neq 0$. Since $(c_n)$ is bounded by hypothesis, and $(p_n)$ converges to $p \neq 0$, $\liminf \lambda_n > 0$. Hence, if $(\lambda_n)$ is upper bounded, then upon extraction we may write $\lambda_n \to \lambda$ with $\lambda \neq 0$. In this case, we find that $(c_n)$ converges to $c :=\tfrac{p}{\lambda}$. Since $C$ is closed, we have $c \in C$ and hence $p = \lambda c \in \cone(C)$.

To conclude, we only need to show that $(\lambda_n)$ cannot have a diverging subsequence. By contradiction, assume that it is the case: upon extraction, we may assume that $\lambda_n \to +\infty$ as $n \to +\infty$.
Then we may form the sequence $w_n = \lambda_n^{-1/2}p_n = \lambda_n^{1/2} c_n$. This sequence satisfies $w_n \to 0$ as $n \to +\infty$ as well as $(w_n) \in \cone(C)^{\N}$. We also compute $j_C(w_n) = \lambda_n^{-1/2} j_C(p_n) = \lambda_n^{1/2} \to +\infty$ as $n \to +\infty$. Hence for $n$ large enough we have both $w_n\in \cone(C) \cap \overline{B}(0,\delta)$ and $w_n \notin C$, contradicting the assumption that $0$ is the interior of $C$ relative to $\cone(C)$.
\end{proof}

%\subsection{A useful formula}
\begin{lmm}
\label{conj-supp-square}
There hold
$\tfrac{1}{2}\sigma^2_C \in \Gamma_0(H)$, $\tfrac{1}{2} j_C^2 \in \Gamma_0(H)$  and $(\tfrac{1}{2}\sigma^2_C)^\ast = \tfrac{1}{2} j_C^2$.
\end{lmm}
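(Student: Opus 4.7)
I would handle the three assertions separately: first membership in $\Gamma_0(H)$, and then the conjugation formula via matching upper and lower bounds. Throughout, I would rely only on the elementary facts of Lemma~\ref{basic-gauge} and the Fenchel--Young inequality.

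\textbf{Membership and upper bound.} Both $\sigma_C$ and $j_C$ are nonnegative (since $0 \in C$), convex, positively homogeneous, proper (they vanish at $0$), and lower semicontinuous: $\sigma_C$ as a supremum of continuous affine functions, $j_C$ thanks to the closedness of $C$ together with Lemma~\ref{basic-gauge}(iii). Since the map $t \mapsto \tfrac{1}{2}t^2$ is convex and nondecreasing on $[0,+\infty)$, composing it with either function preserves convexity, lower semicontinuity, and properness, so both $\tfrac{1}{2}\sigma_C^2$ and $\tfrac{1}{2}j_C^2$ lie in $\Gamma_0(H)$. For the upper bound, I would first establish the gauge--support inequality $\langle u, p\rangle \leq j_C(u)\,\sigma_C(p)$, with the convention $0 \cdot \infty = 0$: when $j_C(u) < +\infty$, Lemma~\ref{basic-gauge}(iv) provides $v \in C$ with $u = j_C(u) v$, whence $\langle u, p\rangle = j_C(u)\langle v, p\rangle \leq j_C(u)\sigma_C(p)$; the case $j_C(u)=+\infty$ is trivial. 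The scalar Young inequality $ab \leq \tfrac{1}{2}a^2+\tfrac{1}{2}b^2$ applied to $a = j_C(u)$, $b = \sigma_C(p)$ then yields $\langle u, p\rangle - \tfrac{1}{2}\sigma_C^2(p) \leq \tfrac{1}{2}j_C^2(u)$, and taking the supremum over $p \in H$ gives $(\tfrac{1}{2}\sigma_C^2)^\ast(u) \leq \tfrac{1}{2}j_C^2(u)$.

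\textbf{Lower bound.} I would split into three cases. The case $j_C(u) = 0$ reduces to $u=0$ by Lemma~\ref{basic-gauge}(i), and both sides vanish. When $0 < j_C(u) < +\infty$, set $v := u/j_C(u) \in C$; since $j_C(v) = 1$, the point $(1+\e)v$ lies outside the closed convex set $C$ for every $\e>0$, so a Hahn--Banach separation yields $p \in H$ such that $\sigma_C(p) < (1+\e)\langle v, p\rangle$. Because $v \in C$ forces $\langle v, p\rangle \leq \sigma_C(p)$, this inequality automatically entails $\sigma_C(p) > 0$, so after rescaling to $\sigma_C(p)=1$ we obtain $\langle u, p\rangle > j_C(u)/(1+\e)$. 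Testing $\lambda p$ in the conjugate and optimising the scalar function $\lambda \mapsto \lambda \langle u, p\rangle - \tfrac{\lambda^2}{2}\sigma_C^2(p)$ over $\lambda \geq 0$ gives
\[
(\tfrac{1}{2}\sigma_C^2)^\ast(u) \geq \frac{\langle u, p\rangle^2}{2\sigma_C^2(p)} > \frac{j_C(u)^2}{2(1+\e)^2},
\]
and letting $\e \to 0$ yields the desired lower bound. Finally, when $j_C(u) = +\infty$, applying the same separation to the point $u/\alpha \notin C$ for each $\alpha > 0$ produces $p_\alpha$ with $\sigma_C(p_\alpha) \leq 1$ and $\langle u, p_\alpha\rangle > \alpha$, whence $(\tfrac{1}{2}\sigma_C^2)^\ast(u) \geq \alpha - \tfrac{1}{2} \to +\infty = \tfrac{1}{2}j_C^2(u)$.

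\textbf{Main obstacle.} The only technical nuisance is ensuring that the vector $p$ produced by Hahn--Banach can be normalised so that $\sigma_C(p) = 1$, which would fail if $\sigma_C(p) = 0$. This is neatly circumvented in the finite-$j_C$ case by the observation above that $v \in C$ automatically forces $\sigma_C(p) > 0$, and in the infinite-$j_C$ case by rescaling $p_\alpha$ directly. All remaining manipulations are routine one-variable calculus on the quadratic $\lambda \mapsto \lambda \langle u, p\rangle - \tfrac{\lambda^2}{2}\sigma_C^2(p)$.
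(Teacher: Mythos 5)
Your proof is correct, but it takes a genuinely different route from the paper's. The paper computes $(\tfrac{1}{2}\sigma_C^2)^\ast$ in one stroke by invoking the composition formula for conjugates, $(g\circ f)^\ast(x)=\inf_{\alpha\geq 0}\bigl(g^\ast(\alpha)+\alpha f^\ast(x/\alpha)\bigr)$ with $g(t)=\tfrac{1}{2}t^2$ and $f=\sigma_C$, so that $f^\ast=\delta_C$ reduces the infimum to $\inf\{\tfrac{1}{2}\alpha^2 : \alpha>0,\ x\in\alpha C\}=\tfrac{1}{2}j_C^2(x)$ (the boundedness of $C$ being used only to discard $\alpha=0$). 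You instead prove the two inequalities by hand: the upper bound via the gauge--support inequality $\langle u,p\rangle\leq j_C(u)\sigma_C(p)$ plus scalar Young, and the lower bound via Hahn--Banach separation of $(1+\e)u/j_C(u)$ (resp.\ $u/\alpha$) from $C$, with a careful treatment of the normalisation $\sigma_C(p)=1$. Your argument is more elementary and self-contained --- it needs only Lemma~\ref{basic-gauge} and separation, not the (nontrivial) composition-of-conjugates identity --- at the cost of a three-case analysis and roughly triple the length; the paper's argument is slicker but outsources the real work to a quoted formula. Both treatments of membership in $\Gamma_0(H)$ are essentially the same (nonnegative convex lsc composed with $t\mapsto\tfrac{1}{2}t^2$). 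One small point worth making explicit in your write-up: in the case $0<j_C(u)<+\infty$ you use $u/j_C(u)\in C$, which is exactly Lemma~\ref{basic-gauge}(iii) applied with $\alpha=j_C(u)$ and hence relies on the closedness of $C$; this is legitimate but deserves a word.
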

\begin{proof}
Since $0 \in C$, $\sigma_C \geq 0$. If $f \in \Gamma_0(H) \in H, \, f\geq 0$, then $f^2 \in \Gamma_0(H)$. Hence $\tfrac{1}{2}\sigma_C^2 \in \Gamma_0(H)$ and $\tfrac{1}{2} j_C^2 \in \Gamma_0(H)$.

Now let us prove the equality. For $x = 0$, it is readily checked since $j_C(0) = 0$ and $(\tfrac{1}{2}\sigma^2_C)^\ast(0) = -\tfrac{1}{2} \inf_{y \in H} \sigma^2_C(y) = 0$.

For $f \in \Gamma_0(H)$ and  $g \in \Gamma_0(\R)$ non-decreasing, we recall the composition formula
\begin{equation*}
%\label{composition}
(g\circ f)^\ast(x)= \inf_{\alpha \geq 0} \left(g^\ast(\alpha)+\alpha f^\ast\Big(\frac{x}{\alpha}\Big)\right),\end{equation*}
with the convention that for $\alpha = 0$, $0\, f^\ast(\tfrac{y}{0}) = \sigma_{\mathrm{dom}(f)}(y)$. Hence for $x \in H \setminus \{0\}$, one finds
\begin{align*}
\left(\tfrac{1}{2} \sigma_{C}^2\right)^\ast(x) & = \inf_{\alpha \geq 0}\left(\tfrac{1}{2} \alpha^2 + \alpha \delta_{C}\big(\frac{x}{\alpha}\big)\right) 
 = \inf_{\alpha > 0}\left(\tfrac{1}{2} \alpha^2 + \alpha \delta_{C}\big(\frac{x}{\alpha}\big)\right) \\
& = \tfrac{1}{2}\Big(\inf_{\alpha>0, x \in \alpha C}  \alpha\Big)^2 = \tfrac{1}{2} j_{C}^2(x),
\end{align*}
where we discarded $\alpha = 0$ since \[0 \delta_{C}\big(\frac{x}{0}\big)= \sigma_{\mathrm{dom}(\sigma_{C})}(x) = \sigma_H(x) =\delta_{\{0\}}(x) = +\infty,\]
using that $\mathrm{dom}(\sigma_{C}) = H$, by boundedness of $C$.
\end{proof}

\subsection{Proof of Proposition~\ref{suff-cond-extremal}}
\label{proof-avoid-set}
%In order to prove Proposition~\ref{suff-cond-extremal}, we need a few results about sets that cannot "trap" the flow of a given $C_0$ semigroup $(S_t)_{t \geq 0}$ generated by an operator $A : \mathcal{D}(A) \rightarrow H$.

%\begin{prpstn}  
%\label{no-trapping}
%Let $K \subset H$ be any set.   
%Let $y \in C([0, +\infty), H)$ solve $\dot y= A y$, $y(0) = y_0$ with $y_0 \neq 0$.
%    We assume that 
%    \begin{itemize}
%    \item[(i)] the semigroup $(S_t)_{t \geq 0}$ is immediately differentiable and injective,
%    \item[(ii)] there exist $\ell \in \N \cup\{+\infty\}$ and $b \in K^\perp \cap \mathcal{D}((A^\ast)^\ell)$ such that
%    \[\bigcap_{0 \leq j \leq \ell} \{(A^\ast)^j b\}^\perp = \{0\}.\]
%    \end{itemize}
%    Then $\{t\geq 0, \, y(t) \in K\}$ has measure $0$.
%\end{prpstn}

\noindent \textit{Proof}(of Proposition~\ref{suff-cond-extremal}).

Let $\adjtar \neq 0$, and assume by contradiction that the set of interest $\{t \geq 0, \, S_t^\ast \adjtar \in K\}$ has positive measure. We let $p(t) = S_t^\ast \adjtar$ and $I_0 = \{t \geq 0, \, p(t) \in \overline{K}\}$, which by assumption also has positive measure. We have $\langle b, p(t) \rangle = 0$ for all $t \in I_0$ and all $b \in K^\perp$.

The set $I_0$ is closed by virtue of the closedness of $\overline{K}$ and the regularity $p \in C([0,+\infty), X)$. It is a standard fact that since $I_0$ is closed and has positive measure, the set of its limit points $I_1$ is also closed, satisfies $I_1 \subset I_0$ and has the same measure as $I_0$. 
Let $b \in K^\perp \cap \mathcal{D}(A)$ be fixed. We shall now prove that $\langle A b, p(t) \rangle_X = 0$ for all $t \in I_1$.

By definition of $I_1$, for a given $t \in I_1$ one may find a sequence $(t_k)$ of elements of $I_0$, tending towards~$t$ and such that $t_k \neq t$. Since $t \in I_0$ and $t_k \in I_0$, we have $\langle b, p(t_k)\rangle_X = \langle S_{t_k}b, \adjtar \rangle_X = 0 $ as well as $\langle b, p(t)\rangle_X =\langle S_{t}b, \adjtar \rangle_X =  0$. As a result
\[ \left\langle \frac{S_{t_k}b - S_t b}{t_k-t},\adjtar \right\rangle_X= 0.\]
Passing to the limit thanks to the assumption $b \in \mathcal{D}(A)$, we find $0 = \langle S_t A b, \adjtar \rangle_X = \langle A b, p(t)\rangle_X$. Hence for all $t \in I_1$, we both have $p(t) \in \{b\}^\perp$ and $p(t) \in \{A b\}^\perp$. 

Repeating the argument by induction, we define a family of decreasing sets $(I_n)$ that all have the same measure (that of $I_0$). Letting $J := \cap_{n \in \N} I_n$, we have found a set of positive measure such that for all $b \in K^\perp$ and all $0 \leq j \leq \ell(b)$, $p(t) \in \{A^j b\}^\perp$. The second hypothesis (ii) then provides $p(t) = 0$ for $t \in J$. By the injectivity of $(S_t^\ast)$ given by the first hypothesis (i), this leads to $\adjtar = 0$ and we reach a contradiction. \qed

\subsection{A counterexample}
\label{app-counterexample}
For the sake of completeness, we here provide an explicit finite-dimensional example of a situation where the conic constraint set $P$ is closed, but the image cone $L_T P$ is not. 

\begin{prpstn}
Consider the control system
\[\left\{\begin{aligned}
    \dot{y}_1&= y_2, \\
    \dot{y}_2&= u, 
\end{aligned}\right.\]
with conic constraint set given by $P = \{u  \geq 0\}.$
Then for all $T>0$,
\[L_T P = \{(y_1,y_2)\in \R^2, \; 0<y_1\leq Ty_2\} \cup \{(0,0)\}. \]
\end{prpstn}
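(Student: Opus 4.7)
The plan is to apply Duhamel's formula with $A = \left(\begin{smallmatrix}0 & 1\\ 0 & 0\end{smallmatrix}\right)$ and $B = (0,1)^\top$. Since $A$ is nilpotent, $e^{tA} = I + tA$, so $e^{(T-t)A}B = (T-t,\,1)^\top$ and
$$L_T u \;=\; \Big(\int_0^T (T-t)\,u(t)\,dt,\;\int_0^T u(t)\,dt\Big) \;\in\; \R^2.$$
The question thus reduces to characterising which pairs $(y_1,y_2)$ arise as these two moments of a nonnegative function $u \in L^2(0,T)$.

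The first step is the inclusion $L_TP \subseteq \{0 < y_1 \le Ty_2\} \cup \{(0,0)\}$. Both kernels $T-t$ and $1$ are nonnegative on $[0,T]$, so $u \ge 0$ forces $y_1, y_2 \ge 0$. The identity
$$Ty_2 - y_1 \;=\; \int_0^T t\,u(t)\,dt \;\ge\; 0$$
gives $y_1 \le Ty_2$. Finally, if $y_1 = 0$ then $(T-t)u(t) = 0$ a.e., so $u \equiv 0$ on $[0,T)$, hence $u \equiv 0$ a.e. and $(y_1,y_2) = (0,0)$. This completes the easy direction.

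For the reverse inclusion, the strategy is to exhibit explicit nonnegative step controls sweeping out every interior target. Using $u = a\,\chi_{[0,\alpha]}$ yields $(y_1,y_2) = (a\alpha(T-\alpha/2),\,a\alpha)$, whose slope $y_1/y_2 = T - \alpha/2$ sweeps $[T/2,\,T)$ as $\alpha$ varies in $(0,T]$; symmetrically, $u = a\,\chi_{[\beta,T]}$ sweeps slopes in $(0,T/2]$ as $\beta$ varies in $[0,T)$. Together these cover every slope in $(0,T)$, and scaling $a > 0$ then realises any $(y_1,y_2)$ with $y_2 > 0$ and $0 < y_1 < Ty_2$. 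The origin is attained trivially by $u \equiv 0$.

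The main obstacle — and the very point of the counterexample — is the boundary ray $\{y_1 = Ty_2,\,y_2 > 0\}$. Here the identity $Ty_2 - y_1 = \int_0^T t\,u(t)\,dt = 0$ combined with $u \ge 0$ forces $t\,u(t) = 0$ a.e., hence $u \equiv 0$ and $y_2 = 0$, a contradiction. Thus this ray actually lies in $\overline{L_TP} \setminus L_TP$, which is precisely what demonstrates that $L_TP$ fails to be closed even though $P$ is. Consequently the displayed equality should be interpreted with the subtlety that the boundary slope $y_1 = Ty_2$ is realised only in the closure: the strict form $\{0 < y_1 < Ty_2\} \cup \{(0,0)\}$ is exactly $L_TP$, while $\{0 < y_1 \le Ty_2\} \cup \{(0,0)\}$ corresponds to its closure augmented by the origin, both of which serve equally well to exhibit the non-closedness.
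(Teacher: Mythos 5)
Your proof is correct, and in the reverse inclusion it is actually more robust than the paper's. Both arguments rest on the same two moments of $u$: you compute the kernel $e^{(T-t)A}B=(T-t,1)^\top$ explicitly and work with $\int_0^T(T-t)u$ and $\int_0^T u$, while the paper reasons at the trajectory level, using that the second component $y_2(\cdot)$ is nondecreasing with $y_2(0)=0$ and that the target's first coordinate is $\int_0^T y_2(t)\,dt$ --- these are the same facts in different clothing. Where you genuinely diverge is the converse inclusion: the paper builds a single piecewise-linear candidate $w$ for $y_2(\cdot)$ with break point $t_1=2\bigl(T(\tar)_2-(\tar)_1\bigr)/(\tar)_2$, and this $t_1$ only lies in $[0,T]$ when $(\tar)_1\ge T(\tar)_2/2$ (and degenerates as $(\tar)_1\to T(\tar)_2$, where $u=\dot w$ would have to be a Dirac mass); your two families of step controls $a\chi_{[0,\alpha]}$ and $a\chi_{[\beta,T]}$ sweep all slopes in $(0,T)$ without this restriction.

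More importantly, your remark about the boundary ray is not merely an ``interpretation'': it identifies an error in the statement. Since $Ty_2-y_1=\int_0^T t\,u(t)\,dt$ and $u\ge 0$, the equality $y_1=Ty_2$ forces $u=0$ a.e.\ and hence $y_2=0$, so the ray $\{y_1=Ty_2,\ y_2>0\}$ is \emph{not} attained; the correct identity is $L_TP=\{(y_1,y_2)\in\R^2,\ 0<y_1<Ty_2\}\cup\{(0,0)\}$ with strict inequality. The paper's forward argument only records $(\tar)_1\le T(\tar)_2$ and does not notice that equality in $\int_0^T y_2(t)\,dt\le T y_2(T)$ would force $y_2(\cdot)$ to be constant, contradicting $y_2(0)=0$ when $(\tar)_2>0$; its converse construction fails precisely on that ray. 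Your corrected set should be preferred, and, as you observe, the intended conclusion (non-closedness of $L_TP$, with $(0,a)$, $a>0$, approximately but not exactly reachable) is unaffected.
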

The cone $L_T P$ is not closed, whatever the value of $T>0$. For instance, any target of the form $\tar = (0,a)$ with $a>0$ is approximately but not exactly reachable under the constraints $P$ from $(0,0)$ in time $T>0$.
\begin{proof}

We first prove that $L_T P$ is included in the announced set. Let $\tar  = ((\tar)_1, (\tar)_2) \in L_T P$. 
Then, there exists $u \in L^2(0,T)$, $u \geq 0$ such that for all $0 \leq t \leq T$
\[y_1(t)=\int_0^t y_2(t) \, dt, \quad y_2(t)=\int_0^t u(t) \, dt.\] Given that $u \geq 0$, is clear that $y_1 \geq 0$ and $y_2 \geq 0$ at all times, and in particular $(\tar)_1 \geq 0, \; (\tar)_2 \geq 0$.

First assume that $(\tar)_1 = 0$, then we would find $y_2 = 0$ on $(0,T)$, and in particular $(\tar)_2 = 0$. In this case, we find $\tar =(0,0)$ (obtained only with the null control).

Now let us assume that $(\tar)_1 >0$; all is left to prove is the inequality $(\tar)_1 \leq T (\tar)_2$. Remark that the function $y_2$ satisfies $y_2(T) = (\tar)_2, \; \textstyle \int_0^T y_2(t) \, dt = (\tar)_1$, and is nondecreasing. In particular, we have $y_2(t) \leq (\tar)_2$ for all $0 \leq t \le T$. Hence
$(\tar)_1 = \textstyle \int_0^T y_2(t) \,dt \leq T y_2(T) = T (\tar)_2$.

Conversely, let $\tar  = ((\tar)_1, (\tar)_2) \in \{(y_1,y_2)\in \R^2, \; 0<y_1\leq Ty_2\} \cup \{(0,0)\}$. If $\tar = 0$, there is nothing to prove. The interesting case is again $0<(\tar)_1 \leq T (\tar)_2$. Let us build a $H^1(0,T)$ nondecreasing function $w$ (which will correspond to $y_2$) satisfying $w(0) = 0$, $w(T) = (\tar)_2$ and $\textstyle \int_0^T w(t) \, dt = (\tar)_1$. We may for instance take $w$ defined by $w(t) = \tfrac{(\tar)_2}{t_1} t$ if $t \leq t_1$ and $w(t) = (\tar)_2$ for $t_1 <t \leq T$, where $t_1$ is adjusted so that $\textstyle  \int_0^T w(t) \, dt = (\tar)_1$, \ie $t_1 = 2 \tfrac{(\tar)_2 -T (\tar)_1}{(\tar)_2}$, which does satisfy $0 \leq t_1 \leq T$ thanks to the assumption $(\tar)_1 \leq T (\tar)_2$. 

The chosen function is in $H^1(0,T)$ and we may hence set $u = \dot w \in L^2(0,T)$, which is a nonnegative function steering $(0,0)$ to $\tar$ since the corresponding trajectory $(y_1, y_2)$ satisfies $y_2(T) =\textstyle  \int_0^T u(t)\,dt = w(T) = (\tar)_2$ and $y_1(T) =\textstyle  \int_0^T y_2(t)\,dt = \int_0^T w(t) \,dt = (\tar)_1$.

\end{proof}

\bibliographystyle{plain}
\bibliography{bib.bib}

\begin{thebibliography}{10}

\bibitem{ahmed_finite-time_1985}
N.~U. Ahmed.
\newblock Finite-time null controllability for a class of linear evolution
  equations on a {Banach} space with control constraints.
\newblock {\em Journal of optimization Theory and Applications},
  47(2):129--158, 1985.
\newblock Publisher: Springer.

\bibitem{argyriou2012sparse}
Andreas Argyriou, Rina Foygel, and Nathan Srebro.
\newblock Sparse prediction with the $ k $-support norm.
\newblock {\em Advances in Neural Information Processing Systems}, 25, 2012.

\bibitem{bauschke-combettes}
HH~Bauschke and PL~Combettes.
\newblock Convex analysis and monotone operator theory in hilbert spaces, 2011.
\newblock {\em CMS books in mathematics}, 10:978--1.

\bibitem{Berrahmoune2014}
Larbi Berrahmoune.
\newblock A variational approach to constrained controllability for distributed
  systems.
\newblock {\em Journal of Mathematical Analysis and Applications},
  416(2):805--823, 2014.

\bibitem{Berrahmoune2019}
Larbi Berrahmoune.
\newblock Constrained null controllability for distributed systems and
  applications to hyperbolic-like equations.
\newblock {\em ESAIM: Control, Optimisation and Calculus of Variations}, 25:32,
  2019.

\bibitem{berrahmoune_variational_2020}
Larbi Berrahmoune.
\newblock A variational approach to constrained null controllability for the
  heat equation.
\newblock {\em European Journal of Control}, 52:42 -- 48, 2020.

\bibitem{Boyer2013}
Franck Boyer.
\newblock On the penalised {HUM} approach and its applications to the numerical
  approximation of null-controls for parabolic problems.
\newblock {\em ESAIM: Proc.}, 41:15--58, 2013.

\bibitem{brammer1972controllability}
Robert~F Brammer.
\newblock Controllability in linear autonomous systems with positive
  controllers.
\newblock {\em SIAM Journal on Control}, 10(2):339--353, 1972.

\bibitem{CoronBook}
J.-M. Coron.
\newblock {\em Control and nonlinearity}, volume 136 of {\em Mathematical
  Surveys and Monographs}.
\newblock American Mathematical Society, Providence, RI, 2007.

\bibitem{Ervedoza_2020}
Sylvain Ervedoza.
\newblock Control issues and linear projection constraints on the control and
  on the controlled trajectory.
\newblock {\em North-W. Eur. J. of Math.}, 6:165--197, 2020.

\bibitem{fattorini1966complete}
Hector~O Fattorini.
\newblock Some remarks on complete controllability.
\newblock {\em SIAM Journal on Control}, 4(4):686--694, 1966.

\bibitem{hiriart-poly-norms}
Manlio Gaudioso and Jean-Baptiste Hiriart-Urruty.
\newblock Deforming $\|\cdot\|_1$ into $\|\cdot\|_\infty$ via polyhedral norms:
  A pedestrian approach.
\newblock {\em SIAM Review}, 64(3):713--727, 2022.

\bibitem{gugat2008norm}
Martin Gugat and Gunter Leugering.
\newblock $l^\infty$-norm minimal control of the wave equation: on the weakness
  of the bang-bang principle.
\newblock {\em ESAIM: Control, Optimisation and Calculus of Variations},
  14(2):254--283, 2008.

\bibitem{hautus1970stabilization}
MLJ Hautus.
\newblock Stabilization controllability and observability of linear autonomous
  systems.
\newblock In {\em Indagationes mathematicae (proceedings)}, volume~73, pages
  448--455. North-Holland, 1970.

\bibitem{kalman1960}
Rudolf~Emil Kalman et~al.
\newblock Contributions to the theory of optimal control.
\newblock {\em Bol. soc. mat. mexicana}, 5(2):102--119, 1960.

\bibitem{Kunisch-Wang-2013}
Karl Kunisch and Lijuan Wang.
\newblock Time optimal control of the heat equation with pointwise control
  constraints.
\newblock {\em ESAIM: Control, Optimisation and Calculus of Variations},
  19(2):460--485, 2013.

\bibitem{lions-1988}
Jacques-Louis Lions.
\newblock Exact controllability, stabilization and perturbations for
  distributed systems.
\newblock {\em SIAM review}, 30(1):1--68, 1988.

\bibitem{Lions-1992}
Jacques-Louis Lions.
\newblock Remarks on approximate controllability.
\newblock {\em Journal d’Analyse Math{\'e}matique}, 59(1):103, 1992.

\bibitem{Loheac2017}
J{\'e}r{\^o}me Loh{\'e}ac, Emmanuel Tr{\'e}lat, and Enrique Zuazua.
\newblock Minimal controllability time for the heat equation under unilateral
  state or control constraints.
\newblock {\em Mathematical Models and Methods in Applied Sciences},
  27(09):1587--1644, 2017.

\bibitem{Loheac2021}
J{\'e}r{\^o}me Loh{\'e}ac, Emmanuel Tr{\'e}lat, and Enrique Zuazua.
\newblock Nonnegative control of finite-dimensional linear systems.
\newblock {\em Annales de l'Institut Henri Poincar{\'e} C, Analyse non
  lin{\'e}aire}, 38(2):301--346, 2021.

\bibitem{Nous}
Camille Pouchol, Emmanuel Tr{\'e}lat, and Christophe Zhang.
\newblock Approximate control of parabolic equations with on-off shape controls
  by fenchel duality.
\newblock {\em To appear in Annales de l'Institut Henri Poincaré C, Analyse
  non linéaire}, 2024.

\bibitem{Rockafellar1967}
Ralph Rockafellar.
\newblock Duality and stability in extremum problems involving convex
  functions.
\newblock {\em Pacific Journal of Mathematics}, 21(1):167--187, 1967.

\bibitem{Rudin}
W.~Rudin.
\newblock {\em Functional Analysis}.
\newblock International series in pure and applied mathematics. McGraw-Hill,
  1991.

\bibitem{tucsnak2009observation}
Marius Tucsnak and George Weiss.
\newblock {\em Observation and control for operator semigroups}.
\newblock Springer Science \& Business Media, 2009.

\bibitem{zuazua2010switching}
Enrique Zuazua.
\newblock Switching control.
\newblock {\em Journal of the European Mathematical Society}, 13(1):85--117,
  2010.

\end{thebibliography}
\end{document}